\newtheorem{theorem}{Theorem}
\newtheorem{acknowledgement}[theorem]{Acknowledgement}
\newtheorem{claim}[theorem]{Claim}
\newtheorem{corollary}[theorem]{Corollary}
\newtheorem{definition}[theorem]{Definition}
\newtheorem{lemma}[theorem]{Lemma}
\newtheorem{problem}[theorem]{Problem}
\newtheorem{proposition}[theorem]{Proposition}
\newenvironment{proof}[1][Proof]{\noindent\textbf{#1.} }{\ \rule{0.5em}{0.5em}}
\begin{document}

\date{}
\title{There is a $+$-Ramsey \textsf{MAD} family}
\author{Osvaldo Guzm\'{a}n \thanks{This research forms part of the the
author Ph.D thesis, which was supported by CONACyT scholarship 420090{}.
\textit{keywords: }MAD families, $+$-Ramsey MAD families, destructibility,
Miller forcing, cardinal invariants. \textit{AMS Classification: }03E17, 03E05
, 03E20}}
\maketitle

\begin{abstract}
We answer an old question of Michael Hru\v{s}\'{a}k by constructing a
$+$-Ramsey \textsf{MAD} family without the need of any additional axioms
beyond $\mathsf{ZFC.}$ We also prove that every Miller-indestructible
\textsf{MAD }family is $+$-Ramsey, this improves a result of Michael Hru\v{s}\'{a}k.

\end{abstract}

\section*{Introduction}

A family $\mathcal{A}\subseteq\left[  \omega\right]  ^{\omega}$ is
\emph{almost disjoint (AD) }if the intersection of any two different elements
of $\mathcal{A}$ is finite, a \textsf{MAD }\emph{family}\textsf{\emph{ }}is
a\textsf{ }maximal almost disjoint family. Almost disjoint families and
\textsf{MAD} families have become very important in set theory, topology and
functional analysis (see \cite{AlmostDisjointFamiliesandTopology}). It is very
easy to prove that the Axiom of Choice implies the existence of \textsf{MAD}
families. However, constructing \textsf{MAD} families with special
combinatorial or topological properties is a very difficult task without the
an additional hypothesis beyond \textsf{ZFC}. Constructing models of set
theory for which certain kinds of \textsf{MAD} families do not exist is very
difficult. We would like to mention some important examples regarding the
existence or non-existence of special \textsf{MAD} families:

\begin{enumerate}
\item (Simon \cite{ACompactFrechetSpacewhoseSquareisnotFrechet}) There is a
\textsf{MAD} family which can be partitioned into two nowhere \textsf{MAD} families.

\item (Mr\'{o}wka \cite{Mrowka}) There is a \textsf{MAD} family for which its
$\Psi$-space has a unique compactification.

\item (Raghavan \cite{ThereisavanDouwenMADfamily}) There is a van Douwen
\textsf{MAD} family.

\item (Raghavan \cite{AModelwithnoStronglySeparableAlmostDisjointFamilies})
There is a model with no strongly separable \textsf{MAD} families.
\end{enumerate}

\qquad\ \ \ \qquad\ \ \qquad\ \ \ \ \ \ 

If $\mathcal{I}$ is an ideal on $\omega$ then by $\mathcal{I}^{+}$ we denote
the set$\wp\left(  \omega\right)  $ $\backslash\mathcal{I}$ and its elements
are called $\mathcal{I}$-positive sets. If $\mathcal{A}$ is an \textsf{AD}
family by $\mathcal{I}\left(  \mathcal{A}\right)  $ we denote the ideal
generated by $\mathcal{A}.$ In \cite{HappyFamilies} Adrian Mathias proved that
if $\mathcal{A}$ is a \textsf{MAD} family then $\mathcal{I}\left(
\mathcal{A}\right)  ^{+}$ is a happy family, which is a kind of Ramsey-like
property. In \cite{SelectivityofAlmostDisjointFamilies} Michael Hru\v{s}\'{a}k
introduced a stronger Ramsey property:

\begin{definition}
\qquad\ \ 

\begin{enumerate}
\item By $\mathcal{A}^{\perp}$ we denote the set of all $X\subseteq\omega$
such that $\mathcal{A}\cup\left\{  X\right\}  $ is almost disjoint.

\item If $\mathcal{A}$ is an \textsf{AD }family, by $\mathcal{I}\left(
\mathcal{A}\right)  ^{++}$ we denote the set of all $X\subseteq\omega$ such
that there is $\mathcal{B}\in\left[  \mathcal{A}\right]  ^{\omega}$ such that
$\left\vert X\cap B\right\vert =\omega$ for every $B\in\mathcal{B}.$

\item Let $\mathcal{X}\subseteq\left[  \omega\right]  ^{\omega},$ we say a
tree $T\subseteq\omega^{<\omega}$ is a $\mathcal{X}$\emph{-branching tree} if
$suc_{T}\left(  s\right)  \in\mathcal{X}$ for every $s\in T$ (where
$suc_{T}\left(  s\right)  =\left\{  n\in\omega\mid s^{\frown}\left\langle
n\right\rangle \in T\right\}  $).

\item An \textsf{AD} family $\mathcal{A}$ is $+$\emph{-Ramsey} if for every
$\mathcal{I}\left(  \mathcal{A}\right)  ^{+}$-branching tree $T,$ there is
$f\in\left[  T\right]  $ such that $im\left(  f\right)  \in\mathcal{I}\left(
\mathcal{A}\right)  ^{+}.$
\end{enumerate}
\end{definition}

\qquad\ \ \ \qquad\ \ \qquad\ \ 

In \cite{SelectivityofAlmostDisjointFamilies} it is proved that there is a
\textsf{MAD} family that is not $+$-Ramsey. On the other hand, $+$-Ramsey
\textsf{MAD} families can be contructed under $\mathfrak{b=c},$ \textsf{cov}%
$\left(  \mathcal{M}\right)  =\mathfrak{c,}$ $\mathfrak{a}<$ \textsf{cov}%
$\left(  \mathcal{M}\right)  $ or $\Diamond\left(  \mathfrak{b}\right)  $ (see
\cite{SelectivityofAlmostDisjointFamilies} and
\cite{OrderingMADfamiliesalaKatetov}). Michael Hru\v{s}\'{a}k asked the
following,\qquad\ \ 

\begin{problem}
[Hru\v{s}\'{a}k \cite{SelectivityofAlmostDisjointFamilies}]Is there a
$+$-Ramsey \textsf{MAD} family in \textsf{ZFC}?
\end{problem}

\qquad\ \ \ 

In this note we will provide a positive answer to this question. In
\cite{SANEPlayer} (see also \cite{AlmostDisjointFamiliesandTopology} and
\cite{SplittingFamiliesandCompleteSeparability}) Saharon Shelah developed a
novel and powerfull method to construct \textsf{MAD} families. He used it to
prove that there is a completely separable \textsf{MAD} family if
$\mathfrak{s\leq a}$ or $\mathfrak{a<s}$ and a certain \textsf{PCF}-hypothesis
holds. Our technique for constructing a $+$-Ramsey \textsf{MAD} is based on
the technique of Shelah (however, in this case we were able to avoid the
\textsf{PCF}-hypothesis). It is worth mentioning that the method of Shelah has
been further developed in \cite{OnWeaklytightFamilies} and
\cite{SplittingFamiliesandCompleteSeparability} where it is proved that weakly
tight \textsf{MAD} families exist under $\mathfrak{s\leq b.}$ Our notation is
mostly standard, the definition and basic properties of the cardinal
invariants of the continuum used in this note can be found in
\cite{HandbookBlass}.

\section*{Preliminaries}

A \textsf{MAD} family $\mathcal{A}$ is \emph{completely separable }if for
every $X\in\mathcal{I}\left(  \mathcal{A}\right)  ^{+}$ there is
$A\in\mathcal{A}$ such that $A\subseteq X.$ This type of \textsf{MAD} families
was introduced by Hechler in \cite{Hechler}. A year later, Shelah and
Erd\"{o}s asked the following question:

\begin{problem}
[Erd\"{o}s-Shelah]Is there a completely separable \textsf{MAD} family?
\end{problem}

\qquad\ \ \ \qquad\ \ \ 

It is easy to construct models where the previous question has a positive
answer. It was shown by Balcar and Simon (see \cite{DisjointRefinement}) that
such families exist assuming one of the following axioms: $\mathfrak{a=c},$
$\mathfrak{b=d},$ $\mathfrak{d\leq a}$ and $\mathfrak{s}=\omega_{1}.$ In
\cite{SANEPlayer} (see also \cite{AlmostDisjointFamiliesandTopology} and
\cite{SplittingFamiliesandCompleteSeparability}) Shelah developed a novel and
powerful method to construct completely separable \textsf{MAD} families. He
used it to prove that there are such families if either $\mathfrak{s\leq a}$
or $\mathfrak{a<s}$ and a certain (so called) \textsf{PCF }hypothesis holds
(which holds for example, if the continuum is less than $\aleph_{\omega}$).
Since the construction of Shelah of a completely separable \textsf{MAD} family
under $\mathfrak{s\leq a}$ is key for our construction of a $+$-Ramsey
\textsf{MAD} family, we will recall it on this section. This exposition is
based on \cite{SplittingFamiliesandCompleteSeparability} and
\cite{AlmostDisjointFamiliesandTopology}.

\begin{definition}
\qquad\ \ \qquad\ \ 

\begin{enumerate}
\item We say that $S$ \emph{splits }$X$ if $S\cap X$ and $X\setminus S$ are
both infinite.

\item $\mathcal{S\subseteq}$ $\left[  \omega\right]  ^{\omega}$ is a
\emph{splitting family }if for every $X\in\left[  \omega\right]  ^{\omega}$
there is $S\in\mathcal{S}$ such that $S$ splits $X.$

\item Let $S\in\left[  \omega\right]  ^{\omega}$ and $\mathcal{P}=\left\{
P_{n}\mid n\in\omega\right\}  $ be an interval partition. We say $S$
\emph{block-splits }$\mathcal{P}$ if both of the sets $\left\{  n\mid
P_{n}\subseteq S\right\}  $ and $\left\{  n\mid P_{n}\cap S=\emptyset\right\}
$ are infinite.

\item A family $\mathcal{S}\subseteq\left[  \omega\right]  ^{\omega}$ is
called a \emph{block-splitting family }if every interval partition is
block-split by some element of $\mathcal{S}.$
\end{enumerate}
\end{definition}

\qquad\ \ \ 

Recall that the \emph{splitting number }$\mathfrak{s}$ is the smallest size of
a splitting family. It is well known that $\mathfrak{s}$ has uncountable
cofinality, it is below the dominating number $\mathfrak{d}$ and independent
from the unbounding number $\mathfrak{b}$ (see\cite{HandbookBlass}). Regarding
the smallest size of a block splitting family we have the following result of
Kamburelis and Weglorz:

\begin{proposition}
[\cite{Splittings}]The smallest size of a block-splitting family is
$max\left\{  \mathfrak{b},\mathfrak{s}\right\}  .$
\end{proposition}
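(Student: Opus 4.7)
First I would separate the claim into two inequalities. For the lower bound $\geq \mathfrak{s}$, I would observe that every block-splitting family is automatically splitting: given $X = \{x_{0}<x_{1}<\cdots\}\in[\omega]^{\omega}$, the interval partition $P_{n}=[x_{n},x_{n+1})$ has the property that any $S$ block-splitting it must make both $\{n:x_{n}\in S\}$ and $\{n:x_{n}\notin S\}$ infinite, so $S$ splits $X$; hence a block-splitting family of size $<\mathfrak{s}$ cannot exist.

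For the lower bound $\geq \mathfrak{b}$, I would argue by contradiction: suppose $\mathcal{S}$ is block-splitting with $|\mathcal{S}|<\mathfrak{b}$. For each $S\in\mathcal{S}$ (we may assume $S,\omega\setminus S$ are both infinite), set $f_{S}(n)$ to be the least $m>n$ such that $[n,m]$ meets both $S$ and $\omega\setminus S$. Pick $g\in\omega^{\omega}$ with $f_{S}\leq^{\ast}g$ for every $S\in\mathcal{S}$, and recursively construct an interval partition $\{P_{n}=[a_{n},a_{n+1})\}$ with $a_{n+1}>g(a_{n})$. Then for each fixed $S$, cofinitely many $P_{n}$ contain both an element of $S$ and one of $\omega\setminus S$, so $S$ fails to block-split the partition---contradiction.

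For the upper bound, I would fix a $\leq^{\ast}$-unbounded family $\{f_{\beta}:\beta<\mathfrak{b}\}$ of strictly increasing functions and a splitting family $\{S_{\alpha}:\alpha<\mathfrak{s}\}$, and form
\[
X_{\alpha,\beta}=\bigcup_{n\in S_{\alpha}}[f_{\beta}(n),f_{\beta}(n+1)).
\]
To check this family is block-splitting, fix an arbitrary interval partition $\mathcal{P}=\{P_{k}=[a_{k},a_{k+1})\}$ and define $g_{\mathcal{P}}(m)=a_{k+1}$ where $k$ is least with $a_{k}\geq m$. I would then select $\beta$ for which $f_{\beta}(n+1)\geq g_{\mathcal{P}}(f_{\beta}(n))$ holds at infinitely many $n$; for each such $n$, the interval $[f_{\beta}(n),f_{\beta}(n+1))$ contains some whole block $P_{k(n)}$, and the $k(n)$ are distinct since the intervals $[f_{\beta}(n),f_{\beta}(n+1))$ are pairwise disjoint. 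Writing $M$ for the set of good $n$, I would use the splitting family to pick $\alpha$ with $S_{\alpha}$ splitting $M$; then $P_{k(n)}\subseteq X_{\alpha,\beta}$ for $n\in S_{\alpha}\cap M$ and $P_{k(n)}\cap X_{\alpha,\beta}=\emptyset$ for $n\in M\setminus S_{\alpha}$, establishing the block-splitting property.

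The main obstacle will be extracting the required $\beta$ above. Applying unboundedness directly to $g_{\mathcal{P}}$ only produces scattered points where $f_{\beta}$ exceeds $g_{\mathcal{P}}$, which does not force any block of $\mathcal{P}$ to sit inside a single $[f_{\beta}(n),f_{\beta}(n+1))$. The fix is to note that if the recursion $f_{\beta}(n+1)\geq g_{\mathcal{P}}(f_{\beta}(n))$ failed cofinitely for every $\beta$, then iterating the inequality would give $f_{\beta}(n)\leq g_{\mathcal{P}}^{\,n-N_{\beta}}(f_{\beta}(N_{\beta}))\leq\widetilde{g}(n)$ for large $n$, where $\widetilde{g}(m):=g_{\mathcal{P}}^{\,m}(m)$ is a single function; applying unboundedness against this iterated $\widetilde{g}$ then yields the desired $\beta$.
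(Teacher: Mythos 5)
The paper states this proposition only as a cited result of Kamburelis and W\k{e}glorz and supplies no proof, so there is no internal argument to compare yours against; judged on its own, your proof is correct and complete. The two lower bounds are fine: the reduction to splitting via the partition determined by the increasing enumeration of $X$ works (modulo the cosmetic fix that the first block should be $[0,x_{1})$ so that the $P_{n}$ really partition $\omega$), and the $\geq\mathfrak{b}$ argument correctly discards the trivially useless $S$ with $S$ or $\omega\setminus S$ finite and then dominates the functions $f_{S}$. For the upper bound, the family $\{X_{\alpha,\beta}\}$ has size $\mathfrak{b}\cdot\mathfrak{s}=\max\{\mathfrak{b},\mathfrak{s}\}$, and you correctly identified the only delicate point, namely producing $\beta$ with $f_{\beta}(n+1)\geq g_{\mathcal{P}}(f_{\beta}(n))$ infinitely often. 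Your iteration fix is sound because $g_{\mathcal{P}}$ is nondecreasing and satisfies $g_{\mathcal{P}}(m)>m$, so a cofinal failure of the recursion really does give $f_{\beta}(n)\leq g_{\mathcal{P}}^{\,n}(n)$ for large $n$, and unboundedness against $\widetilde{g}$ does the rest; the remaining bookkeeping (disjointness of the $[f_{\beta}(n),f_{\beta}(n+1))$ forcing the $k(n)$ to be distinct, and $X_{\alpha,\beta}$ meeting $[f_{\beta}(n),f_{\beta}(n+1))$ exactly when $n\in S_{\alpha}$) is all in order. This is essentially the standard proof of the Kamburelis--W\k{e}glorz theorem.
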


\qquad\ \ 

Some other notions of splitting are the following:

\begin{definition}
Let $S\in\left[  \omega\right]  ^{\omega}$ and $\overline{X}=\left\{
X_{n}\mid n\in\omega\right\}  \subseteq\left[  \omega\right]  ^{\omega}$.

\begin{enumerate}
\item We say that $S$ $\omega$\emph{-splits} $\overline{X}$ if $S$ splits
every $X_{n}.$

\item We say that $S$ $\left(  \omega,\omega\right)  $\emph{-splits}
$\overline{X}$ if both the sets $\left\{  n\mid\left\vert X_{n}\cap
S\right\vert =\omega\right\}  $ and $\left\{  n\mid\left\vert X_{n}\cap\left(
\omega\backslash S\right)  \right\vert =\omega\right\}  $ are infinite.

\item We say that $\mathcal{S}\subseteq\left[  \omega\right]  ^{\omega}$ is an
$\omega$\emph{-splitting family }if every countable collection of infinite
subsets of $\omega$ is $\omega$-split by some element of $S.$

\item We say that $\mathcal{S}\subseteq\left[  \omega\right]  ^{\omega}$ is an
$\left(  \omega,\omega\right)  $\emph{-splitting family }if every countable
collection of infinite subsets of $\omega$ is $\left(  \omega,\omega\right)
$-split by some element of $S.$
\end{enumerate}
\end{definition}

\qquad\ \ \qquad\ \ 

It is easy to see that every block splitting family is an $\omega$-splitting
family. The following is a fundamental result of Mildenberger, Raghavan and Stepr\={a}ns:

\begin{proposition}
[\cite{SplittingFamiliesandCompleteSeparability}]There is an $\left(
\omega,\omega\right)  $-splitting family of size $\mathfrak{s}.$
\end{proposition}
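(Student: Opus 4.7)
The plan is to show that any splitting family $\mathcal{S}$ of minimum size $\mathfrak{s}$, once closed under finite Boolean combinations (which preserves cardinality since $\mathfrak{s}$ is uncountable), is already $(\omega,\omega)$-splitting. Given such $\mathcal{S}$ and a countable family $\overline{X}=\{X_{n}:n\in\omega\}$ of infinite subsets of $\omega$, the goal is to exhibit a single $S\in\mathcal{S}$ for which both index sets $\{n:|X_{n}\cap S|=\omega\}$ and $\{n:|X_{n}\cap(\omega\setminus S)|=\omega\}$ are infinite.

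The first step is a Ramsey-style thinning. By passing to an infinite subsequence of $\overline{X}$ I may assume one of two alternatives holds: either (a) some infinite subfamily $\{X_{n}:n\in I\}$ admits an infinite pseudo-intersection $Y$, or (b) $\overline{X}$ is, up to finite perturbation, pairwise almost disjoint. This reduction is legitimate because any $(\omega,\omega)$-splitter for a subfamily of $\overline{X}$ is automatically an $(\omega,\omega)$-splitter for the original family.

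Case (a) is immediate. Pick $S\in\mathcal{S}$ splitting $Y$. For every $n\in I$, $|X_{n}\cap S|\geq|Y\cap S|-|Y\setminus X_{n}|=\omega$ and likewise $|X_{n}\cap(\omega\setminus S)|=\omega$, so $S$ even $\omega$-splits $\{X_{n}:n\in I\}$, which is more than enough.

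Case (b) is the main obstacle, and where the real combinatorial work lives. The idea is to encode the AD structure of $\overline{X}$ into an auxiliary infinite set $Z\subseteq\omega$, constructed by interleaving carefully sized initial pieces of the $X_{n}$'s, so that any splitter of $Z$ drawn from $\mathcal{S}$ (or a short Boolean combination with elements of $\mathcal{S}$) is forced to have infinite intersection with infinitely many $X_{n}$ on each side. The delicate point is that we must work with a \emph{single} element of $\mathcal{S}$: we cannot glue countably many splitters together, since $\mathfrak{s}^{\aleph_{0}}$ may exceed $\mathfrak{s}$, so such a glued set need not lie in $\mathcal{S}$. This rigidity constraint is what forces the encoding of $Z$ to be intricate and is, I expect, the crux of the Mildenberger--Raghavan--Stepr\={a}ns argument; the rest of the proof assembles Cases (a) and (b) into the claimed upper bound $\mathfrak{s}$, with the matching lower bound following from the trivial observation that an $(\omega,\omega)$-splitter of the constant sequence $X_{n}=X$ is a splitter of $X$.
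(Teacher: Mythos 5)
Your attempt has two genuine gaps, and the first one is fatal. The ``Ramsey-style thinning'' dichotomy on which everything rests is false: it is not true that every countable family of infinite sets has an infinite subfamily which either admits an infinite pseudo-intersection or is pairwise almost disjoint. Identify $\omega$ with $\left[  \omega\right]  ^{3}$ and put $X_{n}=\left\{  e\in\left[  \omega\right]  ^{3}\mid n\in e\right\}  $. Any two of these sets intersect in the infinite set of triples containing both indices, so no infinite subfamily is almost disjoint; but any three of them intersect in a single triple, so no three-element (let alone infinite) subfamily has the strong finite intersection property, and hence no infinite subfamily has an infinite pseudo-intersection. Your case analysis therefore does not cover all countable families, and the reduction ``an $\left(  \omega,\omega\right)  $-splitter for a subfamily is one for the whole family'' cannot be invoked, because there may be no subfamily of either of your two kinds to pass to.

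Second, even where the dichotomy does apply, Case (b) --- which you yourself identify as the heart of the matter --- is not proved: you describe an intended encoding of the almost disjoint structure into a single auxiliary set $Z$ and then explicitly defer the construction to ``the crux of the Mildenberger--Raghavan--Stepr\={a}ns argument.'' That deferred step is precisely the content of the proposition, so nothing beyond the easy Case (a) has been established. Note also that your framing claim --- that any splitting family of size $\mathfrak{s}$, closed under finite Boolean combinations, is automatically $\left(  \omega,\omega\right)  $-splitting --- is itself unjustified; the paper only obtains statements of this flavour under additional cardinal hypotheses (compare its proof that a splitting family of size $\mathfrak{s}$ is Miller-tree splitting, which needs $\mathfrak{s}<\mathfrak{b}$, while the case $\mathfrak{b}\leq\mathfrak{s}$ is handled by a different family). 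Finally, be aware that the paper does not reprove this proposition at all --- it is quoted from the cited reference --- so there is no in-paper argument to compare against; as written, your proposal is an outline of a strategy whose key reduction fails and whose main case is left open.
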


\qquad\ \ 

The key combinatorial feature of $\left(  \omega,\omega\right)  $-splitting
families is the following result of Raghavan and Stepr\={a}ns:

\begin{proposition}
[\cite{OnWeaklytightFamilies}]If $\mathcal{S}$ is an $\left(  \omega
,\omega\right)  $-splitting family, $\mathcal{A}$ an \textsf{AD} family and
$X\in\mathcal{I}\left(  \mathcal{A}\right)  ^{+}$ then there is $S\in
\mathcal{S}$ such that $X\cap S,$ $X\cap\left(  \omega\setminus S\right)
\in\mathcal{I}\left(  \mathcal{A}\right)  ^{+}.$
\end{proposition}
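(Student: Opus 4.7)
The plan is to reduce the proposition to an application of the $(\omega,\omega)$-splitting property of $\mathcal{S}$ against a carefully chosen countable family inside $X$. Concretely, I aim to construct pairwise disjoint infinite sets $X_{n}\subseteq X$ ($n\in\omega$) satisfying the property $(\star)$: for every $A\in\mathcal{A}$, the set $\{n\in\omega:|X_{n}\cap A|=\omega\}$ is finite. Taking such an $\overline{X}=\{X_{n}:n\in\omega\}$, the hypothesis gives $S\in\mathcal{S}$ which $(\omega,\omega)$-splits $\overline{X}$, and I claim this $S$ is as required.

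Verifying the claim is a short contradiction. Suppose $S\cap X\in\mathcal{I}(\mathcal{A})$; then $S\cap X\subseteq^{*}\bigcup F$ for some finite $F\subseteq\mathcal{A}$. For every $n$ with $|X_{n}\cap S|=\omega$, since $X_{n}\cap S\subseteq^{*}\bigcup F$, some $A\in F$ must satisfy $|X_{n}\cap A|=\omega$; by $(\star)$ and the finiteness of $F$, only finitely many such $n$ exist, contradicting that $S$ $(\omega,\omega)$-splits $\overline{X}$. The argument for $X\setminus S\in\mathcal{I}(\mathcal{A})^{+}$ is symmetric, applied to $\omega\setminus S$.

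For the construction of $\overline{X}$, I would split into cases based on $\mathcal{A}_{X}:=\{A\in\mathcal{A}:|A\cap X|=\omega\}$. If $\mathcal{A}_{X}$ is finite, then $Y:=X\setminus\bigcup\mathcal{A}_{X}$ is infinite (indeed in $\mathcal{I}(\mathcal{A})^{+}$, since $X$ is), and $|A\cap Y|<\omega$ for every $A\in\mathcal{A}$; any partition of $Y$ into countably many infinite pieces then gives $\overline{X}$ satisfying $(\star)$ vacuously. If $\mathcal{A}_{X}$ is infinite, I pick an injective sequence $\{A_{n}:n\in\omega\}\subseteq\mathcal{A}_{X}$ and set $X_{n}=(A_{n}\cap X)\setminus\bigcup_{k<n}A_{k}$; almost disjointness of $\mathcal{A}$ makes $X_n$ infinite and forces the $X_{n}$'s to be pairwise disjoint, and the inclusion $X_{n}\cap A\subseteq A_{n}\cap A$ yields $|X_{n}\cap A|=\omega$ only when $A=A_{n}$, giving $(\star)$.

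The main obstacle, to my mind, is isolating the property $(\star)$ that couples $(\omega,\omega)$-splitting to preservation of $\mathcal{I}(\mathcal{A})$-positivity; once it is in hand, both the verification and the case analysis are essentially forced, and no enumeration of $\mathcal{A}_X$ beyond $\omega$ is ever needed even if $\mathcal{A}_X$ happens to be uncountable.
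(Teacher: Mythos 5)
Your proof is correct. Note that the paper itself gives no proof of this proposition --- it is quoted from the Raghavan--Stepr\={a}ns paper on weakly tight families --- so there is nothing in-text to compare against; your argument (disjointify $X$ into countably many infinite pieces each meeting only finitely many members of $\mathcal{A}$ infinitely, then apply $(\omega,\omega)$-splitting and observe that a positive witness among the pieces survives on each side) is essentially the standard one, and both the case analysis on $\mathcal{A}_{X}$ and the verification via $(\star)$ check out.
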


\qquad\ \ \ 

Given $X\subseteq\omega$ we denote $X^{0}=X$ and $X^{1}=\omega\setminus X.$ By
the previous result, if $\mathcal{A}$ is an \textsf{AD} family, $X\in
\mathcal{I}\left(  \mathcal{A}\right)  ^{+}$ and $\mathcal{S}=\left\{
S_{\alpha}\mid\alpha<\mathfrak{s}\right\}  $ is an $\left(  \omega
,\omega\right)  $-splitting family then there are $\alpha<\mathfrak{s}$ and
$\tau_{X}^{\mathcal{A}}\in2^{\alpha}$ such that:

\begin{enumerate}
\item If $\beta<\alpha$ then $X\cap S_{\beta}^{1-\tau_{X}^{\mathcal{A}}\left(
\beta\right)  }\in\mathcal{I}\left(  \mathcal{A}\right)  .$

\item $X\cap S_{\alpha},$ $X\setminus S_{\alpha}\in\mathcal{I}\left(
\mathcal{A}\right)  ^{+}.$
\end{enumerate}

\qquad\ \ \ \ 

Clearly $\tau_{X}^{\mathcal{A}}$ $\in2^{<\mathfrak{s}}$ is unique and if
$Y\in\left[  X\right]  ^{\omega}\cap\mathcal{I}\left(  \mathcal{A}\right)
^{+}$ then $\tau_{Y}^{\mathcal{A}}$ extends $\tau_{X}^{\mathcal{A}}.$ We can
now prove the main result of this section:\qquad\ \ \ 

\begin{theorem}
[Shelah \cite{SANEPlayer}]If $\mathfrak{s\leq a}$ then there is a completely
separable \textsf{MAD} family.
\end{theorem}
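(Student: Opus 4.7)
The plan is to build the required completely separable \textsf{MAD} family by transfinite recursion of length $\mathfrak{c}$, using an $(\omega,\omega)$-splitting family as a navigational tool. First I would fix an $(\omega,\omega)$-splitting family $\mathcal{S}=\{S_{\alpha}\mid\alpha<\mathfrak{s}\}$ of size $\mathfrak{s}$, which exists by the Mildenberger--Raghavan--Stepr\={a}ns proposition, and I would exploit the monotonicity of the type assignment: for any \textsf{AD} family $\mathcal{B}$ and positive $Y\subseteq X$, $\tau_{Y}^{\mathcal{B}}$ extends $\tau_{X}^{\mathcal{B}}$.

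Next, I would fix an enumeration $\left[\omega\right]^{\omega}=\{X_{\xi}\mid\xi<\mathfrak{c}\}$ in which every infinite subset of $\omega$ appears cofinally often, and build an increasing chain of \textsf{AD} families $\mathcal{A}_{\xi}=\{A_{\eta}\mid\eta<\xi\}$. At stage $\xi$, if $X_{\xi}$ is $\mathcal{I}(\mathcal{A}_{\xi})$-positive, I would compute $\tau=\tau_{X_{\xi}}^{\mathcal{A}_{\xi}}\in2^{<\mathfrak{s}}$ and use $\tau$ together with $\mathcal{S}$ to trim $X_{\xi}$ to a positive subset $X_{\xi}^{\prime}$ on which only a subfamily of $\mathcal{A}_{\xi}$ of size strictly below $\mathfrak{a}$ is still infinite. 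The hypothesis $\mathfrak{s}\leq\mathfrak{a}$ is precisely what guarantees that the length of $\tau$ is small enough for this localization to succeed. With that done, I would apply $\mathfrak{a}$-smallness of the surviving subfamily on $X_{\xi}^{\prime}$ (viewed via a bijection with $\omega$) to obtain $A_{\xi}\subseteq X_{\xi}^{\prime}\subseteq X_{\xi}$ almost disjoint from all of $\mathcal{A}_{\xi}$.

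To verify complete separability, take any $X\in\mathcal{I}(\mathcal{A})^{+}$. Its final type $\tau_{X}^{\mathcal{A}}$ has some fixed length $\lambda<\mathfrak{s}$, and since $X$ appears as $X_{\xi}$ cofinally often, I would pick a stage $\xi$ so large that $\tau_{X_{\xi}}^{\mathcal{A}_{\xi}}$ has already reached its eventual value and the local subfamily has stabilized; at that stage the construction yields $A_{\xi}\subseteq X$. Maximality of $\mathcal{A}$ follows in the same way from the cofinal appearance of every $X\in\left[\omega\right]^{\omega}$ in the enumeration.

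The main obstacle is the trimming-and-localization step. A naive recursion that simply picks $A_{\xi}\subseteq X_{\xi}$ almost disjoint from $\mathcal{A}_{\xi}$ by a direct $\mathfrak{a}$-smallness argument fails as soon as $\xi$ passes $\mathfrak{a}$, because $|\mathcal{A}_{\xi}|$ can then reach $\mathfrak{a}$. Shelah's insight is to push the $\mathfrak{a}$-smallness onto a locally defined subfamily inside the zone dictated by $\tau$; arranging this carefully, particularly at limit stages in the type tree and when verifying that the trimmed $X_{\xi}^{\prime}$ remains positive, is the technical heart of the proof.
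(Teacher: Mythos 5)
Your high-level frame does match Shelah's argument as the paper presents it (a length-$\mathfrak{c}$ recursion guided by an $(\omega,\omega)$-splitting family, with $\mathfrak{s}\leq\mathfrak{a}$ entering because the relevant ``local'' subfamily has size below $\mathfrak{a}$), but the step you defer --- trimming $X_{\xi}$ to a positive $X_{\xi}'$ on which only a subfamily of $\mathcal{A}_{\xi}$ of size $<\mathfrak{a}$ survives --- is the entire content of the theorem, and the mechanism you hint at (compute the single type $\tau_{X_{\xi}}^{\mathcal{A}_{\xi}}$ and trim along it) does not deliver it. Knowing $\tau_{X_{\xi}}^{\mathcal{A}_{\xi}}$ by itself says nothing about how many members of $\mathcal{A}_{\xi}$ meet $X_{\xi}$, or any trimmed version of it, infinitely. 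Two ingredients are missing. First, the recursion must also assign to each constructed $A_{\alpha}$ a code $\sigma_{\alpha}\in 2^{<\mathfrak{s}}$, all codes distinct, with $A_{\alpha}\subseteq^{\ast}S_{\zeta}^{\sigma_{\alpha}\left(  \zeta\right)  }$ for $\zeta<dom\left(  \sigma_{\alpha}\right)$; it is only against these codes that a type localizes the family. Second, at stage $\delta$ one does not use a single type but builds a binary tree $\left\{  X_{s}\mid s\in2^{<\omega}\right\}$ of positive sets (splitting each $X_{s}$ at the first $S_{\alpha_{s}}$ that cuts it into two positive pieces), obtaining continuum many pairwise incompatible limit codes $\eta_{f}$ for $f\in2^{\omega}$ (here $cf\left(  \mathfrak{s}\right)  >\omega$ is used); since $\left\vert \delta\right\vert <\mathfrak{c}$, some $\eta_{f}$ is extended by no earlier $\sigma_{\alpha}$, and one takes a positive pseudo-intersection $Y$ of the $X_{f\upharpoonright n}$. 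Covering each $Y\cap S_{\zeta}^{1-\eta_{f}\left(  \zeta\right)  }$ by a finite $F_{\zeta}\subseteq\mathcal{A}_{\delta}$, the family $\mathcal{D}=\left\{  A_{\alpha}\mid\sigma_{\alpha}\subseteq\eta_{f}\right\}  \cup\bigcup_{\zeta}F_{\zeta}$ has size $<\mathfrak{s}\leq\mathfrak{a}$, and any $A_{\delta}\subseteq Y$ almost disjoint from $\mathcal{D}$ is automatically almost disjoint from all of $\mathcal{A}_{\delta}$, because every $A_{\alpha}$ with $\sigma_{\alpha}\nsubseteq\eta_{f}$ satisfies $A_{\alpha}\cap Y\subseteq^{\ast}\bigcup F_{\zeta}$ where $\zeta=\Delta\left(  \sigma_{\alpha},\eta_{f}\right)$. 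The freshness of $\eta_{f}$ is exactly what makes these $\Delta$'s defined and $\mathcal{D}$ small; without the codes and the tree the localization you assert has no proof.

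A smaller point: your verification of complete separability is more elaborate than necessary and reflects the same gap. In the actual construction nothing needs to ``stabilize'': at any single stage $\xi$ with $X_{\xi}\in\mathcal{I}\left(  \mathcal{A}_{\xi}\right)  ^{+}$ the construction already produces $A_{\xi}\subseteq X_{\xi}$, so one appearance of each infinite set in the enumeration suffices, and cofinal repetition is not needed.
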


\begin{proof}
Let $\left[  \omega\right]  ^{\omega}=\left\{  X_{\alpha}\mid\alpha
<\mathfrak{c}\right\}  .$ We will recursively construct $\mathcal{A}=\left\{
A_{\alpha}\mid\alpha<\mathfrak{c}\right\}  $ and $\left\{  \sigma_{\alpha}%
\mid\alpha<\mathfrak{c}\right\}  \subseteq2^{<\mathfrak{s}}$ such that for
every $\alpha<\mathfrak{c}$ the following holds: (where $\mathcal{A}_{\alpha
}=\left\{  A_{\xi}\mid\xi<\alpha\right\}  $)

\begin{enumerate}
\item $\mathcal{A}_{\alpha}$ is an \textsf{AD} family.

\item If $X_{\alpha}\in\mathcal{I}\left(  \mathcal{A}_{\alpha}\right)  ^{+}$
then $A_{\alpha}\subseteq X_{\alpha}.$

\item If $\alpha\neq\beta$ then $\sigma_{\alpha}\neq\sigma_{\beta}.$

\item If $\xi<dom\left(  \sigma_{\alpha}\right)  $ then $A_{\alpha}%
\subseteq^{\ast}S_{\xi}^{\sigma_{\alpha}\left(  \xi\right)  }.$
\end{enumerate}

\qquad\ \ 

It is clear that if we manage to do this then we will have achieved to
construct a completely separable \textsf{MAD} family. Assume $\mathcal{A}%
_{\delta}=\left\{  A_{\xi}\mid\xi<\delta\right\}  $ has already been
constructed. Let $X=$ $X_{\delta}$ if $X_{\delta}\in\mathcal{I}\left(
\mathcal{A}_{\delta}\right)  ^{+}$ and if $X_{\delta}\in\mathcal{I}\left(
\mathcal{A}_{\delta}\right)  $ let $X$ be any other element of $\mathcal{I}%
\left(  \mathcal{A}_{\delta}\right)  ^{+}.$ We recursively find $\left\{
X_{s}\mid s\in2^{<\omega}\right\}  \subseteq\mathcal{I}\left(  \mathcal{A}%
_{\delta}\right)  ^{+}$ ,$\left\{  \eta_{s}\mid s\in2^{<\omega}\right\}
\subseteq2^{<\mathfrak{s}}$ and $\left\{  \alpha_{s}\mid s\in2^{<\omega
}\right\}  $ as follows:

\begin{enumerate}
\item $X_{\emptyset}=X.$

\item $\eta_{s}=\tau_{X_{s}}^{\mathcal{A}_{\delta}}$ and $\alpha
_{s}=dom\left(  \eta_{s}\right)  .$

\item $X_{s^{\frown}0}=X_{s}\cap S_{\alpha_{s}}$ and $X_{s^{\frown}1}%
=X_{s}\cap\left(  \omega\setminus S_{\alpha_{s}}\right)  .$
\end{enumerate}

\qquad\ \ 

Note that if $t\subseteq s$ then $X_{s}\subseteq X_{t}$ and $\eta_{t}%
\subseteq\eta_{s}.$ On the other hand, if $s$ is incompatible with $t$ then
$\eta_{s}$ and $\eta_{t}$ are incompatible. For every $f\in2^{\omega}$ let
$\eta_{f}=\bigcup\limits_{n\in\omega}\eta_{f\upharpoonright n}$. Since
$\mathfrak{s}$ has uncountable cofinality, each $\eta_{f}$ is an element of
$2^{<\mathfrak{s}}$ and if $f\neq g$ then $\eta_{f}$ and $\eta_{g}$ are
incompatible nodes of $2^{<\mathfrak{s}}.$ Since $\delta$ is smaller than
$\mathfrak{c}$ there is $f\in2^{\omega}$ such that there is no $\alpha<\delta$
such that $\sigma_{\alpha}$ extends $\eta_{f}.$ Since $\left\{
X_{f\upharpoonright n}\mid n\in\omega\right\}  $ is a decreasing sequence of
elements in $\mathcal{I}\left(  \mathcal{A}_{\delta}\right)  ^{+}$, there is
$Y\in\mathcal{I}\left(  \mathcal{A}_{\delta}\right)  ^{+}$ such that
$Y\subseteq^{\ast}X_{f\upharpoonright n}$ for every $n\in\omega$ (see
\cite{HappyFamilies} proposition 0.7 or
\cite{AlmostDisjointFamiliesandTopology} proposition 2).

\qquad\ \ \ \qquad\ \ \ \ \ 

Letting $\beta=dom\left(  \eta_{f}\right)  ,$ we claim that if $\xi<\beta$
then $Y\cap S_{\xi}^{1-\eta_{f}\left(  \xi\right)  }\in\mathcal{I}\left(
\mathcal{A}\right)  .$ To prove this, let $n$ be the first natural number such
that $\xi<dom\left(  \eta_{f\upharpoonright n}\right)  .$ By our construction,
we know that $X_{f\upharpoonright n}\cap S_{\xi}^{1-\eta_{f}\left(
\xi\right)  }\in\mathcal{I}\left(  \mathcal{A}\right)  $ and since
$Y\subseteq^{\ast}X_{f\upharpoonright n}$ the result follows.

\qquad\ \ \ 

For every $\xi<\beta$ let $F_{\xi}\in\left[  \mathcal{A}\right]  ^{<\omega}$
such that $Y\cap S_{\xi}^{1-\eta_{f}\left(  \xi\right)  }\subseteq^{\ast
}\bigcup F_{\xi}$ and let $W=\left\{  A_{\alpha}\mid\sigma_{\alpha}%
\subseteq\eta_{f}\right\}  .$ Let $\mathcal{D}=W\cup\bigcup\limits_{\xi<\beta
}F_{\xi}$ and note that $\mathcal{D}$ has size less than $\mathfrak{s},$ hence
it has size less than $\mathfrak{a}.$ In this way we conclude that
$Y\upharpoonright\mathcal{D}$ is not a \textsf{MAD} family, so there is
$A_{\delta}\in\left[  Y\right]  ^{\omega}$ that is almost disjoint with every
element of $\mathcal{D}$ and define $\sigma_{\delta}=\eta_{f}.$ We claim that
$A_{\delta}$ is almost disjoint with $\mathcal{A}_{\delta}.$ To prove this,
let $\alpha<\delta,$ in case $A_{\alpha}\in W$ we already know $A_{\alpha}\cap
A_{\delta}$ is finite so assume $A_{\alpha}\notin W.$ Letting $\xi
=\Delta\left(  \sigma_{\delta},\sigma_{\alpha}\right)  $ we know that
$A_{\alpha}\subseteq^{\ast}S_{\xi}^{1-\sigma_{\delta}\left(  \xi\right)  }$ so
$A_{\alpha}\cap A_{\delta}\subseteq^{\ast}\bigcup F_{\xi}$ but since $F_{\xi
}\subseteq\mathcal{D}$ we conclude that $A_{\delta}$ is almost disjoint with
$\bigcup F_{\xi}$ and then $A_{\alpha}\cap A_{\delta}$ must be finite.
\end{proof}

\qquad\ \ 

Recall that an \textsf{AD }family $\mathcal{A}$ is \emph{nowhere MAD }if for
every $X\in\mathcal{I}\left(  \mathcal{A}\right)  ^{+}$ there is $Y\in\left[
X\right]  ^{\omega}$ such that $Y$ is almost disjoint with $\mathcal{A}.$ A
key feature in the previous proof is that each $\mathcal{A}_{\delta}=\left\{
A_{\xi}\mid\xi<\delta\right\}  $ is nowhere \textsf{MAD}.

\qquad\ \ \ \ \ 

The first step to construct a $+$-Ramsey \textsf{MAD} family is to prove that
every Miller-indestructible \textsf{MAD} family has this property. If
$\mathcal{A}$ is a \textsf{MAD} family and $\mathbb{P}$ is a partial order,
then we say $\mathcal{A}$ is $\mathbb{P}$\emph{-indestructible }if
$\mathcal{A}$ is still a \textsf{MAD} family after forcing with $\mathbb{P}.$
The destructibility of \textsf{MAD} families has become a very important area
of research with many fundamental questions still open (the reader may consult
\cite{OrderingMADfamiliesalaKatetov}, \cite{ForcingwithQuotients}, or
\cite{ForcingIndestructibilityofMADFamilies} to learn more about the
indestructibility of \textsf{MAD} families and ideals). The following property
of \textsf{MAD} families plays a fundamental role in the study of destructibility:

\begin{definition}
A \textsf{MAD} family $\mathcal{A}$ is \emph{tight }if for every $\left\{
X_{n}\mid n\in\omega\right\}  \subseteq\mathcal{I}\left(  \mathcal{A}\right)
^{+}$ there is $B\in\mathcal{I}\left(  \mathcal{A}\right)  $ such that $B\cap
X_{n}$ is infinite for every $n\in\omega.$
\end{definition}

\qquad\ \ 

In \cite{OrderingMADfamiliesalaKatetov} it is proved that every tight
\textsf{MAD} family is Cohen-indestructible and that every tight \textsf{MAD}
family is $+\,$-Ramsey. We will prove that every Miller-indestructible
\textsf{MAD} family is $+$-Ramsey, this improves the previous result since
Miller-indestructibility follows from Cohen-indestructibility (see
\cite{ForcingIndestructibilityofMADFamilies}). First we need the following
lemma:\ \ \ \ \ \ \ \ \ \ \ \ \ \ \ \ \ \ \ \ \ \ \ \ \ \ \ \qquad\ 

\begin{lemma}
Let $\mathcal{A}$ be a \textsf{MAD} family and $T$ an $\mathcal{I}\left(
\mathcal{A}\right)  ^{+}$-branching tree. Then there is a subtree $S\subseteq
T$ with the following properties:

\begin{enumerate}
\item If $s\in S$ there is $A_{s}\in\mathcal{A}$ such that $suc_{S}\left(
s\right)  \in\left[  A_{s}\right]  ^{\omega}.$

\item If $s$ and $t$ are two different nodes of $S,$ then $A_{s}\neq A_{t}$
and $suc_{S}\left(  s\right)  \cap suc_{S}\left(  t\right)  =\emptyset.$
\end{enumerate}
\end{lemma}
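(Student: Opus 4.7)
I would build $S$ by a level-by-level recursion, enumerating the nodes of $S$ as $\langle s_n : n<\omega\rangle$ with $s_0=\emptyset$ and with the convention that every proper initial segment of $s_n$ appears among $s_0,\dots,s_{n-1}$ (any breadth-first enumeration of the produced subtree works). At stage $n$ I will commit to an element $A_{s_n}\in\mathcal{A}$ and a set $\mathrm{suc}_S(s_n)\subseteq\mathrm{suc}_T(s_n)$; the children of $s_n$ in $S$ are then the $s_n{}^{\frown}\langle k\rangle$ for $k\in\mathrm{suc}_S(s_n)$.

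The key combinatorial input, on which the whole construction rests, is the claim that for every $s\in T$ the set $\mathrm{suc}_T(s)$ has infinite intersection with \emph{infinitely many} elements of $\mathcal{A}$. Indeed, maximality of $\mathcal{A}$ forces at least one such intersection to be infinite; and if only finitely many $A_0,\dots,A_{k-1}\in\mathcal{A}$ met $\mathrm{suc}_T(s)$ infinitely, then $\mathrm{suc}_T(s)\setminus(A_0\cup\dots\cup A_{k-1})$ would have finite intersection with every member of $\mathcal{A}$, hence would itself be finite by maximality, forcing $\mathrm{suc}_T(s)\in\mathcal{I}(\mathcal{A})$, against the hypothesis that $T$ is $\mathcal{I}(\mathcal{A})^+$-branching.

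With this claim in hand, stage $n$ runs as follows. Only finitely many elements of $\mathcal{A}$ are forbidden, namely the previously chosen $A_{s_0},\dots,A_{s_{n-1}}$, so I may pick $A_{s_n}\in\mathcal{A}\setminus\{A_{s_i}:i<n\}$ with $|\mathrm{suc}_T(s_n)\cap A_{s_n}|=\omega$. Since any two distinct members of $\mathcal{A}$ intersect in a finite set, $\bigcup_{i<n}(A_{s_n}\cap A_{s_i})$ is finite, so the same is true of $A_{s_n}\cap\bigcup_{i<n}\mathrm{suc}_S(s_i)$. I define
\[
\mathrm{suc}_S(s_n)\;=\;\bigl(\mathrm{suc}_T(s_n)\cap A_{s_n}\bigr)\setminus\bigcup_{i<n}\mathrm{suc}_S(s_i),
\]
which is a cofinite subset of the infinite set $\mathrm{suc}_T(s_n)\cap A_{s_n}$, hence lies in $[A_{s_n}]^\omega$, and is by construction disjoint from every earlier $\mathrm{suc}_S(s_i)$. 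Appending the new children to the enumeration and continuing completes the recursion.

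Both conclusions of the lemma are then immediate: each $s_n\in S$ is witnessed by $A_{s_n}\in\mathcal{A}$, distinct nodes carry distinct labels by choice of $A_{s_n}$, and the successor sets are pairwise disjoint by the subtraction in the definition. I do not see a genuine obstacle here; the only nontrivial ingredient is the \emph{infinitely many $A$'s} claim above, and the rest is bookkeeping.
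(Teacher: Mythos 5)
Your proposal is correct and follows essentially the same route as the paper: the same key observation (that maximality forces $suc_T(s)$ to meet infinitely many members of $\mathcal{A}$ in an infinite set, for otherwise $suc_T(s)$ would lie in $\mathcal{I}(\mathcal{A})$), followed by the same recursive choice of distinct $A_{s}$'s and pairwise disjoint infinite successor sets. You merely spell out the bookkeeping (the breadth-first enumeration and the cofinite subtraction) that the paper leaves implicit.
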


\begin{proof}
Since $T$ is an $\mathcal{I}\left(  \mathcal{A}\right)  ^{+}$-branching tree
and $\mathcal{A}$ is \textsf{MAD}$\,$, $suc_{T}\left(  t\right)  $ infinitely
intersects many infinite elements of $\mathcal{A}$ for every $t\in T$.
Recursively, for every $t\in T$ we choose $A_{t}\in\mathcal{A}$ and $B_{t}%
\in\left[  A_{t}\cap suc_{T}\left(  t\right)  \right]  ^{\omega}$ such that
$B_{t}\cap B_{s}=\emptyset$ and $A_{s}\neq A_{t}$ whenever $t\neq s.$ We then
recursively construct $S\subseteq T$ such that if $s\in S$ then $suc_{S}%
\left(  s\right)  =B_{s}.$
\end{proof}

\qquad\ \ \ \qquad\qquad\ \ \ \qquad\ \ 

With the previous lemma we can now prove the following,

\begin{proposition}
If $\mathcal{A}$ is Miller-indestructible then $\mathcal{A}$ is $+$-Ramsey.
\end{proposition}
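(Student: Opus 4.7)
I apply the previous lemma to prune $T$ down to a subtree $S\subseteq T$ whose every successor set $B_{s}:=suc_{S}(s)$ is an infinite subset of some $A_{s}\in\mathcal{A}$, with the $B_{s}$ pairwise disjoint and the $A_{s}$ pairwise distinct. Every node of $S$ then has infinitely many successors, so $S$ is a condition in Miller forcing. I will work below $S$ and let $\dot{g}$ name the Miller-generic branch; because the $B_{s}$ are pairwise disjoint, $\dot{g}$ is forced injective and $im(\dot{g})$ is forced to be an infinite subset of $\omega$.

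Before invoking Miller-indestructibility, the following purely combinatorial observation is needed: $S\Vdash im(\dot{g})\in\mathcal{I}(\mathcal{A})^{+}$. Given any finite $F\subseteq\mathcal{A}$ and any $S'\le S$, I form $S''\le S'$ by pruning successor sets: at each $s\in S'$ with $A_{s}\notin F$ I replace $suc_{S'}(s)$ by $suc_{S'}(s)\setminus\bigcup F$ (cofinite in $suc_{S'}(s)$, since $B_{s}\cap\bigcup F$ is finite by almost-disjointness), while at the at most $|F|$ exceptional nodes with $A_{s}\in F$ I leave the successor set unchanged. By distinctness of the $A_{s}$'s, every branch of $S''$ passes through at most $|F|$ exceptional nodes, so $S''\Vdash|im(\dot{g})\cap\bigcup F|\le|F|$. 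Since any witness to $im(\dot{g})\in\mathcal{I}(\mathcal{A})$ is a finite subset of $\mathcal{A}$ and therefore belongs to $V$, no $S'\le S$ forces $im(\dot{g})\in\mathcal{I}(\mathcal{A})$.

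Miller-indestructibility now enters: $\mathcal{A}$ remains \textsf{MAD} in $V[G]$, and together with $im(\dot{g})\in\mathcal{I}(\mathcal{A})^{+}$ this upgrades to $im(\dot{g})\in\mathcal{I}(\mathcal{A})^{++}$, meaning $im(\dot{g})$ meets infinitely many distinct elements of $\mathcal{A}$ in infinite sets. A standard Miller fusion argument then produces a fusion sequence $S\ge_{0}S^{0}\ge_{1}S^{1}\ge_{2}\cdots$ together with distinct $A_{0},A_{1},\ldots\in\mathcal{A}$ such that $S^{n}\Vdash|im(\dot{g})\cap A_{n}|=\omega$, whose fusion limit $S_{\infty}$ is a Miller condition forcing this simultaneously for every $n$.

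Finally I extract the required ground-model branch $f\in[S_{\infty}]\cap V$ by a diagonal construction inside $V$. Enumerating $\omega\times\omega$ as $(n_{i},k_{i})_{i\in\omega}$ with every $n$ appearing cofinally, at stage $i$ the forcing statement $S_{\infty}\Vdash|im(\dot{g})\cap A_{n_{i}}|=\omega$ yields the density fact that every $s\in S_{\infty}$ admits an extension $t\in S_{\infty}$ with $t(k)\in A_{n_{i}}$ for some $k\in[\max(|s|,k_{i}),|t|)$; I use this to extend the current finite approximation of $f$ and add a new element of $A_{n_{i}}$ to $im(f)$. The resulting $f\in[S_{\infty}]\subseteq[T]$ satisfies $|im(f)\cap A_{n}|=\omega$ for every $n$, so $im(f)\in\mathcal{I}(\mathcal{A})^{++}\subseteq\mathcal{I}(\mathcal{A})^{+}$, as required. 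The principal technical hurdle in this plan is setting up the Miller fusion of the third paragraph so that at stage $n$ the condition $S^{n}$ both decides a new $A_{n}$ and preserves enough of the splitting structure of $S^{n-1}$ for the limit $S_{\infty}$ to remain superperfect.
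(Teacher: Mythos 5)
Your outline follows the paper's (prune $T$ to a tree $S$ whose successor sets are infinite subsets of pairwise distinct members of $\mathcal{A}$, show the generic image is positive, invoke indestructibility, extract a ground-model branch), but two of the steps fail as written. First, the pruning in your second paragraph is applied ``at each $s\in S'$,'' but at a non-splitting node of $S'$ the set $suc_{S'}(s)$ is a singleton, and deleting $\bigcup F$ from it may empty it and disconnect the tree; if instead you prune only at the splitting nodes of $S'$, then the values the generic takes at the non-splitting nodes of $S'$ are forced upon you and are not controlled, so the bound $\left\vert im(\dot{g})\cap\bigcup F\right\vert \leq\left\vert F\right\vert$ does not follow. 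The conclusion you want is true, but the clean route (and the paper's) argues in the opposite direction, with the forcing relation rather than a constructed condition: if some $S_{1}\leq S$ forced $im(\dot{g})\subseteq B$ with $B\in\mathcal{I}(\mathcal{A})\cap V$, then $suc_{S_{1}}(t)\subseteq B$ for every splitting node $t$ of $S_{1}$, so $B$ would contain infinite subsets of infinitely many distinct members of $\mathcal{A}$ and hence be $\mathcal{I}(\mathcal{A})$-positive, a contradiction.

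Second, and more seriously, the fusion in your third paragraph is exactly the step you flag as a ``hurdle,'' and it is a genuine obstruction rather than a routine one: at an amalgamation stage you must extend each of the finitely many subtrees of $S^{n}$ below the designated splitting nodes, and each such extension decides its own element of $\mathcal{A}$ met infinitely by $im(\dot{g})$; there is no reason the different pieces can be made to agree on a single $A_{n+1}$, and the amalgamated condition forces only a disjunction. The paper shows the fusion is unnecessary: it suffices to build a plain decreasing sequence $S\geq S_{0}\geq S_{1}\geq\cdots$ with $S_{n}$ deciding $\dot{A}_{n}$ to be some $B_{n}\in\mathcal{A}$ (the $B_{n}$ automatically distinct), while at each step lengthening the stem so that $im\left(  stem\left(  S_{n}\right)  \right)$ meets each of $B_{0},\dots,B_{n}$ in at least $n$ points; the required ground-model branch is then $r=\bigcup_{n}stem\left(  S_{n}\right)$, whose image meets every $B_{i}$ infinitely and so lies in $\mathcal{I}(\mathcal{A})^{+}$. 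Your final diagonal extraction is essentially this stem-lengthening argument, so you can interleave it with the decisions of the $\dot{A}_{n}$ along a single decreasing sequence and dispense with the fusion altogether.
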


\begin{proof}
Let $\mathcal{A}$ be a Miller-indestructible \textsf{MAD} family and $T$ an
$\mathcal{I}\left(  \mathcal{A}\right)  ^{+}$-branching tree. Let $S$ be an
$\mathcal{I}\left(  \mathcal{A}\right)  $-branching subtree of $T$ as in the
previous lemma. We can then view $S$ as a Miller tree. Let $\dot{r}_{gen}$ be
the name of the generic real and $\dot{X}$ the name of the image of $\dot
{r}_{gen}.$

\qquad\ \ 

We will first argue that $S\Vdash``\dot{X}\notin\mathcal{I}\left(
\mathcal{A}\right)  \textquotedblright$. Assume this is not true, so there is
$S_{1}\leq S$ and $B\in\mathcal{I}\left(  \mathcal{A}\right)  $ ($B$ is an
element of $V$) such that $S_{1}\Vdash``\dot{X}\subseteq B\textquotedblright.$
In this way, if $t$ is a splitting node of $S_{1}$ then $suc_{S_{1}}\left(
t\right)  \subseteq B$ but note that if $t_{1}\neq t_{2}$ are two different
splitting nodes of $S_{2}$ then $suc_{S_{1}}\left(  t_{1}\right)  $ and
$suc_{S_{1}}\left(  t_{2}\right)  $ are two infinite sets contained in
different elements of $\mathcal{A}$, so then $B\in\mathcal{I}\left(
\mathcal{A}\right)  ^{+}$ which is a contradiction.

\qquad\ \ \ 

In this way, $\dot{X}$ is forced by $S$ to be an element of $\mathcal{I}%
\left(  \mathcal{A}\right)  ^{+}$ but since $\mathcal{A}$ is still
\textsf{MAD} after performing a forcing extension of Miller forcing, we then
conclude there are names $\{\dot{A}_{n}\mid n\in\omega\}$ for different
elements of $\mathcal{A}$ such that $S$ forces that $\dot{X}\cap\dot{A}_{n}$
is infinite. We then recursively build two sequences $\left\{  S_{n}\mid
n\in\omega\right\}  $ and $\left\{  B_{n}\mid n\in\omega\right\}  $ such that
for every $n\in\omega$ the following holds:

\begin{enumerate}
\item $S_{n}$ is a Miller tree and $B_{n}\in\mathcal{A}\mathbf{.}$

\item $S_{0}\leq S$ and if $n<m$ then $S_{m}\leq S_{n}.$

\item $S_{n}\Vdash``\dot{A}_{n}=B_{n}\textquotedblright$ (it then follows that
$B_{n}\neq B_{m}$ if $n\neq m$).

\item If $i\leq n$ then $stem\left(  S_{n}\right)  \cap B_{i}$ has size at
least $n.$
\end{enumerate}

\qquad\ \ \ \ \ \ \qquad\ \ 

We then define $r=%
%TCIMACRO{\tbigcup \limits_{n\in\omega}}%
%BeginExpansion
{\textstyle\bigcup\limits_{n\in\omega}}
%EndExpansion
stem\left(  S_{n}\right)  $ then clearly $r\in\left[  S\right]  $ and
$im\left(  r\right)  \in\mathcal{I}\left(  \mathcal{A}\right)  ^{+}%
.$\ \ \qquad\ \ \qquad\ \ 
\end{proof}

\qquad\ \ \ \ \ 

The converse of the previous result is not true in general, this will be shown
in the corollary 27. It is known that every \textsf{MAD }family of size less
than $\mathfrak{d}$ is Miller indestructible (see
\cite{ForcingIndestructibilityofMADFamilies}). We can then conclude the
following unpublished result of Michael Hru\v{s}\'{a}k, which he proved by
completely different means.\qquad\ \ \qquad\ \ 

\begin{corollary}
[Hru\v{s}\'{a}k]Every \textsf{MAD} family of size less than $\mathfrak{d}$ is
$+$-Ramsey. In particular, if $\mathfrak{a<d}$ then there is a $+$-Ramsey
\textsf{MAD} family.
\end{corollary}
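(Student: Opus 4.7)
The corollary is designed to fall out immediately from the preceding proposition together with a citation, so my plan is to simply chain these two facts and verify that nothing extra is needed.

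First I would recall the quoted result from \cite{ForcingIndestructibilityofMADFamilies}: every \textsf{MAD} family of cardinality strictly less than $\mathfrak{d}$ remains \textsf{MAD} in any Miller extension, i.e.\ is Miller-indestructible. The intuitive reason is that the Miller generic is an unbounded (in fact, dominating-type) real, and to destroy a \textsf{MAD} family one needs a new infinite set almost disjoint from every member; a counting argument bounds the families that can be killed by the number of ``threats'' Miller forcing introduces, which is controlled by $\mathfrak{d}$. I would not reprove this — I would just invoke it.

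Given that lemma, the first sentence of the corollary is a one-line deduction from the preceding Proposition: if $|\mathcal{A}|<\mathfrak{d}$ then $\mathcal{A}$ is Miller-indestructible, and by the Proposition Miller-indestructibility implies $+$-Ramsey. Hence $\mathcal{A}$ is $+$-Ramsey.

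For the ``in particular'' clause, I would argue as follows. By definition of $\mathfrak{a}$, there exists a \textsf{MAD} family $\mathcal{A}$ with $|\mathcal{A}|=\mathfrak{a}$. Under the hypothesis $\mathfrak{a}<\mathfrak{d}$, this $\mathcal{A}$ has size less than $\mathfrak{d}$, so by the first part it is $+$-Ramsey, exhibiting the desired family.

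There is essentially no obstacle here; the only thing to double-check is that the cited result from \cite{ForcingIndestructibilityofMADFamilies} is stated in the exact form ``size $<\mathfrak{d}$ implies Miller-indestructible'' (and not, say, only for \textsf{MAD} families satisfying some extra hypothesis), but the author asserts this directly in the paragraph preceding the corollary, so the proof is a two-step concatenation and nothing more.
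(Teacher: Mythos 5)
Your proposal is correct and is exactly the argument the paper intends: the corollary is derived by combining the cited fact that every \textsf{MAD} family of size less than $\mathfrak{d}$ is Miller-indestructible with the preceding proposition that Miller-indestructibility implies $+$-Ramsey, and the ``in particular'' clause follows by taking a \textsf{MAD} family of size $\mathfrak{a}<\mathfrak{d}$. Nothing further is needed.
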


\qquad\ \ \ 

\section*{The construction of a +-Ramsey MAD family}

In this chapter we will construct a $+$-Ramsey \textsf{MAD} family without any
extra hypothesis beyond \textsf{ZFC}$\mathsf{.}$ We will use the construction
of Shelah of a completely separable \textsf{MAD} family, however, the previous
result will help us avoid the need of a \textsf{PCF }hypothesis for our
construction. From now on, we will always assume that all Miller trees are
formed by increasing sequences. If $p$ is a Miller tree, we denote
$Split\left(  p\right)  $ the set of all splitting nodes of $p.$

\begin{definition}
Let $p$ be a Miller tree. Given $f\in\left[  p\right]  $ we define $Sp\left(
p,f\right)  =\left\{  f\left(  n\right)  \mid f\upharpoonright n\in
Split\left(  p\right)  \right\}  $ and $\left[  p\right]  _{split}=\left\{
Sp\left(  p,f\right)  \mid f\in\left[  p\right]  \right\}  .$
\end{definition}

\qquad\ \ \ \ 

We will need the following definitions,\qquad\ \ \ \ \qquad\ \ \ \qquad

\begin{definition}
Let $p$ be a Miller tree and $H:Split\left(  p\right)  \longrightarrow\omega.$
We then define:

\begin{enumerate}
\item $Catch_{\exists}\left(  H\right)  $ is the set \qquad

$\left\{  Sp\left(  f,p\right)  \mid f\in\left[  p\right]  \wedge
\exists^{\infty}n\left(  f\upharpoonright n\in Split\left(  p\right)  \wedge
f\left(  n\right)  <H\left(  f\upharpoonright n\right)  \right)  \right\}  .$

\item $Catch_{\forall}\left(  H\right)  $ is the set \qquad

$\left\{  Sp\left(  f,p\right)  \mid f\in\left[  p\right]  \wedge
\forall^{\infty}n\left(  f\upharpoonright n\in Split\left(  p\right)  \wedge
f\left(  n\right)  <H\left(  f\upharpoonright n\right)  \right)  \right\}  .$

\item Define $\mathcal{K}\left(  p\right)  $ as the collection of all
$A\subseteq\left[  p\right]  _{split}$ for which there is $G:Split\left(
p\right)  \longrightarrow\omega$ such that $A\subseteq Catch_{\exists}\left(
G\right)  .$
\end{enumerate}
\end{definition}

\qquad\ \ \ \qquad\ \ 

Note that if $\mathcal{B}=\left\{  f_{\alpha}\mid\alpha<\mathfrak{b}\right\}
\subseteq\omega^{\omega}$ is an unbounded family of increasing functions then
for every infinite partial function $g\subseteq\omega\times\omega$ there is
$\alpha<\mathfrak{b}$ such that the set $\left\{  n\in dom\left(  g\right)
\mid g\left(  n\right)  <f_{\alpha}\left(  n\right)  \right\}  $ is infinite.
With this simple observation we can prove the following lemma,

\begin{lemma}
$\mathcal{K}\left(  p\right)  $ is a $\sigma$-ideal in $\left[  p\right]
_{split}$ that contains all singletons and $\mathfrak{b=}$ \textsf{add}%
$\left(  \mathcal{K}\left(  p\right)  \right)  =$ \textsf{cov}$\left(
\mathcal{K}\left(  p\right)  \right)  .$
\end{lemma}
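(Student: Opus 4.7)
The plan is to check the four assertions separately. Fix any enumeration $Split(p)=\{s_n : n\in\omega\}$. Because each $f\in[p]$ meets the splitting levels along pairwise distinct initial segments, any finite $F\subseteq Split(p)$ is hit by $f$ at only finitely many splitting levels; this is the technical point that converts pointwise domination on the enumeration into eventual domination along every branch.

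Containment of singletons is immediate: for $x=Sp(p,f)$ put $G(s):=f(|s|)+1$, so $x\in Catch_{\exists}(G)$, and downward closure is built into the definition. For $\sigma$-closure, given $A_n\subseteq Catch_{\exists}(G_n)$ I would set $G(s):=\max_{k\leq|s|}G_k(s)$; then $G_n\leq G$ on every node of length at least $n$, so any $Sp(p,f)\in Catch_{\exists}(G_n)$ is still caught by $G$ after discarding the initial segment of splitting levels below $n$. To see $\mathcal{K}(p)$ is proper, given $G$ one inductively builds $f\in[p]$ by choosing, at each splitting node encountered, a successor strictly larger than the value of $G$ there (possible since splitting successor sets are infinite); then $Sp(p,f)\notin Catch_{\exists}(G)$, so $[p]_{split}\notin\mathcal{K}(p)$.

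For $\mathfrak{b}\leq\textsf{add}(\mathcal{K}(p))$, take $\kappa<\mathfrak{b}$ and witnesses $A_\alpha\subseteq Catch_{\exists}(G_\alpha)$ for $\alpha<\kappa$. Code each $G_\alpha$ as $\hat G_\alpha(n):=G_\alpha(s_n)$ and pick $\hat G\in\omega^\omega$ with $\hat G_\alpha\leq^* \hat G$ for every $\alpha$, then define $G(s_n):=\hat G(n)$. Then $G_\alpha\leq G$ off some finite $F_\alpha\subseteq Split(p)$, and for any $f\in[p]$ only finitely many $m\in T_f:=\{m : f\upharpoonright m\in Split(p)\}$ satisfy $f\upharpoonright m\in F_\alpha$, whence $Catch_{\exists}(G_\alpha)\subseteq Catch_{\exists}(G)$ for each $\alpha$ and $\bigcup_{\alpha<\kappa} A_\alpha\in\mathcal{K}(p)$.

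For $\textsf{cov}(\mathcal{K}(p))\leq\mathfrak{b}$, fix an unbounded family $\{f_\alpha:\alpha<\mathfrak{b}\}$ of increasing functions and set $G_\alpha(s):=f_\alpha(|s|)$. Given $x=Sp(p,f)\in[p]_{split}$, the partial function $m\mapsto f(m)$ with domain $T_f$ is infinite, so by the observation recorded just before the lemma some $f_\alpha$ satisfies $f(m)<f_\alpha(m)=G_\alpha(f\upharpoonright m)$ for infinitely many $m\in T_f$; this is precisely $x\in Catch_{\exists}(G_\alpha)$, so $\{Catch_{\exists}(G_\alpha):\alpha<\mathfrak{b}\}$ covers $[p]_{split}$. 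Since $[p]_{split}\notin\mathcal{K}(p)$, the general inequality $\textsf{add}\leq\textsf{cov}$ applies, closing the chain $\mathfrak{b}\leq\textsf{add}(\mathcal{K}(p))\leq\textsf{cov}(\mathcal{K}(p))\leq\mathfrak{b}$. The only subtlety is making $\leq^*$-domination on the enumeration translate into eventual domination along every branch, and this is essentially free because $T_f$ meets each finite subset of $Split(p)$ finitely often.
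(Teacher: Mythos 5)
Your proof is correct and follows essentially the same route as the paper: $\mathfrak{b}\le\mathsf{add}(\mathcal{K}(p))$ by taking a $\le^{\ast}$-upper bound of the functions $G_{\alpha}$, and $\mathsf{cov}(\mathcal{K}(p))\le\mathfrak{b}$ by indexing an unbounded family of increasing functions over $Split(p)$ and invoking the observation preceding the lemma. The only differences are cosmetic (you use $\left\vert s\right\vert$ where the paper uses an enumeration index) plus the fact that you explicitly verify properness, singletons, and the transfer of a finitely-often-violated domination along a branch, which the paper leaves implicit.
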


\begin{proof}
In order to prove that $\mathfrak{b}$ $\mathfrak{\leq}$ \textsf{add}$\left(
\mathcal{K}\left(  p\right)  \right)  $ it is enough to show that if
$\kappa<\mathfrak{b}$ and $\left\{  H_{\alpha}\mid\alpha<\kappa\right\}
\subseteq\omega^{Split\left(  p\right)  }$ then $%
%TCIMACRO{\tbigcup \limits_{\alpha<\kappa}}%
%BeginExpansion
{\textstyle\bigcup\limits_{\alpha<\kappa}}
%EndExpansion
Catch_{\exists}\left(  H_{\alpha}\right)  \in\mathcal{K}\left(  p\right)  .$
Since $\kappa$ is smaller than $\mathfrak{b}$, we can find $H:Split\left(
p\right)  \longrightarrow\omega$ such that if $\alpha<\kappa$ then $H_{\alpha
}\left(  s\right)  <H\left(  s\right)  $ for almost all $s\in Split\left(
p\right)  .$ Clearly $%
%TCIMACRO{\tbigcup \limits_{\alpha<\kappa}}%
%BeginExpansion
{\textstyle\bigcup\limits_{\alpha<\kappa}}
%EndExpansion
Catch_{\exists}\left(  H_{\alpha}\right)  \subseteq Catch_{\exists}\left(
H\right)  .$

\qquad\ \ \qquad\qquad\ \ \ \ \ 

Now we must prove that \textsf{cov}$\left(  \mathcal{K}\left(  p\right)
\right)  \leq\mathfrak{b}.$ Let $Split\left(  p\right)  =\left\{  s_{n}\mid
n<\omega\right\}  $ and $\mathcal{B}=\left\{  f_{\alpha}\mid\alpha
<\mathfrak{b}\right\}  \subseteq\omega^{\omega}$ be an unbounded family of
increasing functions. Given $\alpha<\mathfrak{b}$ define $H_{\alpha
}:Split\left(  p\right)  \longrightarrow\omega$ where $H_{\alpha}\left(
s_{n}\right)  =$ $f_{\alpha}\left(  n\right)  .$ We will show that $\left\{
Catch_{\exists}\left(  H_{\alpha}\right)  \mid\alpha<\mathfrak{b}\right\}  $
covers $\left[  p\right]  _{split}.$ Letting $f\in\left[  p\right]  $ define
$A=\left\{  n\mid s_{n}\sqsubseteq f\right\}  $ and construct the function
$g:A\longrightarrow\omega$ where $g\left(  n\right)  =f\left(  \left\vert
s_{n}\right\vert \right)  +1$ for every $n\in A.$ By the previous remark,
there is $\alpha<\mathfrak{b}$ such that $f_{\alpha}\upharpoonright A$ is not
dominated by $g\upharpoonright A.$ It is then clear that $S_{p}\left(
p,f\right)  \in Catch_{\exists}\left(  H_{\alpha}\right)  .$ \qquad
\ \ \qquad\ \ \ \qquad\ \ \ \ 
\end{proof}

\qquad\ \ \ \ \ \ \ \ \ \ \ \ \ \ \ \ \ \ \ \ \ \ \ \ \ \ \ \ \ \ \ \ \ \ \ \ \ \ \ \ \ \ \ \ \ \ \ \ \ \ \ 

Letting $p$ be a Miller tree and $S\in\left[  \omega\right]  ^{\omega},$ we
define the game $\mathcal{G}\left(  p,S\right)  $ as follows:

\qquad\ \ 

\begin{center}%
\begin{tabular}
[c]{|l|l|l|l|l|l|}\hline
$\mathsf{I}$ & $s_{0}$ &  & $s_{1}$ &  & $\cdots$\\\hline
$\mathsf{II}$ &  & $r_{0}$ &  & $r_{1}$ & \\\hline
\end{tabular}

\end{center}

\qquad\ \ \qquad\ 

\begin{enumerate}
\item Each $s_{i}$ is a splitting node of $p.$

\item $r_{i}\in\omega.$

\item $s_{i+1}$ extends $s_{i}.$

\item $s_{i+1}\left(  \left\vert s_{i}\right\vert \right)  \in S$ and is
bigger than $r_{i}.$
\end{enumerate}

\qquad\ \ \ \ \ 

Player $\mathsf{I}$ wins the game if she can continue playing for infinitely
many rounds. Given $S\in\left[  \omega\right]  ^{\omega}$ we denote by
$Hit\left(  S\right)  $ as the set of all subsets of $\omega$ that have
infinite intersection with $S.$

\begin{lemma}
Letting $p$ be a Miller tree and $S\in\left[  \omega\right]  ^{\omega}$, for
the game $\mathcal{G}\left(  p,S\right)  $ we have the following:

\begin{enumerate}
\item Player $I$ has a winning strategy if and only if there is $q\leq p$ such
that $\left[  q\right]  _{split}\subseteq\left[  S\right]  ^{\omega}.$

\item Player $II$ has a winning strategy if and only if there is
$H:Split\left(  p\right)  \longrightarrow\omega$ such that if $f\in\left[
p\right]  $ then the set $\left\{  f\upharpoonright n\in Split\left(
p\right)  \mid f\left(  n\right)  \in S\right\}  $ is almost contained in
$\left\{  f\upharpoonright n\in Split\left(  p\right)  \mid f\left(  n\right)
<H\left(  f\upharpoonright n\right)  \right\}  $ (in particular $\left[
p\right]  _{split}\cap Hit\left(  S\right)  \in\mathcal{K}\left(  p\right)  $).
\end{enumerate}
\end{lemma}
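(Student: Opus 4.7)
My plan is to prove the two parts separately, each having a straightforward direction and a subtler one.

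For (1) forward, given a winning strategy $\sigma$ for Player $\mathsf{I}$, I recursively build $q\leq p$ by unfolding $\sigma$ against every possible response of Player $\mathsf{II}$. Take $stem(q)=\sigma(\emptyset)$; at a splitting node $s$ of $q$ already placed (corresponding to some history $(r_0,\ldots,r_{i-1})$ of Player $\mathsf{II}$'s moves), for each $r\in\omega$ let $t_r=\sigma(r_0,\ldots,r_{i-1},r)$; the game rules give $t_r(|s|)\in S$ and $t_r(|s|)>r$, so $\{t_r(|s|):r\in\omega\}$ is an infinite subset of $S$. Declare this set to be $suc_q(s)$, and for each $v$ in it let the next splitting node of $q$ on the branch through $v$ be any $t_r$ with $t_r(|s|)=v$. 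Then every $f\in[q]$ satisfies $Sp(q,f)\subseteq S$. Conversely, if $q\leq p$ with $[q]_{split}\subseteq[S]^\omega$, every successor $v\in suc_q(s)$ at a splitting node $s\in Split(q)$ lies in $S$ (any branch of $q$ through $s^\frown\langle v\rangle$ witnesses this), so Player $\mathsf{I}$ wins by always playing splitting nodes of $q$ and choosing the next one whose new value at position $|s_i|$ exceeds $r_i$, which is possible since $suc_q(s_i)$ is infinite.

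For (2) backward: let Player $\mathsf{II}$ play $r_i=H(s_i)$. Were the play to last forever, $f=\bigcup_i s_i$ would be a branch of $p$ with infinitely many splitting-node positions $f\upharpoonright |s_i|$ satisfying $f(|s_i|)\in S$ and $f(|s_i|)>H(f\upharpoonright|s_i|)$, directly contradicting the almost-containment hypothesis.

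The subtler direction is (2) forward: converting a possibly history-dependent winning strategy for $\mathsf{II}$ into a positional bound $H$. The game $\mathcal{G}(p,S)$ is closed for Player $\mathsf{I}$ (whose winning condition is to play forever), so by Gale--Stewart the $\mathsf{II}$-winning positions carry an ordinal rank $\rho$. Critically, the game past any position $(s_0,r_0,\ldots,s_i)$ is determined entirely by $s_i$, since the future constraints involve only $suc_p(s_i),suc_p(s_{i+1}),\ldots$ and membership in $S$; hence $\rho(s_0,r_0,\ldots,s_i)$ depends only on $s_i$, and I write it $\rho(s)$. For each $s\in Split(p)$ with $\rho(s)$ defined, pick $g(s)\in\omega$ so that $\rho(s,g(s))$ is defined and strictly smaller than $\rho(s)$; this yields a positional winning strategy, since along any play $(s_0,s_1,\ldots)$ against $g$ the ordinals $\rho(s_i)$ would strictly decrease, which is impossible. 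Setting $H(s)=g(s)+1$ delivers the claim: if infinitely many splitting nodes $f\upharpoonright n$ on some $f\in[p]$ had $f(n)\in S$ and $f(n)\geq H(f\upharpoonright n)$, Player $\mathsf{I}$ could play exactly these splitting nodes against $g$ to secure an infinite play, contradicting the winning status of $g$. The principal obstacle is justifying the rank-depends-only-on-$s_i$ observation and confirming that $g$ wins; both are routine Gale--Stewart computations once the game tree past each position is recognized as being independent of past responses.
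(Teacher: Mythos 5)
Your proof is correct, and parts (1) and (2,$\Leftarrow$) coincide with what the paper's proof does (or leaves to the reader). The genuine difference is in (2,$\Rightarrow$). The paper obtains $H$ much more cheaply: given a winning strategy $\pi$ for $\mathsf{II}$, it notes that for a fixed $s\in Split\left(  p\right)$ there are only \emph{finitely many} $\pi$-consistent partial plays in which $\mathsf{I}$'s last move is $s$ --- since $\mathsf{I}$'s earlier moves must be among the finitely many splitting nodes of $p$ below $s$ and $\mathsf{II}$'s replies are then determined by $\pi$ --- and simply defines $H\left(  s\right)$ to exceed $\pi$'s reply in all of them; the ``let $\mathsf{I}$ follow the bad branch'' contradiction then closes the argument exactly as in your final step. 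You instead run the Gale--Stewart rank analysis for the open player, observe (correctly) that the subgame after a position depends only on $\mathsf{I}$'s last move so the rank is a function of $s_{i}$ alone, extract a positional winning strategy $g$, and set $H=g+1$. Both routes are sound and end in the same contradiction; yours makes the positionality of $\mathsf{II}$'s strategy explicit and conceptual, at the cost of importing ordinal ranks where a finite maximum over histories suffices, while the paper's finiteness observation keeps the lemma entirely elementary (it reserves the appeal to Gale--Stewart for the determinacy corollary that follows, not for this lemma).
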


\begin{proof}
The first equivalence is easy so we leave it for the reader. Now assume there
is a winning strategy $\pi$ for $II.$ We define $H:Split\left(  p\right)
\longrightarrow\omega$ such that if $s\in Split\left(  p\right)  $ then
$\pi\left(  \overline{x}\right)  <H\left(  s\right)  $ where $\overline{x}$ is
any partial play in which player $I$ has build $s$ and $II$ has played
according to $\pi$ (note there are only finitely many of those $\overline{x}$
so we can define $H\left(  s\right)  $). We want to prove that if $f\in\left[
p\right]  $ then $\left\{  f\upharpoonright n\in Split\left(  p\right)  \mid
f\left(  n\right)  \in S\right\}  $ is almost contained in the set $\left\{
f\upharpoonright n\in Split\left(  p\right)  \mid f\left(  n\right)  <H\left(
f\upharpoonright n\right)  \right\}  .$ Assume this is not the case. $\ $Let
$B$ be the set of all $n\in\omega$ such that $f\upharpoonright n\in
Split\left(  p\right)  $ with $f\left(  n\right)  \in S$ but $H\left(
f\upharpoonright n\right)  \leq f\left(  n\right)  .$ By our hypothesis $B$ is
infinite and then we enumerate it as $B=\left\{  b_{n}\mid n\in\omega\right\}
$ in increasing order. Consider the run of the game where $\mathsf{I}$ plays
$f\upharpoonright b_{n}$ at the $n$-th stage. This is possible since $f\left(
b_{n}\right)  \in S$ and $H\left(  f\upharpoonright b_{n}\right)  \leq
f\left(  b_{n}\right)  $ so $\mathsf{I}$ will win the game, which is a
contradiction. The other implication is easy.
\end{proof}

\qquad\ \ 

Since $\mathcal{G}\left(  p,S\right)  $ is an open game for $\mathsf{II}$ by
the Gale-Stewart theorem (see \cite{Kechris}) it is determined, so we conclude
the following dichotomy:\ \ \ \ \ \ \ \ \ \ \ \ \ \ 

\begin{corollary}
If $p$ is a Miller tree and $S\in\left[  \omega\right]  ^{\omega}$ then one
and only one of the following holds:

\begin{enumerate}
\item There is $q\leq p$ such that $\left[  q\right]  _{split}\subseteq\left[
S\right]  ^{\omega}.$

\item There is $H:Split\left(  p\right)  \longrightarrow\omega$ such that if
$f\in\left[  p\right]  $ then the set defined as $\left\{  f\upharpoonright
n\in Split\left(  p\right)  \mid f\left(  n\right)  \in S\right\}  $ is almost
contained in the following set: $\left\{  f\upharpoonright n\in Split\left(
p\right)  \mid f\left(  n\right)  <H\left(  f\upharpoonright n\right)
\right\}  $ (and $\left[  p\right]  _{split}\cap Hit\left(  S\right)
\in\mathcal{K}\left(  p\right)  $).
\end{enumerate}
\end{corollary}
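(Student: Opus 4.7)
The plan is to deduce the dichotomy directly from the previous lemma together with Gale--Stewart determinacy, so the work is essentially bookkeeping. First I would verify that $\mathcal{G}(p,S)$ is an open game from Player $\mathsf{II}$'s perspective. Player $\mathsf{I}$ wins the game exactly when she sustains legal play for infinitely many rounds, so Player $\mathsf{II}$ wins precisely when at some finite stage $\mathsf{I}$ has no legal move; since this event is witnessed by a finite initial segment of the play, the set of plays won by $\mathsf{II}$ is open in the natural product topology on the space of runs. Hence by the Gale--Stewart theorem the game is determined. Standard reasoning shows at most one player can have a winning strategy in any two-person game of perfect information, so exactly one of the two players wins.

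Second, I would translate each alternative via the previous lemma. A winning strategy for Player $\mathsf{I}$ yields, by clause (1) of the lemma, a Miller tree $q\leq p$ with $[q]_{split}\subseteq [S]^{\omega}$, which is exactly alternative (1) of the corollary. A winning strategy for Player $\mathsf{II}$ yields, by clause (2) of the lemma, a function $H:Split(p)\longrightarrow\omega$ such that for every $f\in[p]$ the set $\{n\mid f\restriction n\in Split(p)\wedge f(n)\in S\}$ is almost contained in $\{n\mid f\restriction n\in Split(p)\wedge f(n)<H(f\restriction n)\}$, which is alternative (2). Determinacy plus the mutual exclusivity of the two winning strategies gives the ``one and only one'' clause.

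Third, I would check the parenthetical claim that in case (2) one actually has $[p]_{split}\cap Hit(S)\in\mathcal{K}(p)$. Unpacking the definitions, fix $f\in[p]$ with $Sp(p,f)\in Hit(S)$; this says there are infinitely many $n$ with $f\restriction n\in Split(p)$ and $f(n)\in S$. The almost-containment given by $H$ forces all but finitely many such $n$ to satisfy $f(n)<H(f\restriction n)$, so infinitely many of them do, and therefore $Sp(p,f)\in Catch_{\exists}(H)$. Hence $[p]_{split}\cap Hit(S)\subseteq Catch_{\exists}(H)$, which by definition puts it in $\mathcal{K}(p)$.

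I do not anticipate any genuine obstacle; the only points requiring a little care are the topological openness of $\mathsf{II}$'s winning set (straightforward, since $\mathsf{II}$'s victory is determined at a finite stage) and citing the uniqueness half of determinacy to get exclusivity. Everything else is immediate from the lemma.
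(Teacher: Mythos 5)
Your proposal is correct and follows essentially the same route as the paper: the corollary is obtained by observing that $\mathcal{G}(p,S)$ is open for player $\mathsf{II}$, invoking Gale--Stewart determinacy, and translating each player's winning strategy through the two clauses of the preceding lemma. Your extra verifications (openness of $\mathsf{II}$'s winning set, exclusivity of winning strategies, and the $Catch_{\exists}(H)$ computation for the parenthetical) are all accurate and merely make explicit what the paper leaves to the reader.
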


\qquad\ \ \ 

In particular, for every Miller tree $p$ and $S\in\left[  \omega\right]
^{\omega}$ there is $q\leq p$ such that either $\left[  q\right]
_{split}\subseteq\left[  S\right]  ^{\omega}$\ or\ $\left[  q\right]
_{split}\subseteq\left[  \omega\setminus S\right]  ^{\omega}$ \ (although this
fact can be proved easier without the game).\ \ \ \ \ \ \ \ 

\begin{definition}
Let $p$ be a Miller tree and $S\in\left[  \omega\right]  ^{\omega}.$ We say
$S$ \emph{tree-splits }$p$ if there are Miller trees $q_{0},q_{1}\leq p$ such
that $\left[  q_{0}\right]  _{split}\subseteq\left[  S\right]  ^{\omega}$ and
$\left[  q_{1}\right]  _{split}\subseteq\left[  \omega\setminus S\right]
^{\omega}.$ $\mathcal{S}$ is a \emph{Miller tree-splitting family }if every
Miller tree is tree-split by some element of $\mathcal{S}.$\ 
\end{definition}

\qquad\ \ \ 

It is easy to see that every Miller-tree splitting family is a splitting
family and it is also easy to see that every $\omega$-splitting family is a
Miller-tree splitting family. We will now prove there is a Miller-tree
splitting family of size $\mathfrak{s}.$ I want to thank the referee for
supplying the following argument which is simpler than the original
one:\qquad\ \ \ \qquad\ \ \ \ \ \ \ \ 

\begin{proposition}
The smallest size of a Miller-tree splitting family is $\mathfrak{s}.$
\end{proposition}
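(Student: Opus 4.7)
My plan is to establish matching upper and lower bounds. The lower bound $\mathfrak{s}$ is immediate from the observation (recorded in the text just before the proposition) that every Miller-tree splitting family is a splitting family: given an infinite $X\subseteq\omega$, one fixes a Miller tree $p_X$ all of whose splitting-node successor sets lie in $X$, and then any $S$ tree-splitting $p_X$ yields $q_0,q_1\leq p_X$ whose splitting branches are infinite subsets of $X\cap S$ and $X\setminus S$ respectively, so $S$ splits $X$. Hence any Miller-tree splitting family has size at least $\mathfrak{s}$.

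For the upper bound I would split into two cases according to the relation between $\mathfrak{s}$ and $\mathfrak{b}$. If $\mathfrak{s}\geq\mathfrak{b}$, take a block-splitting family of size $\max(\mathfrak{b},\mathfrak{s})=\mathfrak{s}$ (Kamburelis--Weglorz). A standard reduction---given a countable $\{X_n\}$, form an interval partition $\{P_k\}$ with $P_k$ containing the $k$-th element of $X_n$ for each $n\leq k$---shows that any block-splitting family is $\omega$-splitting; and the excerpt already notes that every $\omega$-splitting family is Miller-tree splitting. Explicitly: given a Miller tree $p$ and an $\omega$-splitter $S$ of the countable family $\{suc_p(s):s\in Split(p)\}$, prune $p$ at each splitting node to keep only successors in $S$ (respectively $\omega\setminus S$), obtaining Miller subtrees $q_0,q_1\leq p$ with $[q_0]_{split}\subseteq[S]^{\omega}$ and $[q_1]_{split}\subseteq[\omega\setminus S]^{\omega}$.

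If $\mathfrak{s}<\mathfrak{b}$, take any splitting family $\mathcal{S}$ of size $\mathfrak{s}$ closed under complements, and suppose for contradiction that some Miller tree $p$ admits no tree-splitter from $\mathcal{S}$. For each $S\in\mathcal{S}$ the Gale--Stewart corollary above forces at least one of $Hit(S)\cap[p]_{split}$, $Hit(\omega\setminus S)\cap[p]_{split}$ into $\mathcal{K}(p)$; pick such a $T_S\in\{S,\omega\setminus S\}$. Because $\mathcal{S}$ is splitting and closed under complements, any infinite $X$ has infinite intersection with both $S$ and $\omega\setminus S$ for some $S\in\mathcal{S}$, and hence $X\in Hit(T_S)$. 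Applied to the (infinite) elements of $[p]_{split}$, this expresses $[p]_{split}$ as a union of $\mathfrak{s}$ sets from $\mathcal{K}(p)$, contradicting $\textsf{cov}(\mathcal{K}(p))=\mathfrak{b}>\mathfrak{s}$.

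The main obstacle is the second case: converting the failure of tree-splitting into smallness in $\mathcal{K}(p)$ relies on the game dichotomy, and then the contradiction only materialises through the identity $\textsf{cov}(\mathcal{K}(p))=\mathfrak{b}$ established earlier---so the delicate point is handling the Boolean bookkeeping (closure under complements, ensuring the chosen $T_S$ really cover all infinite subsets) cleanly. A secondary technical check is that the pruning in the first case preserves the Miller (super-perfect) property, which is exactly what the $\omega$-splitting property applied to $\{suc_p(s):s\in Split(p)\}$ guarantees, since each splitting node retains infinitely many successors on both sides of $S$.
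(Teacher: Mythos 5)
Your proof is correct and follows essentially the same route as the paper's: the same case split on $\mathfrak{b}\leq\mathfrak{s}$ versus $\mathfrak{s}<\mathfrak{b}$, using a block-/$\omega$-splitting family of size $\mathfrak{s}$ in the first case and the game dichotomy applied to an arbitrary splitting family in the second. The only cosmetic difference is that in the second case you invoke $\mathsf{cov}\left(\mathcal{K}\left(p\right)\right)=\mathfrak{b}$ from the earlier lemma rather than explicitly dominating the functions $H_{\alpha}$ and extracting a branch whose splitting values are almost avoided by every $S_{\alpha}^{i\left(\alpha\right)}$, but since that lemma is proved by exactly this domination argument the two proofs coincide.
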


\begin{proof}
We will construct a Miller-tree splitting family of size $\mathfrak{s.}$ In
case $\mathfrak{b\leq s}$ there is an $\omega$-splitting family of size
$\mathfrak{s}$ (see proposition 5 and remark after definition 6) and this is a
Miller tree-splitting family as remarked above.

\ \qquad\ \ \ \ 

Now assume $\mathfrak{s}<\mathfrak{b}.$ We will show that any splitting family
of size $\mathfrak{s}$ is a Miller tree-splitting family. We argue by
contradiction, let $\mathcal{S}=\left\{  S_{\alpha}\mid\alpha<\mathfrak{s}%
\right\}  $ be a splitting family which does not tree-splits the Miller tree
$p.$ In this way, for every $\alpha<\mathfrak{s}$ there is $i\left(
\alpha\right)  <2$ such that there is no $q\leq p$ for which $\left[
q\right]  _{split}\subseteq\lbrack S_{\alpha}^{i\left(  \alpha\right)
}]^{\omega}.$ By corollary 18, there is $H_{\alpha}:Split\left(  p\right)
\longrightarrow\longrightarrow\omega$\qquad\ such that for every $f\in\left[
p\right]  $ the following holds:

\qquad\ \qquad\ \qquad\ \qquad\ \qquad\ \qquad\ \qquad\ \ \qquad%

\begin{tabular}
[c]{l}%
$\left\{  f\upharpoonright n\in Split\left(  p\right)  \mid f\left(  n\right)
\in S\right\}  \subseteq^{\ast}\left\{  f\upharpoonright n\in Split\left(
p\right)  \mid f\left(  n\right)  <H\left(  f\upharpoonright n\right)
\right\}  $%
\end{tabular}

\qquad\ \ \ \ \ 

or equivalently:

\qquad\ \qquad\ \qquad\ \qquad\ \qquad\ \qquad\ \qquad\ \ \qquad%

\begin{tabular}
[c]{l}%
$\forall^{\infty}n\left(  f\upharpoonright n\in Split\left(  p\right)  \wedge
H_{\alpha}\left(  f\upharpoonright n\right)  \leq f\left(  n\right)
\longrightarrow f\left(  n\right)  \in S_{\alpha}^{1-i\left(  \alpha\right)
}\right)  $%
\end{tabular}

\qquad\ \ \qquad\ \ \qquad\ \ 

Since $\mathfrak{s<b}$ there exists $H:Split\left(  p\right)  \longrightarrow
\omega$ dominating each $H_{\alpha}.$ Take $f\in\left[  p\right]  $ such that
for every $n\in\omega$ if $f\upharpoonright n\in Split\left(  p\right)  $ then
$f\left(  n\right)  >H\left(  f\upharpoonright n\right)  .$ Then,

\qquad\ \qquad\ \qquad\ \qquad\ \qquad\ \qquad\ \qquad\ \ \qquad%

\begin{tabular}
[c]{l}%
$\forall\alpha<\mathfrak{s}\forall^{\infty}n\left(  f\upharpoonright n\in
Split\left(  p\right)  \longrightarrow f\left(  n\right)  \in S_{\alpha
}^{1-i\left(  \alpha\right)  }\right)  $%
\end{tabular}

\qquad\ \ \qquad\ \ \ \ \ \ 

Let $X=\left\{  f\left(  n\right)  \mid f\upharpoonright n\in Split\left(
p\right)  \right\}  $, note that $X\subseteq^{\ast}S_{\alpha}^{1-i\left(
\alpha\right)  }$ for every $\alpha<\mathfrak{s}.$ But this contradicts that
$\mathcal{S}$ was a splitting family.\ \ 
\end{proof}

\qquad\ \qquad\ \qquad\ \ \ \ \ 

The following lemma is probably well known:

\begin{lemma}
Assume $\kappa<\mathfrak{d}$ and for every $\alpha<\kappa$ let $\mathcal{F}%
_{\alpha}\subseteq\left[  \omega\right]  ^{<\omega}$ be an infinite set of
disjoint finite subsets of $\omega$ and $g_{\alpha}:\bigcup\mathcal{F}%
_{\alpha}\longrightarrow\omega.$ Then there is $f:\omega\longrightarrow\omega$
such that for every $\alpha<\kappa$ there are infinitely many $X\in
\mathcal{F}_{\alpha}$ such that $g_{\alpha}\upharpoonright X<f\upharpoonright
X.$
\end{lemma}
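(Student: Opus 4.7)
The plan is to reduce the lemma to the defining property of $\mathfrak{d}$: any family of fewer than $\mathfrak{d}$ functions from $\omega$ to $\omega$ fails to be dominating. Concretely, I will attach to each $\alpha$ a single function $\Phi_\alpha \in \omega^\omega$ encoding the values of $g_\alpha$ block-by-block, use $\kappa < \mathfrak{d}$ to find an $F : \omega \to \omega$ that is not dominated by any $\Phi_\alpha$, and then ``smear'' $F$ into the desired $f$ via a running maximum.

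First, I would enumerate each $\mathcal{F}_\alpha$ in increasing order as $\{X_n^\alpha : n \in \omega\}$ with $\max X_n^\alpha < \min X_{n+1}^\alpha$; this is possible because the members of $\mathcal{F}_\alpha$ are pairwise disjoint finite sets. A trivial induction then yields the key inequality $\min X_n^\alpha \ge n$, which is the only structural property of the enumeration that will be used. Next, define $\Phi_\alpha : \omega \to \omega$ by $\Phi_\alpha(n) = \max\{g_\alpha(j) : j \in X_n^\alpha\}$. Since $|\{\Phi_\alpha : \alpha < \kappa\}| = \kappa < \mathfrak{d}$, this family is not dominating, so there exists $F \in \omega^\omega$ such that $\{n \in \omega : F(n) > \Phi_\alpha(n)\}$ is infinite for every $\alpha < \kappa$. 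I would then set $f(k) = \max\{F(j) : j \le k\}$. Whenever $k \in X_n^\alpha$, the inequality $n \le \min X_n^\alpha \le k$ gives $f(k) \ge F(n)$, so for each of the infinitely many $n$ with $F(n) > \Phi_\alpha(n)$ and every $k \in X_n^\alpha$,
\[
f(k) \;\ge\; F(n) \;>\; \Phi_\alpha(n) \;\ge\; g_\alpha(k),
\]
yielding $g_\alpha \upharpoonright X_n^\alpha < f \upharpoonright X_n^\alpha$ for the required infinitely many blocks.

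The \emph{only} conceptual obstacle is the mismatch between the ``pointwise'' flavour of $\mathfrak{d}$ and the ``block-wise'' statement of the lemma: non-domination merely produces an $F$ that beats $\Phi_\alpha$ at isolated coordinates $n$, whereas the lemma demands $f > g_\alpha$ simultaneously at every coordinate inside a whole block $X_n^\alpha$. The running-maximum definition of $f$ bridges this gap precisely because the chosen enumeration forces every element of $X_n^\alpha$ to lie at position $\ge n$, so the isolated gain $F(n) > \Phi_\alpha(n)$ automatically propagates to every point of the block.
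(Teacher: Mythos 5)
Your proof is correct and follows essentially the same route as the paper's: both reduce the statement to the fact that $\kappa<\mathfrak{d}$ many functions cannot form a dominating family, and both convert the resulting pointwise win at a coordinate $n$ into a win on the whole $n$-th block by taking $f$ monotone and using that the $n$-th block lies entirely to the right of position $n$ (the paper packages this via an interval partition $\mathcal{P}_\alpha$ and the auxiliary functions $\overline{g}_\alpha$, rather than via your inequality $\min X_n^\alpha\ge n$). One small slip: an infinite family of pairwise disjoint finite sets need not admit an enumeration with $\max X_n^\alpha<\min X_{n+1}^\alpha$ (consider $\{0,2\},\{1,3\},\{4,6\},\{5,7\},\dots$), but since, as you yourself observe, the only property used is $\min X_n^\alpha\ge n$, it suffices to enumerate the blocks in increasing order of their minima, which are pairwise distinct by disjointness.
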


\begin{proof}
Given $\alpha<\kappa$ find an interval partition $\mathcal{P}_{\alpha
}=\left\{  P_{\alpha}\left(  n\right)  \mid n\in\omega\right\}  $ such that
for every $n\in\omega$ there is $X\in\mathcal{F}_{\alpha}$ such that
$X\subseteq P_{\alpha}\left(  n\right)  $ (this is possible since
$\mathcal{F}_{\alpha}$ is infinite and its elements are pairwise disjoint).
Then define the function $\overline{g}_{\alpha}:\omega\longrightarrow\omega$
such that $\overline{g}_{\alpha}\upharpoonright P_{\alpha}\left(  n\right)  $
is the constant function $max\left\{  g_{\alpha}\left[  P_{\alpha}\left(
n+1\right)  \right]  \right\}  .$ Since $\kappa$ is smaller than
$\mathfrak{d},$ we can then find an increasing function $f:\omega
\longrightarrow\omega$ that is not dominated by any of the $\overline
{g}_{\alpha}.$ It is easy to prove that $f$ has the desired property.
\end{proof}

\qquad\ \ \ \ 

Now we can prove the following lemma that will be useful:

\begin{lemma}
Let $q$ be a Miller tree and $\kappa<\mathfrak{d}.$ If $\left\{  H_{\alpha
}\mid\alpha<\kappa\right\}  \subseteq\omega^{Split\left(  q\right)  }$ then
there is $r\leq q$ such that $Split\left(  r\right)  =Split\left(  q\right)
\cap r$ and $\left[  r\right]  _{split}\cap%
%TCIMACRO{\dbigcup \limits_{\alpha<\kappa}}%
%BeginExpansion
{\displaystyle\bigcup\limits_{\alpha<\kappa}}
%EndExpansion
Catch_{\forall}\left(  H_{\alpha}\right)  =\emptyset$.
\end{lemma}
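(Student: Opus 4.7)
My plan is to apply Lemma 22 with a family of finite disjoint subsets of $Split(q)$ derived from the $Next$-structure of $q$'s splitting tree, extract a threshold function $F$ from its conclusion, and then prune $q$ to a subtree $r$ whose branches are forced to catch each $H_\alpha$ infinitely often at splitting positions. The clause $Split(r) = Split(q) \cap r$ comes for free because the pruning never introduces new splittings; I just have to keep each $suc_r(s)$ infinite.

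For each $s \in Split(q)$, let $Next(s)$ denote the set of splitting nodes $t \in Split(q)$ properly extending $s$ with no splitting node strictly between $s$ and $t$. I will fix an enumeration $m : Split(q) \to \omega$, and for each $s$ list $Next(s)$ as $\{t^s_0, t^s_1, \ldots\}$ in increasing order of $t^s_k(|s|)$. I will then partition $Next(s)$ into finite chunks $\{P^s_k : k \in \omega\}$ of increasing size. Because every splitting node other than the root belongs to a unique $Next(s)$, the collection $\mathcal{F} = \{P^s_k : s \in Split(q),\, k \in \omega\}$ is an infinite family of pairwise disjoint finite subsets of $\omega$ (via $m$). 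Setting $\mathcal{F}_\alpha = \mathcal{F}$ and $g_\alpha(m(t)) = H_\alpha(s)$ for $t \in P^s_k \subseteq Next(s)$, Lemma 22 (using $\kappa < \mathfrak{d}$) produces $F : \omega \to \omega$ such that for each $\alpha$ there are infinitely many chunks $P^s_k$ on which $F > g_\alpha$ uniformly, i.e.\ $F(m(t)) > H_\alpha(s)$ for every $t \in P^s_k$.

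I will then use $F$ to prune $q$. At each $s \in Split(q)$ I set $suc_r(s) = \{t(|s|) : t \in Next(s),\, t(|s|) \geq F(m(t))\}$. Because $t^s_k(|s|) \to \infty$ as $k \to \infty$, a suitable choice of the chunking and the enumeration $m$ will make this set cofinite in $suc_q(s)$, so each surviving splitting node of $q$ retains infinite branching in $r$. Any branch $f \in [r]$ passes through splitting nodes $s_0 \sqsubset s_1 \sqsubset \cdots$ with $s_{n+1} \in Next(s_n)$, and by construction $f(|s_n|) = s_{n+1}(|s_n|) \geq F(m(s_{n+1}))$; whenever $s_{n+1}$ falls inside a chunk $P^{s_n}_k$ on which $F > g_\alpha$, I get $f(|s_n|) > H_\alpha(s_n)$, a catch at level $s_n$.

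The main obstacle will be verifying that every branch of $r$ hits an $\alpha$-good chunk at infinitely many splitting levels for each $\alpha$, rather than merely knowing globally that such good chunks exist infinitely often in the tree. Lemma 22 delivers the latter, but a branch visits only one element of $Next(s)$ per splitting level and could in principle avoid the good chunks eventually. To close this gap I will need the chunking scheme and the pruning rule to be engineered together --- for instance, so that within each $Next(s)$ the $\alpha$-good chunks come to cofinitely fill $Next(s)$ after some index, forcing the branch of the pruned tree to be pigeonholed into them. Making this compatibility precise is the combinatorial heart of the argument, and it is exactly why Lemma 22 is needed in its chunked ``$F$ beats $g_\alpha$ on an entire finite set'' form rather than the weaker ``$\kappa < \mathfrak{d}$ non-domination'' alone.
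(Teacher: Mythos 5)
There is a genuine gap here, and it is exactly the one you flag in your last paragraph: nothing forces a branch of $r$ to pass through an $\alpha$-good chunk at infinitely many of \emph{its own} splitting levels. Lemma 22 applied to your family $\mathcal{F}=\{P^{s}_{k}\}$ only says that infinitely many good chunks exist somewhere in the tree; since a branch meets exactly one element of $Next(s)$ per splitting level, those good chunks may all live in parts of the tree the branch never visits (e.g.\ all inside $Next(s)$ for $s$ extending $\left\langle 0\right\rangle$ while the branch goes through $\left\langle 1\right\rangle$), or may be precisely the low chunks that your pruning rule $t(\left\vert s\right\vert)\geq F(m(t))$ deletes. The repair you gesture at (making the good chunks cofinite in each $Next(s)$) does not close the gap either: the surviving bad nodes can be nonempty at every level, and an adversarial branch simply selects one of them each time; moreover you cannot make all survivors good for all $\alpha$ simultaneously, since for infinite $\kappa$ the set $\{H_{\alpha}(s)\mid\alpha<\kappa\}$ may be unbounded in $\omega$. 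The structural problem is that your disjoint finite sets are ones that branches are under no obligation to meet, so the ``$F$ beats $g_{\alpha}$ on a whole finite set'' conclusion of Lemma 22 never gets transferred to a branch. Note also that your argument never uses the $\forall^{\infty}$ shape of $Catch_{\forall}$, which is the only leverage available once $\kappa\geq\mathfrak{b}$ (for $\kappa<\mathfrak{b}$ one could just dominate all the $H_{\alpha}$ and prune once).

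The paper closes the gap by exploiting that shape. If $Sp(q,f)\in Catch_{\forall}(H_{\alpha})$, then past some splitting node $t$ the branch $f$ always chooses values below $H_{\alpha}$, hence $f$ is a branch of the \emph{finitely branching} subtree $T(t,\alpha)\subseteq q$ of all such choices. The families fed to Lemma 22 are the splitting levels $\mathcal{F}(t,\alpha)=\{Split_{n}(q)\cap T(t,\alpha)\mid n<\omega\}$ of these trees, indexed by the $\left\vert Split(q)\right\vert\cdot\kappa<\mathfrak{d}$ pairs $(t,\alpha)$; finite branching is what compels a trapped branch to cross \emph{every} level, in particular infinitely many levels on which $G$ beats $H_{\alpha}$, giving $f(n)<H_{\alpha}(f\upharpoonright n)<G(f\upharpoonright n)$ infinitely often. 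This proves $\bigcup_{\alpha<\kappa}Catch_{\forall}(H_{\alpha})\subseteq Catch_{\exists}(G)$ for a single $G$, after which the pruning is the trivial one, $suc_{r}(s)=suc_{q}(s)\setminus G(s)$, whose branches obviously avoid $Catch_{\exists}(G)$. The lesson for your write-up: choose the disjoint finite sets so that the branches you want to exclude are \emph{forced} to meet all of them, and it is membership in $Catch_{\forall}(H_{\alpha})$ --- not the geometry of $Next(s)$ --- that supplies the forcing.
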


\begin{proof}
We will first prove there is $G:Split\left(  q\right)  \longrightarrow\omega$
such that $%
%TCIMACRO{\dbigcup \limits_{\alpha<\kappa}}%
%BeginExpansion
{\displaystyle\bigcup\limits_{\alpha<\kappa}}
%EndExpansion
Catch_{\forall}\left(  H_{\alpha}\right)  $ is a subset of $Catch_{\exists
}\left(  G\right)  .$ Given $t\in Split\left(  q\right)  $ let $T\left(
t,\alpha\right)  $ the subtree of $q$ such that if $f\in\left[  T\left(
t,\alpha\right)  \right]  $ then $t\sqsubseteq f$ and if $t\sqsubseteq
f\upharpoonright n$ and $f\upharpoonright n\in Split\left(  q\right)  $ then
$f\left(  n\right)  \in H_{\alpha}\left(  f\upharpoonright n\right)  .$
Clearly $T\left(  t,\alpha\right)  $ is a finitely branching subtree of $q.$
Then define $\mathcal{F}\left(  t,\alpha\right)  =\left\{  Split_{n}\left(
q\right)  \cap T\left(  t,\alpha\right)  \mid n<\omega\right\}  $ which is an
infinite collection of pairwise disjoint finite sets and let $g_{\left(
t,\alpha\right)  }:\bigcup\mathcal{F}\left(  t,\alpha\right)  \longrightarrow
\omega$ given by $g_{\left(  t,\alpha\right)  }\left(  s\right)  =H_{\alpha
}\left(  s\right)  .$ Since $\kappa<\mathfrak{d}$ by the previous lemma, we
can find $G:Split\left(  q\right)  \longrightarrow\omega$ such that if
$\alpha<\kappa$ and $t\in Split\left(  q\right)  $ then there are infinitely
many $Y\in\mathcal{F}\left(  t,\alpha\right)  $ such that $g_{\left(
t,\alpha\right)  }\upharpoonright Y<G\upharpoonright Y.$ We will now prove
that $%
%TCIMACRO{\dbigcup \limits_{\alpha<\kappa}}%
%BeginExpansion
{\displaystyle\bigcup\limits_{\alpha<\kappa}}
%EndExpansion
Catch_{\forall}\left(  H_{\alpha}\right)  \subseteq Catch_{\exists}\left(
G\right)  .$ Let $\alpha<\kappa$ and $f\in Catch_{\forall}\left(  H_{\alpha
}\right)  .$ Find $t\in Split\left(  q\right)  $ such that $t\sqsubseteq f$
and if $t\sqsubseteq f\upharpoonright m$ and $f\upharpoonright m\in
Split\left(  q\right)  $ then $f\left(  m\right)  \in H_{\alpha}\left(
f\upharpoonright m\right)  .$ Note that $f$ is a branch through $T\left(
t,\alpha\right)  .$ Let $Y\in\mathcal{F}\left(  t,\alpha\right)  $ such that
$g_{\left(  t,\alpha\right)  }\upharpoonright Y<G\upharpoonright Y$ and since
$f\in\left[  T\left(  t,\alpha\right)  \right]  $, there is $n\in\omega$ such
that $f\upharpoonright n\in Y$ so $f\left(  n\right)  <H_{\alpha}\left(
f\upharpoonright n\right)  <G\left(  f\upharpoonright n\right)  .$

\qquad\ \ \ \ 

Define $r\leq q$ such that $Split\left(  r\right)  =Split\left(  q\right)
\cap r$ and $suc_{r}\left(  s\right)  =suc_{q}\left(  s\right)  \setminus
G\left(  s\right)  .$ Clearly $\left[  r\right]  _{split}$ is disjoint from
$Catch_{\exists}\left(  G\right)  .$
\end{proof}

\qquad\ \ \ 

We can then finally prove our main theorem.

\begin{theorem}
There is a $+$-Ramsey \textsf{MAD} family.
\end{theorem}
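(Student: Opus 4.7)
The plan is a case split on whether $\mathfrak{a}<\mathfrak{d}$. If $\mathfrak{a}<\mathfrak{d}$, the preceding corollary of Hru\v{s}\'ak already suffices: every \textsf{MAD} family of size less than $\mathfrak{d}$ is $+$-Ramsey, and one of size $\mathfrak{a}$ exists in \textsf{ZFC}. So from now on assume $\mathfrak{d}\leq\mathfrak{a}$; combined with the \textsf{ZFC} inequality $\mathfrak{s}\leq\mathfrak{d}$ this gives $\mathfrak{s}\leq\mathfrak{a}$, and Shelah's construction (Theorem 9) becomes available.

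In that case, the plan is to rerun the $\mathfrak{c}$-length Shelah recursion with two upgrades. First, use a Miller tree-splitting family $\{S_{\alpha}\mid\alpha<\mathfrak{s}\}$ (Proposition 20) in place of the $(\omega,\omega)$-splitting family. Second, fix an enumeration $\{p_{\alpha}\mid\alpha<\mathfrak{c}\}$ of all Miller trees, and at stage $\delta$ arrange that $A_{\delta}$ is realised as the splitting-image $Sp(r,f)$ of a branch of a suitable refinement $r\leq p_{\delta}$. Exhausting every Miller tree in this manner makes the resulting $\mathcal{A}$ Miller-indestructible, and Proposition 12 then upgrades Miller-indestructibility to $+$-Ramsey.

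At a typical stage $\delta$ I would run Shelah's construction up to the point where one has $Y\in\mathcal{I}(\mathcal{A}_{\delta})^{+}$ and a family $\mathcal{D}=W\cup\bigcup_{\xi<\beta}F_{\xi}$ of size strictly less than $\mathfrak{s}$, and only then preprocess $p_{\delta}$ by means of the $\mathcal{K}(p)$ machinery. For each $D\in\mathcal{D}$ (and also for $\omega\setminus Y$) I would invoke the game-theoretic dichotomy of Corollary 18: in the favourable alternative one gets a Miller subtree whose splitting-branches already avoid $D$, in the unfavourable alternative a function $H_{D}\colon Split(p_{\delta})\to\omega$ that locates the obstruction. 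Since $|\mathcal{D}|+1<\mathfrak{s}\leq\mathfrak{d}$, Lemma 22 combines these $H_{D}$'s into a single refinement $r\leq p_{\delta}$ whose splitting-branches are simultaneously controlled relative to every $D\in\mathcal{D}$ and to $\omega\setminus Y$. I then set $A_{\delta}=Sp(r,f)$ for an appropriate $f\in[r]$ and $\sigma_{\delta}=\eta_{f^{*}}$ (the Shelah node attached to the chosen $f$), so that Shelah's argument certifies that $A_{\delta}$ is \textsf{AD} with $\mathcal{A}_{\delta}$, while by design $A_{\delta}$ is realised as a branch of $p_{\delta}$, securing Miller-indestructibility against $p_{\delta}$.

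The main obstacle I expect is reconciling the three competing constraints on $A_{\delta}$: being \textsf{AD} with $\mathcal{A}_{\delta}$ via Shelah's $\eta$-encoding, being almost contained in $X_{\delta}$ for complete-separability-style bookkeeping, and arising as the splitting-image of a branch of a subtree of $p_{\delta}$. The pivotal point is that $|\mathcal{D}|<\mathfrak{s}\leq\mathfrak{d}$ places us exactly within the regime in which the $\mathcal{K}(p)$ machinery (Lemma 22) can shrink $p_{\delta}$ while preserving the combinatorial structure Shelah needs; this compatibility between Shelah's construction and the Miller-tree machinery is what makes the \textsf{ZFC} argument go through.
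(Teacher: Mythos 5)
Your opening case split is the same as the paper's (if $\mathfrak{a}<\mathfrak{d}$ use the corollary on small \textsf{MAD} families; otherwise $\mathfrak{s}\leq\mathfrak{d}\leq\mathfrak{a}$ and Shelah's machinery is available), and you correctly identify the relevant tools (the Miller tree-splitting family, the game dichotomy, the ideal $\mathcal{K}(p)$, Lemma 22). But the architecture of your second case has a genuine gap: you aim to make $\mathcal{A}$ \emph{Miller-indestructible} by arranging, for each ground-model Miller tree $p_{\delta}$, that some single $A_{\delta}$ equals $Sp(r,f)$ for one ground-model branch $f$ of some $r\leq p_{\delta}$. This does not yield Miller-indestructibility. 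Indestructibility requires that in the extension every infinite set named by the forcing still meets some member of $\mathcal{A}$ infinitely; the generic branch avoids every ground-model branch, so controlling one branch of one refinement of $p_{\delta}$ says nothing about the generic real, let alone about arbitrary names. There is no meaningful notion of being ``indestructible against $p_{\delta}$.'' Indeed the paper never claims to produce a Miller-indestructible family under $\mathfrak{s}\leq\mathfrak{a}$ (that would be a strictly stronger and, as far as this paper goes, unavailable conclusion); it only proves the implication Miller-indestructible $\Rightarrow$ $+$-Ramsey and then constructs a $+$-Ramsey family \emph{directly}.

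The paper's actual route, which your sketch is missing, is to split the limit stages into two classes: on one class it runs Shelah's completely separable construction verbatim (this is where the $X_{\delta}$ bookkeeping lives, so it never competes with the tree bookkeeping), and on the other it enumerates \emph{all} subtrees $T_{\delta}$ of $\omega^{<\omega}$ (not just Miller trees), passes to an $\mathcal{A}_{\delta}^{\perp}$-branching subtree $p\subseteq T_{\delta}$, and then runs a fusion $\{p(s)\mid s\in 2^{<\omega}\}$ of Miller trees with pairwise incompatible codes $\tau_{p(s)}$ so as to handle \emph{all} of $\mathcal{A}_{\delta}$ (which may have size $\geq\mathfrak{s}$, so your preprocessing against the small family $\mathcal{D}$ of size $<\mathfrak{s}$ cannot cover it); choosing $g\in 2^{\omega}$ whose code is avoided by every $\sigma_{\beta}$ produces a branch $f$ with $im(f)\in\mathcal{I}(\mathcal{A}_{\delta})^{+}$. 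Finally --- and this is the second point your proposal gets wrong --- one must place \emph{infinitely many} new sets $A_{\delta+n}$ inside $im(f)$ (by partitioning $im(f)$ into positive pieces), so that $im(f)\in\mathcal{I}(\mathcal{A})^{++}$ for the final family. Placing a single $A_{\delta}\subseteq im(f)$ is useless for $+$-Ramseyness: a single member of $\mathcal{A}$ lies in $\mathcal{I}(\mathcal{A})$, so $im(f)$ could still end up in the ideal generated by $A_{\delta}$ and finitely many other members added later.
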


\begin{proof}
If $\mathfrak{a<s},$ then $\mathfrak{a}$ is smaller than $\mathfrak{d}$ so
then there is a $+$-Ramsey \textsf{MAD} family (in fact, there is a
Miller-indestructible \textsf{MAD} family, see corollary 13). So we assume
$\mathfrak{s}\leq\mathfrak{a}$ for the rest of the proof. Fix an $\left(
\omega,\omega\right)  $-splitting family $\mathcal{S}=\left\{  S_{\alpha}%
\mid\alpha<\mathfrak{s}\right\}  $ that is also a Miller-tree splitting
family. Let $\left\{  L,R\right\}  $ be a partition of the limit ordinals
smaller than $\mathfrak{c}$ such that both $L$ and $R$ have size continuum.
Enumerate by $\left\{  X_{\alpha}\mid\alpha\in L\right\}  $ all infinite
subsets of $\omega$ and by $\left\{  T_{\alpha}\mid\alpha\in R\right\}  $ all
subtrees of $\omega^{<\omega}.$ We will recursively construct $\mathcal{A}%
=\left\{  A_{\xi}\mid\xi<\mathfrak{c}\right\}  $ and $\left\{  \sigma_{\xi
}\mid\xi<\mathfrak{c}\right\}  $ as follows:

\begin{enumerate}
\item $\mathcal{A}$ is an \textsf{AD} family and $\sigma_{\alpha}%
\in2^{<\mathfrak{s}}$ for every $\alpha<\mathfrak{c}.$

\item If $\sigma_{\alpha}\in2^{\beta}$ and $\xi<\beta$ then $A_{\alpha
}\subseteq^{\ast}S_{\xi}^{\sigma_{\alpha}\left(  \xi\right)  }.$

\item If $\alpha\neq\beta$ then $\sigma_{\alpha}\neq\sigma_{\beta}.$

\item If $\delta\in L$ and $X_{\delta}\in\mathcal{I}\left(  \mathcal{A}%
_{\delta}\right)  ^{+}$ then $A_{\delta+n}\subseteq X_{\delta}$ for every
$n\in\omega$ (where $\mathcal{A}_{\delta}=\left\{  A_{\xi}\mid\xi
<\delta\right\}  $).

\item If $\delta\in R$ and $T_{\delta}$ is an $\mathcal{I}\left(
\mathcal{A}_{\delta}\right)  ^{+}$-branching tree then there is $f\in\left[
T_{\delta}\right]  $ such that $A_{\delta+n}\subseteq im\left(  f\right)  $
for every $n\in\omega.$
\end{enumerate}

\qquad\ \ 

It is clear that if we manage to perform the construction then $\mathcal{A}$
will be a $\mathfrak{+}$-Ramsey \textsf{MAD} family (and it will be completely
separable too). Let $\delta$ be a limit ordinal and assume we have constructed
$A_{\xi}$ for every $\xi<\delta.$ In case $\delta\in L$ we just proceed as in
the case of the completely separable \textsf{MAD} family, so assume $\delta\in
R.$ Since $\mathcal{A}_{\delta}=\left\{  A_{\xi}\mid\xi<\delta\right\}  $ is
nowhere-\textsf{MAD }(recall that $\mathcal{A}_{\delta}$ is
nowhere-\textsf{MAD }by the proof of theorem 9) we can find $p$ an
$\mathcal{A}_{\delta}^{\perp}$-branching subtree of $T_{\delta}$ (recall that
$\mathcal{A}_{\delta}^{\perp}$ is the set of all infinite sets that are almost
disjoint with every element of $\mathcal{A}_{\delta}^{\perp}$).

\qquad\ \ 

First note that since $\mathcal{S}$ is a Miller-tree splitting family, for
every Miller tree $q$ there is $\alpha<\mathfrak{s}$ and $\tau_{q}\in
2^{\alpha}$ such that:

\begin{enumerate}
\item $S_{\alpha}$ tree-splits $q.$

\item If $\xi<\alpha$ then there is no $q^{\prime}\leq q$ such that $\left[
q^{\prime}\right]  _{split}\subseteq\lbrack S_{\xi}^{1-\tau_{q}\left(
\xi\right)  }]^{\omega}.$
\end{enumerate}

\qquad\ \ \ 

Note that if $q^{\prime}\leq q$ then $\tau_{q^{\prime}}$ extends $\tau_{q}.$
If $q\leq p$ and $\tau_{q}\in2^{\alpha}$ we fix the following items:

\begin{enumerate}
\item $W_{0}\left(  q\right)  =\left\{  \xi<\alpha\mid\exists\beta
<\delta\left(  \sigma_{\beta}=\tau_{q}\upharpoonright\xi\right)  \right\}  $
and\qquad\ \ \ \ \ \ 

$W_{1}\left(  q\right)  =\left\{  \xi<\alpha\mid\exists\beta<\delta\left(
\Delta\left(  \sigma_{\beta},\tau_{q}\right)  =\xi\right)  \right\}
.$\footnote{By $\Delta\left(  \sigma,\tau\right)  $ we denote the smallest
$\beta$ such that $\sigma\left(  \beta\right)  \neq\tau\left(  \beta\right)  $
in case $\sigma$ and $\tau$ are incomparable or if $\tau\upharpoonright
\beta=\sigma.$}

\item Let $\xi\in W_{0}\left(  q\right)  $ we then find $\beta$ such that
$\sigma_{\beta}=\tau_{q}\upharpoonright\xi$ and define $G_{q,\xi}:Split\left(
q\right)  \longrightarrow\omega$ such that if $s\in Split\left(  q\right)  $
then $A_{\beta}\cap suc_{q}\left(  s\right)  \subseteq G_{q,\xi}\left(
s\right)  $ (this is possible since $q$ is $\mathcal{A}_{\delta}^{\perp}$-branching).

\item Given $\xi$ $\in W_{1}\left(  q\right)  $ we know there is no
$q^{\prime}\leq q$ such that $\left[  q^{\prime}\right]  _{split}%
\subseteq\lbrack S_{\xi}^{1-\tau_{q}\left(  \xi\right)  }]^{\omega}.$ We know
that there is $H_{q,\xi}:Split\left(  q\right)  \longrightarrow\omega$ such
that if $f\in\left[  q\right]  $, the set defined as $\{f\upharpoonright n\in
Split\left(  q\right)  \mid f\left(  n\right)  \in S_{\xi}^{1-\tau_{q}\left(
\xi\right)  }\}$ is almost contained in the set $\left\{  f\upharpoonright
n\in Split\left(  q\right)  \mid f\left(  n\right)  <H_{q,\xi}\left(
f\upharpoonright n\right)  \right\}  .$

\item If $U\in\left[  W_{0}\left(  q\right)  \right]  ^{<\omega}$ and
$V\in\left[  W_{1}\left(  q\right)  \right]  ^{<\omega}$ choose any \qquad\ \ \ \ 

$J_{q,U,V}:Split\left(  q\right)  \longrightarrow\omega$ such that if $s\in
Split\left(  q\right)  $ then $J_{q,U,V}\left(  s\right)  >max\left\{
G_{q,\xi}\left(  s\right)  \mid\xi\in U\right\}  ,$ $max\left\{  H_{q,\xi
}\left(  s\right)  \mid\xi\in V\right\}  .$

\item $\mathcal{A}\left(  q\right)  =\left\{  A_{\xi}\in\mathcal{A}_{\delta
}\mid\tau_{q}\nsubseteq\sigma_{\xi}\right\}  .$
\end{enumerate}

\qquad\ \ \ \qquad\ \ \ \ \ \ \ \ \ \qquad\ \ \ \ \ 

Note that if $\xi\in W_{0}\left(  q\right)  $ then there is a unique
$\beta<\delta$ such that $\sigma_{\beta}=\tau_{q}\upharpoonright\xi$ (although
the analogous remark is not true for the elements of $W_{1}\left(  q\right)
$). The following claim will play a fundamental role in the proof:

\begin{claim}
If $q\leq p$ then there is $r\leq q$ such that $\left[  r\right]
_{split}\subseteq\mathcal{I}\left(  \mathcal{A}\left(  q\right)  \right)
^{+}$.
\end{claim}

\qquad\ \ \ \ 

Let $\alpha<\mathfrak{s}$ such that $\tau_{q}\in2^{\alpha}.$ Since
$\mathfrak{s\leq d}$, we know there is $r\leq q$ such that $\left[  r\right]
_{split}$ is disjoint from $\bigcup\left\{  Catch_{\forall}\left(
J_{q,U,V}\right)  \mid U\in\left[  W_{0}\left(  q\right)  \right]  ^{<\omega
},V\in\left[  W_{1}\left(  q\right)  \right]  ^{<\omega}\right\}  $ and
$Split\left(  r\right)  =Split\left(  q\right)  \cap r.$ We will now prove
$\left[  r\right]  _{split}\subseteq\mathcal{I}\left(  \mathcal{A}\left(
q\right)  \right)  ^{+}$ but assume this is not the case. Therefore, there is
$f\in\left[  r\right]  $ and $F\in\left[  \mathcal{A}\left(  q\right)
\right]  ^{<\omega}$ such that $X=Sp\left(  r,f\right)  \subseteq^{\ast
}\bigcup F.$ Let $F=F_{1}\cup F_{2}$ and $U\in\left[  W_{0}\left(  q\right)
\right]  ^{<\omega}$, $V\in\left[  W_{1}\left(  q\right)  \right]  ^{<\omega}$
such that for every $A_{\beta}\in F_{1}$ there is $\xi_{\beta}\in U$ such that
$\sigma_{\beta}=\tau_{q}\upharpoonright\xi_{\beta}$ and for every $A_{\gamma
}\in F_{2}$ there is $\eta_{\gamma}\in V$ such that $\Delta\left(  \tau
_{q},\sigma_{\gamma}\right)  =\eta_{\gamma}.$ Let $D\subseteq\left\{  n\mid
f\upharpoonright n\in Split\left(  r\right)  \right\}  $ be the (infinite) set
of all $n<\omega$ such that the following holds:

\qquad\ \ 

\begin{enumerate}
\item $f\upharpoonright n\in Split\left(  r\right)  $ and $f\left(  n\right)
\in\bigcup F.$

\item If $\eta_{\gamma}\in V$ then $A_{\gamma}\setminus n\subseteq
S_{\eta_{\gamma}}^{1-\tau_{q}\left(  \eta_{\gamma}\right)  }.$

\item $f\left(  n\right)  >J_{q,U,V}\left(  f\upharpoonright n\right)  .$

\item If $\eta\in V$ and $f\left(  n\right)  \in S_{\eta}^{1-\tau_{q}\left(
\eta\right)  }$ then $f\left(  n\right)  <H_{q,\eta}\left(  f\upharpoonright
n\right)  <J_{q,U,V}\left(  f\upharpoonright n\right)  $ (recall that
$\{f\upharpoonright m\in Split\left(  q\right)  \mid f\left(  m\right)  \in
S_{\eta}^{1-\tau_{q}\left(  \eta\right)  }\}$ is almost contained in $\left\{
f\upharpoonright m\in Split\left(  q\right)  \mid f\left(  m\right)
<H_{q,\eta}\left(  f\upharpoonright m\right)  \right\}  $).
\end{enumerate}

\qquad\ \ 

We first claim that if $n\in D,\xi_{\beta}\in U$ and $\eta_{\gamma}\in$ $V$
then $f\left(  n\right)  \notin A_{\beta}\cup S_{\eta_{\gamma}}^{1-\tau
_{q}\left(  \eta_{\gamma}\right)  }.$ On one hand, since $A_{\beta}\cap
suc_{q}\left(  f\upharpoonright n\right)  \subseteq G_{q,\xi_{\beta}}\left(
f\upharpoonright n\right)  <J_{q,U,V}\left(  f\upharpoonright n\right)  $ and
$f\left(  n\right)  >J_{q,U,V}\left(  f\upharpoonright n\right)  $ then
$f\left(  n\right)  \notin A_{\beta}.$ On the other hand, if it was the case
that $f\left(  n\right)  \in S_{\eta_{\gamma}}^{1-\tau_{q}\left(  \eta
_{\gamma}\right)  }$ so $f\left(  n\right)  <H_{q,\eta}\left(
f\upharpoonright n\right)  <J_{q,U,V}\left(  f\upharpoonright n\right)  $ but
we already know that $f\left(  n\right)  >J_{q,U,V}\left(  f\upharpoonright
n\right)  .$ Since $n\leq f\left(  n\right)  $ (recall every branch through
$p$ is increasing) $f\left(  n\right)  \notin A_{\gamma}$ for every
$\eta_{\gamma}\in V$ because $A_{\gamma}\setminus n\subseteq S_{\eta_{\gamma}%
}^{1-\tau_{q}\left(  \eta_{\gamma}\right)  }.$ This implies $f\left(
n\right)  \notin\bigcup F$ which is a contradiction and finishes the proof of
the claim.

\qquad\ \ 

Back to the proof of the theorem, we recursively build a tree of Miller trees
$\left\{  p\left(  s\right)  \mid s\in2^{<\omega}\right\}  $ with the
following properties:

\qquad\ \ \ 

\begin{enumerate}
\item $p\left(  \emptyset\right)  =p.$

\item $p\left(  s^{\frown}i\right)  \leq p\left(  s\right)  $ and the stem of
$p\left(  s^{\frown}i\right)  $ has length at least $\left\vert s\right\vert
.$

\item $\tau_{p\left(  s^{\frown}0\right)  }$ and $\tau_{p\left(  s^{\frown
}1\right)  }$ are incompatible.

\item $\left[  p\left(  s^{\frown}i\right)  \right]  _{split}\subseteq
\mathcal{I}\left(  \mathcal{A}\left(  p\left(  s\right)  \right)  \right)
^{+}.$
\end{enumerate}

\qquad\ \ \ 

This is easy to do with the aid of the previous claim. For every
$g\in2^{\omega}$ let $\tau_{g}=\bigcup\tau_{p\left(  g\upharpoonright
m\right)  }.$ Note that if $g_{1}\neq g_{2}$ then $\tau_{g_{1}}$ and
$\tau_{g_{2}}$ are two incompatible nodes of $2^{<\mathfrak{s}}.$ Since
$\mathcal{A}_{\delta}$ has size less than the continuum, there is
$g\in2^{\omega}$ such that there is no $\beta<\delta$ such that $\sigma
_{\beta}$ extends $\tau_{g}$ and then $\mathcal{A}_{\delta}=\bigcup
\limits_{m\in\omega}\mathcal{A}\left(  p\left(  g\upharpoonright m\right)
\right)  .$ Let $f$ be the only element of $\bigcap\limits_{m\in\omega}\left[
p\left(  g\upharpoonright m\right)  \right]  .$ Obviously, $f$ is a branch
through $p$ and we claim that $im\left(  f\right)  \in\mathcal{I}\left(
\mathcal{A}_{\delta}\right)  ^{+}.$ This is easy since if $A_{\xi_{1}%
},...,A_{\xi_{n}}\in\mathcal{A}_{\delta}$ then we can find $m<\omega$ such
that $A_{\xi_{1}},...,A_{\xi_{n}}\in\mathcal{A}\left(  p\left(
g\upharpoonright m\right)  \right)  $ and then we know that $Sp\left(
p\left(  g\upharpoonright m+1\right)  ,f\right)  \nsubseteq^{\ast}A_{\xi_{1}%
}\cup...\cup A_{\xi_{n}}$ and since $Sp\left(  p\left(  g\upharpoonright
m+1\right)  ,f\right)  $ is contained in $im\left(  f\right)  $ we conclude
that $im\left(  f\right)  \in\mathcal{I}\left(  \mathcal{A}_{\delta}\right)
^{+}.$

\qquad\ \ 

Finally, find a partition $\left\{  Z_{n}\mid n\in\omega\right\}
\subseteq\mathcal{I}\left(  \mathcal{A}_{\delta}\right)  ^{+}$ of $im\left(
f\right)  $ and using the method of Shelah construct $A_{\delta+n}$ such that
$A_{\delta+n}\subseteq Z_{n}.$ This finishes the proof.
\end{proof}

\qquad\ \ \ 

\section*{More constructions}

In this last section, we will show the relationship between $+$-Ramsey and
other properties of \textsf{MAD }families. Recall than ideal $\mathcal{I}$ in
$\omega$ is \emph{tall }if for every $X\in\left[  \omega\right]  ^{\omega}$
there is $Y\in\mathcal{I}$ such that $X\cap Y$ is infinite. Note that if
$\mathcal{A}$ is an \textsf{AD }family then $\mathcal{I}\left(  \mathcal{A}%
\right)  $ is tall if and only if $\mathcal{A}$ is \textsf{MAD}. Note the
proof of theorem 23 in fact gives the following result:

\begin{corollary}
[$\mathfrak{s\leq a}$]If $\mathcal{I}$ is a tall ideal, then there is a
$+$-Ramsey \textsf{MAD }family $\mathcal{A}$ such that $\mathcal{A}%
\subseteq\mathcal{I}.$
\end{corollary}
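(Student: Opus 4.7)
The plan is to rerun the construction of Theorem~23 essentially verbatim, inserting a single extra use of tallness at the moment when each new element $A_\delta$ (or $A_{\delta+n}$) is extracted from a positive set, so that the chosen set ends up inside $\mathcal{I}$.

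The key observation is the following. Throughout the construction of Theorem~23 the family $\mathcal{A}_\delta$ remains nowhere \textsf{MAD} (as explicitly noted in that proof). So whenever $Y\in\mathcal{I}(\mathcal{A}_\delta)^+$, nowhere-\textsf{MAD}ness yields $Y'\in[Y]^\omega$ almost disjoint from $\mathcal{A}_\delta$, and then tallness of $\mathcal{I}$ yields $Z\in\mathcal{I}$ with $W:=Y'\cap Z$ infinite. The set $W$ is an infinite subset of $Y$, is almost disjoint from $\mathcal{A}_\delta$, hence lies in $\mathcal{I}(\mathcal{A}_\delta)^+$, and belongs to $\mathcal{I}$. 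Intuitively: nowhere-\textsf{MAD}ness gives us a ``free'' infinite subset of $Y$, and tallness lets us push it into $\mathcal{I}$ without losing infiniteness or positivity.

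Now carry out the construction of Theorem~23 with the same $(\omega,\omega)$-and-Miller-tree-splitting family $\mathcal{S}$, the same partition $L\cup R$ of the limit ordinals, the same bookkeeping requirements on $\{\sigma_\xi\}$, and all the tree-splitting machinery intact. The only modification is at the very end of each limit step: in the $L$ case, once Shelah's method has produced the pseudo-intersection $Y\in\mathcal{I}(\mathcal{A}_\delta)^+$, apply the observation to obtain $W\in\mathcal{I}\cap[Y]^\omega\cap\mathcal{I}(\mathcal{A}_\delta)^+$ and then choose $A_\delta\in[W]^\omega$ almost disjoint from $\mathcal{D}$ using $|\mathcal{D}|<\mathfrak{s}\leq\mathfrak{a}$ (note that $W\!\upharpoonright\!\mathcal{D}$ is still not \textsf{MAD} since $|\mathcal{D}|<\mathfrak{a}$); in the $R$ case, after partitioning $\operatorname{im}(f)$ into $\{Z_n\}$, do the analogous replacement inside each $Z_n$ before picking $A_{\delta+n}$. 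Since $A_\delta\subseteq W$ and $W\in\mathcal{I}$, we have $A_\delta\in\mathcal{I}$, so the final family satisfies $\mathcal{A}\subseteq\mathcal{I}$.

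The main thing to verify is that this local substitution does not break any of the other invariants of the Theorem~23 construction. Because $W\subseteq Y$, every property that was proved from $A_\delta\subseteq Y$ still holds: $A_\delta\subseteq^\ast S_\xi^{\sigma_\delta(\xi)}$ for $\xi<\mathrm{dom}(\sigma_\delta)$, the assignment of $\sigma_\delta$ is identical, and the almost disjointness of $A_\delta$ with all of $\mathcal{A}_\delta$ follows exactly as before (since $A_\delta$ is almost disjoint from the same auxiliary family $\mathcal{D}$). The complete-separability clause $A_\delta\subseteq X_\delta$ (in the $L$ case) and the $+$-Ramsey clause $A_{\delta+n}\subseteq\operatorname{im}(f)$ (in the $R$ case) are also preserved, since $W$ sits inside $Y\subseteq^\ast X_\delta$ or inside $Z_n\subseteq\operatorname{im}(f)$. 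Hence the construction produces a $+$-Ramsey \textsf{MAD} family contained in $\mathcal{I}$, as required.
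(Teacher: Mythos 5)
Your proof is correct and is essentially the paper's own argument: the paper proves this corollary only by remarking that the construction of Theorem~23 ``in fact gives'' it, the implicit content being exactly your observation that at each extraction step one may first use tallness to shrink the candidate positive set to an infinite subset lying in $\mathcal{I}$ without disturbing any of the invariants. (A minor remark: once $W$ is almost disjoint from all of $\mathcal{A}_{\delta}$, the additional appeal to $\left\vert \mathcal{D}\right\vert <\mathfrak{a}$ is redundant, since $W$ itself could serve as $A_{\delta}$.)
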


\qquad\ \ 

The following are properties of \textsf{MAD }families that have been studied
in the literature:

\begin{definition}
Let $\mathcal{A}$ be a \textsf{MAD} family.

\begin{enumerate}
\item $\mathcal{A}$ is $\mathbb{P}$\emph{-indestructible} if $\mathcal{A}$
remains \textsf{MAD} after forcing with $\mathbb{P}$ (we are mainly interested
where $\mathbb{P}$ is Cohen, random, Sacks or Miller forcing).

\item $\mathcal{A}$ is \emph{weakly tight }if for every $\left\{  X_{n}\mid
n\in\omega\right\}  \subseteq\mathcal{I}\left(  \mathcal{A}\right)  ^{+}$
there is $B\in\mathcal{I}\left(  \mathcal{A}\right)  $ such that $\left\vert
B\cap X_{n}\right\vert =\omega$ for infinitely many $n\in\omega.$\qquad

\item $\mathcal{A}$ is\ \emph{tight }if for every $\left\{  X_{n}\mid
n\in\omega\right\}  \subseteq\mathcal{I}\left(  \mathcal{A}\right)  ^{+}$
there is $B\in\mathcal{I}\left(  \mathcal{A}\right)  $ such that $B\cap X_{n}$
is infinite for every $n\in\omega.$

\item $\mathcal{A}$ is \emph{Laflamme }if $\mathcal{A}$ can not be extended to
an $F_{\sigma}$-ideal.

\item $\mathcal{A}$ is $+$\emph{-Ramsey} if for every $\mathcal{I}\left(
\mathcal{A}\right)  ^{+}$-branching tree $T,$ there is $f\in\left[  T\right]
$ such that $im\left(  f\right)  \in\mathcal{I}\left(  \mathcal{A}\right)
^{+}.$
\end{enumerate}
\end{definition}

\qquad\ \ \qquad\ \ \qquad\ \qquad\ \ \ \ 

It is known that tightness implies both weak tightness and Cohen
indestructibility (see \cite{OrderingMADfamiliesalaKatetov}). It is also easy
to see that Cohen indestructibility implies Miller indestructibility and Sacks
indestructibility is weaker than both Miller indestructibility and random
indestructibility (see\cite{ForcingIndestructibilityofMADFamilies}).

\begin{corollary}
[$\mathfrak{s\leq a}$]There is a $+$-Ramsey \textsf{MAD }family that is not
Sacks indestructible, Laflamme or weakly tight.
\end{corollary}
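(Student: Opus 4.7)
The plan is to apply Corollary 26 with a carefully chosen tall $F_\sigma$ ideal and read off the three negative properties from the resulting containment, augmenting the construction where necessary to produce explicit combinatorial witnesses.

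Concretely, I would take $\mathcal{I}$ to be the eventually different ideal $\mathcal{ED}$ on $\omega\times\omega$ (identified with $\omega$ via a fixed bijection), i.e.\ the ideal generated by the columns $C_n=\{n\}\times\omega$ and by the graphs of all functions in $\omega^\omega$. It is routine that $\mathcal{ED}$ is a tall $F_\sigma$ ideal, so by Corollary 26 there is a $+$-Ramsey \textsf{MAD} family $\mathcal{A}\subseteq\mathcal{ED}$. Since $\mathcal{I}(\mathcal{A})\subseteq\mathcal{ED}$, the family $\mathcal{A}$ extends to an $F_\sigma$ ideal and is therefore not Laflamme.

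For the failure of weak tightness, I would arrange during the recursion that every $A\in\mathcal{A}$ has infinite intersection with at most one row $R_m:=\omega\times\{m\}$, by choosing each $A_\delta$ to be either almost contained in some row (row-like) or (almost) the graph of a finite-to-one function (transversal-like). The family $\{R_m:m\in\omega\}$ then witnesses the failure: each $R_m$ lies in $\mathcal{I}(\mathcal{A})^+$ because $\mathcal{A}\upharpoonright R_m$ is an infinite \textsf{MAD} family on $R_m$, while any $B\in\mathcal{I}(\mathcal{A})$, being a finite union of elements of $\mathcal{A}$, has $|B\cap R_m|=\omega$ for only finitely many $m$.

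For the failure of Sacks indestructibility, the plan is to reserve during the construction a perfect set $P\subseteq\omega^\omega$ of finite-to-one functions such that the graph of every $f\in P$ is almost disjoint from every $A\in\mathcal{A}$. Coding $P$ by a perfect tree gives a homeomorphism with $2^\omega$, so the standard Sacks generic corresponds to a generic $f\in P$ in $V[G]\setminus V$; its graph is then an infinite subset of $\omega\times\omega$ in $\mathcal{A}^\perp$, witnessing that $\mathcal{A}$ is not \textsf{MAD} in the Sacks extension.

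The principal obstacle is coordinating all these demands inside a single $\mathfrak{c}$-length recursion: at each stage one must process the next infinite set (for MAD-ness) and kill the next $\mathcal{I}(\mathcal{A}_\delta)^+$-branching tree (for the $+$-Ramsey property), while also respecting the row-like/transversal-like dichotomy and, most delicately, leaving a perfect tree of finite-to-one functions "uncaught" by the transversal-like $A_\delta$'s so that a suitable $P$ survives in $V$. Verifying that these requirements can be met compatibly with the Miller-tree splitting argument of Theorem 23 is the heart of the proof.
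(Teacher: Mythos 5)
Your overall strategy --- feed a well-chosen tall $F_{\sigma}$ ideal into Corollary 26 and read the negative properties off the containment $\mathcal{A}\subseteq\mathcal{I}$ --- is exactly the paper's, and your Laflamme argument is correct as stated: $\mathcal{ED}$ is a tall $F_{\sigma}$ ideal, so any \textsf{MAD} family inside it extends to an $F_{\sigma}$ ideal. But the paper gets the other two properties for free from the same mechanism, by citing that there is a tall ideal all of whose \textsf{MAD} subfamilies are Sacks destructible (\emph{Forcing with quotients}) and a tall ideal all of whose \textsf{MAD} subfamilies fail weak tightness (\emph{Combinatorics of MAD families}); no modification of the recursion of Theorem 23 is needed. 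Your plan instead re-enters the recursion to impose extra demands, which is where it breaks.

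The Sacks step contains a genuine error. You propose to reserve a perfect set $P\subseteq\omega^{\omega}$, coded in the ground model, such that the graph of \emph{every} $f\in P$ is almost disjoint from every $A\in\mathcal{A}$. But $P$ has (continuum many) elements already in $V$, and the graph of any single such $f$ would then be an infinite set in $\mathcal{A}^{\perp}$, contradicting the maximality of $\mathcal{A}$ before any forcing is done. The witness to Sacks destructibility must exist only in the extension; the standard way to arrange this is to work over $\omega^{<\omega}$ with an ideal such as the one generated by branches and antichains, where the generic branch $\widehat{r}_{gen}$ is automatically almost disjoint from every ground-model member of the ideal (a branch meets an antichain in at most one node and meets a different branch finitely). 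Containment in $\mathcal{ED}$ gives no such mechanism --- the Sacks real need not be eventually different from all ground-model functions --- so you would also need to justify, and cannot simply assert, that every \textsf{MAD} family inside $\mathcal{ED}$ is Sacks destructible. The weak-tightness step is closer to repairable: restricting each $A_{\delta}$ to meet at most one row infinitely is compatible with the construction, but you must additionally guarantee that each row $R_{m}$ stays in $\mathcal{I}(\mathcal{A})^{+}$ (for instance by seeding $\mathcal{A}$ with an infinite partition of each row), since otherwise finitely many row-like members could almost cover $R_{m}$ and your witnessing family would not consist of positive sets. As written, both of these steps are announced rather than carried out, and the Sacks one cannot be carried out in the form you describe.
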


\begin{proof}
The corollary follows by the previous result. In \cite{ForcingwithQuotients}
it was proved that there is a tall ideal $\mathcal{I}$ such that every
\textsf{MAD }family contained in $\mathcal{I}$ is Sacks destructible. A
similar result for weak tightness was proved in
\cite{CombinatoricsofMADFamilies}.
\end{proof}

\qquad\ \ \qquad\ \ 

The following is a very important definition:

\begin{definition}
We say $\varphi:\wp\left(  \omega\right)  \longrightarrow\omega\cup\left\{
\omega\right\}  $ is a \emph{lower semicontinuous submeasure} if the following hold:

\begin{enumerate}
\item $\varphi\left(  \omega\right)  =\omega.$

\item $\varphi\left(  A\right)  =0$ if and only if $A=\emptyset.$

\item $\varphi\left(  A\right)  \leq\varphi\left(  B\right)  $ whenever
$A\subseteq B.$

\item $\varphi\left(  A\cup B\right)  \leq\varphi\left(  A\right)
+\varphi\left(  B\right)  $ for every $A,B\subseteq X.$

\item (lower semicontinuity) if $A\subseteq\omega$ then $\varphi\left(
A\right)  =sup\left\{  \varphi\left(  A\cap n\right)  \mid n\in\omega\right\}
.$
\end{enumerate}
\end{definition}

\qquad\ \ \qquad\ \ 

Given a lower semicontinuous submeasure $\varphi$ we define \textsf{Fin}%
$\left(  \varphi\right)  $ as the family of those subsets of $\omega$ with
finite submeasure. The following is a very interesting result of Mazur:

\begin{proposition}
[Mazur \cite{Mazur}]$\mathcal{I}$ is an $F_{\sigma}$-ideal if and only if
there is a lower semicontinuous submeasure such that $\mathcal{I}=$
\textsf{Fin}$\left(  \varphi\right)  .$
\end{proposition}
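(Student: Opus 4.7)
The proof has two directions. For the easy one, suppose $\mathcal{I}=\textsf{Fin}(\varphi)$ with $\varphi$ a lower semicontinuous submeasure. Then $\mathcal{I}=\bigcup_{n\in\omega} C_n$ where $C_n=\{A\subseteq\omega:\varphi(A)\leq n\}$, and each $C_n$ is closed in $2^\omega$: if $A_k\to A$ pointwise with $\varphi(A_k)\leq n$, then for every $m$ we have $A\cap m=A_k\cap m$ for all large $k$, so $\varphi(A\cap m)=\varphi(A_k\cap m)\leq\varphi(A_k)\leq n$; taking $\sup_m$ and invoking lower semicontinuity yields $\varphi(A)\leq n$. Hence $\mathcal{I}$ is $F_\sigma$.

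For the converse, write $\mathcal{I}=\bigcup_n F_n$ with $F_n$ closed in $2^\omega$, and normalize the presentation in three stages. First, replace each $F_n$ by its downward closure $F_n^{\downarrow}=\{B:\exists A\in F_n,\ B\subseteq A\}$; this is still closed because if $B_k\subseteq A_k\in F_n$ and $B_k\to B$, then by compactness of $2^\omega$ we may pass to a subsequence with $A_k\to A\in F_n$, and then $B\subseteq A$. Second, take partial unions to make the sequence increasing, and put $F_0=\{\emptyset\}$. Third, recursively thicken so that $F_n+F_m\subseteq F_{n+m}$, where $F+F':=\{A\cup B:A\in F,\ B\in F'\}$ is the continuous image of the compact product $F\times F'$, hence compact, and lies in $\mathcal{I}$ because $\mathcal{I}$ is closed under finite unions and subsets.

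Now define $g(a)=\min\{n:a\in F_n\}$ for each finite $a\subseteq\omega$ (well-defined, since $\mathcal{I}$ contains all finite sets) and set $\varphi(A)=\sup_{n\in\omega} g(A\cap n)$. Lower semicontinuity is built into the definition. Subadditivity on finite sets, $g(a\cup b)\leq g(a)+g(b)$, is immediate from $F_n+F_m\subseteq F_{n+m}$, and lifts to arbitrary $A,B$ by writing any finite $c\subseteq A\cup B$ as $(c\cap A)\cup(c\cap B)$. Monotonicity and $\varphi(\emptyset)=0$ are clear. Finally, $\textsf{Fin}(\varphi)=\mathcal{I}$ in both directions: if $A\in F_n$ then heredity gives $g(a)\leq n$ for every finite $a\subseteq A$, whence $\varphi(A)\leq n$; and if $\varphi(A)\leq n$ then every $A\cap m$ lies in $F_n$, and closedness of $F_n$ with $A\cap m\to A$ forces $A\in F_n\subseteq\mathcal{I}$. (This also yields $\varphi(\omega)=\omega$ when $\mathcal{I}$ is proper, since otherwise $\omega$ would have to lie in some $F_n\subseteq\mathcal{I}$.)

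The main obstacle is the third normalization step: arranging the full additive-indexed closure $F_n+F_m\subseteq F_{n+m}$, rather than just the weaker $F_n+F_n\subseteq F_{n+1}$, while simultaneously preserving closedness, heredity, and the equality $\bigcup_n F_n=\mathcal{I}$. Without this, the min-function $g$ is only ``almost subadditive'' (costing an additive factor at each union) and fails to define a genuine submeasure. Once the normalization is in place, every subsequent verification is routine.
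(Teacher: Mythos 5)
The paper does not actually prove this proposition: it is quoted from Mazur's paper \cite{Mazur} as a black box, so there is no in-paper argument to compare against. Your proof is correct and is essentially the standard proof of Mazur's characterization. The easy direction (closedness of $\{A\mid\varphi(A)\leq n\}$ via lower semicontinuity) is fine, and you correctly identify the crux of the converse as the normalization $F_n+F_m\subseteq F_{n+m}$. That is the one step you assert rather than carry out, so for completeness: after your first two normalizations put $G_0=\{\emptyset\}$ and recursively $G_n=F_n\cup\bigcup\{G_i+G_j\mid i+j=n,\ i,j\geq1\}$; this is a finite union of compact hereditary subsets of $\mathcal{I}$ (heredity of $G_i+G_j$ follows from heredity of $G_i,G_j$ by splitting $C\subseteq A\cup B$ as $(C\cap A)\cup(C\setminus A)$), it satisfies $G_n+G_m\subseteq G_{n+m}$ by construction, and $G_n\supseteq G_1+G_{n-1}\supseteq G_{n-1}$ keeps the sequence increasing, so all the properties your later verifications rely on (closedness, heredity, monotone union equal to $\mathcal{I}$) are preserved. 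With that recursion spelled out, the remaining checks are routine exactly as you say, so the proposal stands as a complete proof of the cited result.
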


\qquad\ \ 

If $a\subseteq\omega^{<\omega}$ we define $\pi\left(  a\right)  =\left\{
f\in\omega^{\omega}\mid\exists^{\infty}n\left(  f\upharpoonright n\in
a\right)  \right\}  .$ Given $f\in\omega^{\omega}$, define $\widehat
{f}=\left\{  f\upharpoonright n\mid n\in\omega\right\}  $ and let
$\mathcal{BR}=\{\widehat{f}\mid f\in\omega^{\omega}\}.$ By $\mathcal{J}$ we
denote the ideal on $\omega^{<\omega}$ consisting of all sets $a\subseteq
\omega^{<\omega}$ such that $\pi\left(  a\right)  $ is finite. Clearly
$\mathcal{BR}\subseteq\mathcal{J}.$ The next result follows easily from the
results in \cite{TesisDavid}, but we include a proof for the convenience of
the reader:

\begin{lemma}
$\mathcal{J}$ can not be extended to an $F_{\sigma}$-ideal.
\end{lemma}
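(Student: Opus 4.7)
Suppose, for contradiction, that $\mathcal{J}$ is contained in a proper $F_{\sigma}$-ideal. By Mazur's theorem this is equivalent to the existence of a lower semicontinuous submeasure $\varphi$ on $\omega^{<\omega}$ with $\varphi(\omega^{<\omega})=\infty$ and $\mathcal{J}\subseteq\mathsf{Fin}(\varphi)$. The plan is to produce $a\in\mathcal{J}$ with $\varphi(a)=\infty$, which will contradict $a\in\mathsf{Fin}(\varphi)$.

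Since $\mathcal{BR}\subseteq\mathcal{J}$, every branch-trace $\widehat{f}$ has $\varphi(\widehat{f})<\infty$. Fix an enumeration $\{t_i:i\in\omega\}$ of $\omega^{<\omega}$. Each finite truncation $\widehat{f}\cap\{t_0,\ldots,t_{m-1}\}$ depends only on finitely many values of $f$, so $f\mapsto\varphi(\widehat{f}\cap\{t_0,\ldots,t_{m-1}\})$ is locally constant on $\omega^{\omega}$. Consequently $\Phi(f):=\varphi(\widehat{f})=\sup_{m}\varphi(\widehat{f}\cap\{t_0,\ldots,t_{m-1}\})$ is lower semicontinuous, every level set $\{f:\Phi(f)\leq n\}$ is closed, and from $\omega^{\omega}=\bigcup_{n}\{\Phi\leq n\}$ the Baire category theorem yields $s\in\omega^{<\omega}$ and $N\in\omega$ with $\varphi(\widehat{f})\leq N$ for all $f\in[s]$.

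The bad set $a$ is then constructed by a recursive tree argument inside $T_{s}:=\{t\in\omega^{<\omega}:s\sqsubseteq t\}$. The aim is to produce pairwise $\sqsubseteq$-incomparable nodes $(s_n)_{n\in\omega}\subseteq T_{s}$ together with finite sets $b_n\subseteq T_{s_n}$ satisfying $\varphi(b_n)\geq n$. Because the $s_n$'s are pairwise $\sqsubseteq$-incomparable, the subtrees $T_{s_n}$ are pairwise disjoint, so any $\sqsubseteq$-chain in $a:=\bigsqcup_{n}b_n$ must lie inside a single finite $b_n$. Therefore $a$ contains no infinite chain, $\pi(a)=\emptyset$, and $a\in\mathcal{J}$; meanwhile $\varphi(a)\geq\varphi(b_n)\geq n$ for each $n$ by monotonicity, so $\varphi(a)=\infty$, delivering the required contradiction.

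The main obstacle is executing the recursion: one must keep producing nodes whose subtree-measure is unbounded while also keeping them pairwise $\sqsubseteq$-incomparable. At each stage, given a node $t$ with $\varphi(T_t)=\infty$, countable subadditivity applied to $T_t=\{t\}\sqcup\bigsqcup_{i}T_{t^{\frown}\langle i\rangle}$ gives $\sum_{i}\varphi(T_{t^{\frown}\langle i\rangle})=\infty$, yielding the dichotomy: either some child subtree is itself of infinite $\varphi$-measure (permitting further descent) or the children's finite measures diverge (providing infinitely many pairwise incomparable candidate roots, from which the $b_n$'s are extracted via lower semicontinuity). The uniform branch bound from the previous paragraph is essential here: along any single branch $f\in[s]$ at most $N$ units of $\varphi$-mass are available, so an unending sequence of descents into nested infinite-measure children along a single branch would eventually concentrate more than $N$ of $\varphi$-mass on $\widehat{f}$, a contradiction. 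Thus the recursion is forced to split laterally infinitely often, which produces the required antichain $(s_n)$ and closes the argument.
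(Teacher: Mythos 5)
Your overall strategy --- manufacture a chain-free set $a$ with $\varphi(a)=\infty$ --- is the same as the paper's, but the step that is supposed to make your recursion produce $a$ is not valid, and this is a genuine gap. The problem is the claim that an unending descent through nested infinite-measure subtrees ``would eventually concentrate more than $N$ of $\varphi$-mass on $\widehat{f}$.'' Knowing $\varphi(T_{f\upharpoonright n})=\infty$ for every $n$ says nothing about $\varphi(\widehat{f})$: the finite sets witnessing the infinite submeasure of $T_{f\upharpoonright n}$ can sit anywhere in that subtree, far from the branch. Concretely, let $A_n$ be the set of children of $0^{n}$ other than $0^{n+1}$ and set $\varphi(X)=\sup_{n}\min\left(  \left\vert X\cap A_{n}\right\vert ,n\right)$ (plus a summable perturbation so that only $\emptyset$ gets measure $0$). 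This is a lower semicontinuous submeasure, every branch has $\varphi$-measure at most $2$ (so your Baire step gives $s=\emptyset$, $N=2$), yet $\varphi(T_{0^{n}})=\infty$ for every $n$ while every other child of $0^{n}$ spans a subtree of measure at most $1$; your recursion therefore descends along $0^{\infty}$ forever and never outputs an antichain. (This $\varphi$ is not a counterexample to the lemma --- a set meeting each $A_n$ in $n$ points is chain-free and $\varphi$-infinite --- but it defeats the termination argument, which is the only place you use the branch bound.) A secondary flaw: in the other horn of your dichotomy, $\sum_{i}\varphi(T_{t^{\frown}\langle i\rangle})=\infty$ with all terms finite does not make the individual measures diverge (they can all be $\leq1$); you must instead use $\varphi\left(  \bigcup_{i}T_{t^{\frown}\langle i\rangle}\right)  =\infty$ and pull finite pieces out of finite unions of the child subtrees via lower semicontinuity.

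Both defects are repairable and the repair shows how the branch bound should really be used. If the descent runs forever along a branch $f$, put $R_{m}=T_{f\upharpoonright m}\setminus\left(  T_{f\upharpoonright(m+1)}\cup\left\{  f\upharpoonright m\right\}  \right)$, the off-branch remainder at level $m$; elements of distinct $R_{m}$'s are pairwise incomparable, and subadditivity applied to $T_{f\upharpoonright n}=\left\{  f\upharpoonright m\mid m\geq n\right\}  \cup\bigcup_{m\geq n}R_{m}$ together with $\varphi(\widehat{f})<\infty$ (no Baire category needed --- this holds for each single branch) gives $\varphi\left(  \bigcup_{m\geq n}R_{m}\right)  =\infty$ for every $n$. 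Lower semicontinuity then yields finite sets $b_{j}$ of measure at least $j$ supported on pairwise disjoint blocks of $R_{m}$'s, and $\bigcup_{j}b_{j}$ is chain-free of infinite measure; the terminating case is handled the same way with the child subtrees in place of the $R_{m}$'s. For comparison, the paper sidesteps the entire tree recursion: it runs through all $s\in\omega^{<\omega}$, at each step keeping a $\varphi$-infinite set inside either the cone $B_{0}(s)$ or its incompatibility class $B_{1}(s)$ (the finitely many proper initial segments of $s$ cost only finite measure), extracts a finite piece of measure at least $n+1$ at stage $n$, and observes that the union $G$ decides every cone and hence satisfies $\left\vert \pi(G)\right\vert \leq1$, so $G\in\mathcal{J}\setminus\mathsf{Fin}(\varphi)$. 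That route avoids both the termination issue and the case analysis.
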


\begin{proof}
Let $\varphi:\wp\left(  \omega\right)  \longrightarrow\omega\cup\left\{
\omega\right\}  $ be a lower semicontinous submeasure, we will prove that
$\mathcal{J}$ is not a subset of \textsf{Fin}$\left(  \varphi\right)  .$ Given
$s\in\omega^{<\omega}$ we denote $B_{0}\left(  s\right)  =\left\{  t\in
\omega^{<\omega}\mid s\subseteq t\right\}  $ and $B_{1}\left(  s\right)
=\left\{  t\in\omega^{<\omega}\mid s\perp t\right\}  $ (where $s\perp t$
denotes that $s$ and $t$ are incompatible). Let $\omega^{<\omega}=\left\{
s_{n}\mid n\in\omega\right\}  .$ We recursively construct two $\left\langle
i_{n}\mid n\in\omega\right\rangle $ and $\left\langle F_{n}\mid n\in
\omega\right\rangle $ such that for every $n\in\omega$ the following holds:

\begin{enumerate}
\item $i_{n}\in\left\{  0,1\right\}  .$

\item $F_{n}$ is a finite subset of $\omega$ and$\ \varphi\left(
F_{n}\right)  \geq n+1.$

\item $%
%TCIMACRO{\tbigcap \limits_{j\leq n}}%
%BeginExpansion
{\textstyle\bigcap\limits_{j\leq n}}
%EndExpansion
B_{i_{j}}\left(  s_{j}\right)  \in$ \textsf{Fin}$\left(  \varphi\right)
^{+}.$

\item $F_{n}\subseteq%
%TCIMACRO{\tbigcap \limits_{j\leq n}}%
%BeginExpansion
{\textstyle\bigcap\limits_{j\leq n}}
%EndExpansion
B_{i_{j}}\left(  s_{j}\right)  .$
\end{enumerate}

\qquad\ \ 

The construction is very easy to perform, let $G=%
%TCIMACRO{\tbigcup \limits_{n\in\omega}}%
%BeginExpansion
{\textstyle\bigcup\limits_{n\in\omega}}
%EndExpansion
F_{n}.$ Note that $G\in$ \textsf{Fin}$\left(  \varphi\right)  ^{+}.$
Furthermore, for every $s\in\omega^{<\omega}$ either $G$ is almost contained
in $B_{0}\left(  s\right)  $ or is almost disjoint from it. It is easy to see
that $G\in\mathcal{J}$ so $\mathcal{J}$ is not contained in \textsf{Fin}%
$\left(  \varphi\right)  .$
\end{proof}

\qquad\ \ 

We can now prove the following:

\begin{proposition}
[$\mathsf{CH}$]There is a Laflamme \textsf{MAD} family that is not $+$-Ramsey.
\end{proposition}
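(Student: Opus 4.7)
The strategy is to build $\mathcal{A}$ on the ground set $\omega^{<\omega}$ (identified with $\omega$ via a fixed bijection) so that $\mathcal{BR}\subseteq\mathcal{A}$. This immediately gives the failure of $+$-Ramsey via the natural tree $T$ whose nodes encode finite initial-segment chains in $\omega^{<\omega}$: the successor set of a node ending at $s$ is the fan $\{s^\frown k\mid k\in\omega\}$, and a branch of $T$ has image $\widehat{g}$ for some $g\in\omega^{\omega}$. Since every $\widehat{g}\in\mathcal{BR}\subseteq\mathcal{A}\subseteq\mathcal{I}(\mathcal{A})$, no branch of $T$ has image in $\mathcal{I}(\mathcal{A})^+$. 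To keep the fans in $\mathcal{I}(\mathcal{A})^+$, I fix in advance, for every $t\in\omega^{<\omega}$, an infinite partition $\{t^\frown n\mid n\in\omega\}=\bigsqcup_{i\in\omega}P_{t,i}$ into infinite pieces, include every $P_{t,i}$ in $\mathcal{A}$, and insist that every other member of $\mathcal{A}$ be almost disjoint from each $P_{t,i}$. Then any finite union $A_1\cup\dots\cup A_k$ of elements of $\mathcal{A}$ can entirely swallow only finitely many $P_{t,i}$'s (namely the ones that are themselves among the $A_j$'s) and meets each remaining $P_{t,i}$ in only finitely many points, so $\{t^\frown n\mid n\in\omega\}\setminus(A_1\cup\dots\cup A_k)$ is infinite, and each fan stays in $\mathcal{I}(\mathcal{A})^+$.

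Using $\mathsf{CH}$, enumerate $[\omega^{<\omega}]^{\omega}=\{X_\alpha\mid\alpha<\omega_1\}$, $\mathcal{BR}=\{\widehat{g_\alpha}\mid\alpha<\omega_1\}$, and all lower semicontinuous submeasures $\{\varphi_\alpha\mid\alpha<\omega_1\}$. Start with $\mathcal{A}_0=\{P_{t,i}\mid t\in\omega^{<\omega},\,i\in\omega\}$, a countable AD family. At stage $\alpha$ I first add $\widehat{g_\alpha}$; then, if $X_\alpha$ satisfies $X_\alpha\cap\widehat{g}$ finite for every $g\in\omega^{\omega}$ (call this being AdBr) and is almost disjoint from the countable family $\mathcal{A}_\alpha$, I add $X_\alpha$ itself; finally I handle the Laflamme requirement for $\varphi_\alpha$ as described below. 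By construction each $\mathcal{A}_\alpha$ is countable and AD, every non-branch member is AdBr, and every non-$P_{t,i}$ member is almost disjoint from every $P_{t,i}$.

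The main obstacle is the Laflamme step. If some $A\in\mathcal{A}_\alpha$ already has $\varphi_\alpha(A)=\infty$ there is nothing to do; otherwise $\varphi_\alpha(C)<\infty$ for every $C\in\mathcal{A}_\alpha$. Enumerate $\mathcal{A}_\alpha=\{C_n\mid n<\omega\}$ and repeat the construction in the proof of Lemma 29, adding the requirement that at step $n$ the finite set $F_n$ be disjoint from $\bigcup_{j\leq n}C_j$; this is possible because
\[
\varphi_\alpha\Bigl(\bigcap_{j\leq n}B_{i_j}(s_j)\setminus\bigcup_{j\leq n}C_j\setminus\bigcup_{m<n}F_m\Bigr)=\infty,
\]
since all subtracted sets have finite $\varphi_\alpha$. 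The resulting $G=\bigcup_n F_n$ lies in $\mathcal{J}$, satisfies $\varphi_\alpha(G)=\infty$, and is almost disjoint from every $C_n$. Writing $\pi(G)=\{g_1,\dots,g_m\}$, if some $\varphi_\alpha(\widehat{g_i})=\infty$ then $\widehat{g_i}\in\mathcal{BR}\subseteq\mathcal{A}$ already witnesses Laflamme for $\varphi_\alpha$; otherwise $G\setminus(\widehat{g_1}\cup\dots\cup\widehat{g_m})$ is AdBr with $\varphi_\alpha$-value still infinite, and I add it to $\mathcal{A}$. The final $\mathcal{A}=\bigcup_{\alpha<\omega_1}\mathcal{A}_\alpha$ is MAD (any $X$ with $\pi(X)\neq\emptyset$ meets some branch of $\mathcal{A}$ infinitely, and any AdBr $X_\alpha$ is dealt with at its own stage), Laflamme by construction, and not $+$-Ramsey by the first paragraph.
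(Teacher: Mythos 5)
Your proposal is correct and takes essentially the same route as the paper's proof: include all of $\mathcal{BR}$ together with infinite partitions of each fan $\left\{ s^{\frown}n\mid n\in\omega\right\}$ to kill $+$-Ramseyness via the initial-segment tree, and defeat each lower semicontinuous submeasure by re-running the construction of Lemma 29 relative to the countably many sets of finite submeasure built so far. The only organizational difference is that you secure maximality by an explicit enumeration of $\left[ \omega^{<\omega}\right]^{\omega}$, whereas the paper obtains it implicitly from the diagonalization against all $F_{\sigma}$-ideals (applied to the ideals $\left\{ A\mid\left\vert A\cap X\right\vert <\omega\right\}$).
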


\begin{proof}
Let $\left\{  \mathcal{I}_{\alpha}\mid\alpha\in\omega_{1}\right\}  $ be the
set of all $F_{\sigma}\,$-ideals in $\omega^{<\omega}.$ We construct
$\mathcal{A}=\left\{  A_{\alpha}\mid\alpha<\omega_{1}\right\}  $ such that the
following holds:

\begin{enumerate}
\item $\mathcal{A\cup BR}$ is an \textsf{AD} family.

\item If $s\in\omega^{<\omega}$ then $\mathcal{A}$ contains an infinite
partition of $\left\{  s^{\frown}n\mid n\in\omega\right\}  $ into infinite sets.

\item If\emph{ }$\mathcal{A}_{\alpha}\mathcal{\cup BR}\subseteq\mathcal{I}%
_{\alpha}$ then $A_{\alpha}\notin\mathcal{I}_{\alpha}$ (where $\mathcal{A}%
_{\alpha}=\left\{  A_{\xi}\mid\xi<\alpha\right\}  $).$\qquad$

\item $\mathcal{A}_{\alpha}$ is countable.
\end{enumerate}

\qquad\ \ \ \qquad\ \ \ 

At step $\alpha$ assume that $\mathcal{BR}\cup\mathcal{A}_{\alpha}%
\subseteq\mathcal{I}_{\alpha}$. Since $\mathcal{I}_{\alpha}$ is an $F_{\sigma
}$-ideal and it contains all branches, there is $a\in\mathcal{I}_{\alpha}%
^{+}\cap\mathcal{J}.$ Let $\pi\left(  a\right)  =\left\{  f_{1},...,f_{n}%
\right\}  $ and we now define $b=a\setminus(\widehat{f}_{1}\cup...\cup
\widehat{f}_{n}).$ Note that $\pi\left(  b\right)  =\emptyset$ and
$b\in\mathcal{I}_{\alpha}^{+}.$ Let $\varphi$ be a lower semicontinuous
submeasure such that $\mathcal{I}_{\alpha}=$ \textsf{Fin}$\left(
\varphi\right)  $ and let $\mathcal{A}_{\alpha}=\left\{  B_{n}\mid n\in
\omega\right\}  .$ We recursively find $s_{n}\subseteq b\setminus(B_{0}%
\cup...\cup B_{n})$ such that $\varphi\left(  s_{n}\right)  \geq n$ (this is
possible since $b\in\mathcal{I}_{\alpha}^{+}$). Then $A_{\alpha}=%
%TCIMACRO{\tbigcup \limits_{n\in\omega}}%
%BeginExpansion
{\textstyle\bigcup\limits_{n\in\omega}}
%EndExpansion
s_{n}$ is the set we were looking for. It is easy to see that $\mathcal{A\cup
BR}$ is a Laflamme \textsf{MAD} family that is not $+$-Ramsey.
\end{proof}

\qquad\ \ 

We will now prove that weak tightness does not imply being $+$-Ramsey. Given
$s\in\omega^{<\omega}$ we define $\left[  s\right]  =\left\{  t\in
\omega^{<\omega}\mid s\subseteq t\right\}  .$

\begin{lemma}
If $A\subseteq\omega^{<\omega}$ does not have infinite antichains then $A$ can
be covered with finitely many chains.
\end{lemma}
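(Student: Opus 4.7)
The plan is to analyze the tree $T:=\{t\in\omega^{<\omega}\mid t\subseteq a \text{ for some } a\in A\}$ of prefixes of elements of $A$, and show that its structure under the hypothesis forces $A$ to be covered by finitely many chains.

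First I would observe that $T$ is finitely branching: if some node $s\in T$ had infinitely many immediate successors $s{}^\frown j_i$ in $T$, then extending each to an element $a_i\in A$ would give an infinite antichain $\{a_i\}$ in $A$ (extensions of pairwise incomparable nodes are pairwise incomparable), contradicting the hypothesis.

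The main step is to show that $T$ has only finitely many infinite branches. Since $T$ is finitely branching, $[T]\subseteq\omega^{\omega}$ is compact; if it were infinite, Bolzano-Weierstrass would supply an accumulation point $f^{*}\in[T]$, and thus a sequence of distinct branches $f_{n}\in[T]\setminus\{f^{*}\}$ with $f_{n}\to f^{*}$. Letting $\ell_{n}$ be the first position at which $f_{n}$ differs from $f^{*}$, we have $\ell_{n}\to\infty$, so after passing to a subsequence we may assume the $\ell_{n}$ are strictly increasing. Then the nodes $s_{n}:=f_{n}\upharpoonright(\ell_{n}+1)$ are pairwise incomparable: for $n<m$, $s_{m}$ agrees with $f^{*}$ on $[0,\ell_{m})$ and hence at $\ell_{n}$, while $s_{n}$ disagrees with $f^{*}$ at $\ell_{n}$, so $s_{n}$ and $s_{m}$ differ at a position inside both. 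Extending each $s_{n}$ to some $a_{n}\in A$ gives an infinite antichain in $A$, contradicting the hypothesis. This extraction of the antichain from an accumulating family of branches is the step where real work is done; everything else is bookkeeping.

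Finally, let $b_{1},\ldots,b_{k}$ enumerate the infinite branches of $T$, put $C_{i}:=A\cap\{b_{i}\upharpoonright n\mid n\in\omega\}$ (each a chain), and $A':=A\setminus(C_{1}\cup\cdots\cup C_{k})$. Every element of $A'$ lies on no infinite branch of $T$, so its subtree in $T$ is finitely branching with no infinite branch, hence finite by K\"{o}nig's lemma. If $A'$ were infinite, Ramsey's theorem applied to the $2$-coloring of pairs of $A'$ by comparability would yield either an infinite antichain in $A'$ (impossible by hypothesis) or an infinite chain in $A'$; but the union of such a chain is an infinite branch of $T$ distinct from every $b_{i}$, contradicting the enumeration. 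Hence $A'$ is finite, and $A=A'\cup C_{1}\cup\cdots\cup C_{k}$ is a finite union of chains (the $C_{i}$'s together with the singletons comprising $A'$).
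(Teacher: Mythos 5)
Your proof is correct, but it takes a genuinely different route from the paper's. The paper works directly inside $A$: call $s\in A$ \emph{unsplitting} if any two extensions of $s$ in $A$ are compatible; every element of $A$ extends to an unsplitting node (otherwise one builds a perfect subtree of $A$ and hence an infinite antichain); a maximal antichain $B$ of unsplitting nodes is finite, and $A$ is then covered by the branches $\widehat{b}_{s}$ for $s\in B$. You instead pass to the tree $T$ of prefixes of elements of $A$, show it is finitely branching, deduce from compactness of $[T]$ and a Bolzano--Weierstrass extraction that $T$ has only finitely many infinite branches, and dispose of the residue $A'$ with Ramsey's theorem applied to the comparability colouring. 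Each step of yours checks out; in particular the extraction of the pairwise incomparable nodes $s_{n}=f_{n}\upharpoonright(\ell_{n}+1)$ and their extension into an infinite antichain of $A$ is exactly right, and the identification of the union of an infinite chain in $A'$ with one of the $b_{i}$ closes the argument. The paper's argument is shorter and produces the cover in one stroke with no leftover piece; yours yields the slightly sharper structural statement that $A$ decomposes into finitely many chains lying along infinite branches of its prefix tree together with a finite set, at the cost of invoking compactness and Ramsey where a direct count would suffice --- for instance, the minimal elements of $A'$ form a finite antichain and each has only finitely many extensions in $T$ by the K\"{o}nig argument you already noted, so $A'$ is finite without appealing to Ramsey's theorem.
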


\begin{proof}
Define $S$ as the set of all unsplitting nodes of $A$ i.e. $s\in A$ if and
only if every two extensions of $s$ in $A$ are compatible. Note that
$S\subseteq A$ and every element of $A$ can be extended to an element of $S$
(otherwise $A$ would contain a Sacks tree and hence an infinite antichain).
Let $B\subseteq S$ be a maximal (finite) antichain. For every $s\in B$ let
$b_{s}\in\omega^{\omega}$ the unique branch such that $A\cap\left[  s\right]
\subseteq\widehat{b}_{s}.$ Then (by the maximality of $B$) we conclude
$A\subseteq%
%TCIMACRO{\tbigcup \limits_{s\in B}}%
%BeginExpansion
{\textstyle\bigcup\limits_{s\in B}}
%EndExpansion
\widehat{b}_{s}.$
\end{proof}

\qquad\ \ 

We need the following lemma:\qquad\ \ \ \ \qquad\ \ \ 

\begin{lemma}
If $A=\left\{  A_{n}\mid n\in\omega\right\}  \subseteq\wp\left(
\omega^{<\omega}\right)  $ is a collection of infinite antichains, then there
is an antichain $B$ such that $B\cap A_{n}$ is infinite for infinitely many
$n\in\omega.$
\end{lemma}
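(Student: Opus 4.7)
The plan is a case analysis by descent. Either some structural uniformity across the $A_n$'s immediately produces an antichain, or, after an iterative descent, we isolate a common ``spine'' branch $b\in\omega^\omega$ along which the $A_n$'s accumulate, and we extract an antichain by arranging that different $A_n$'s contribute elements branching off of $b$ at distinct levels.

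First I examine the first-coordinate sets $\Pi_n=\{s(0):s\in A_n\}$ (discarding the at-most-one element $\emptyset\in A_n$). If $\Pi_n$ is infinite for infinitely many $n$, a straightforward diagonalization gives, for each such $n$, an infinite $B_n\subseteq A_n$ with pairwise disjoint first-coordinate sets, and $\bigcup B_n$ is an antichain because nodes with distinct first coordinates are incomparable. Otherwise pigeonhole supplies, for cofinitely many $n$, a value $c_n$ with $|A_n\cap[\langle c_n\rangle]|=\omega$; if $\{c_n\}$ is unbounded we finish with the pairwise disjoint cones $[\langle c_{n_k}\rangle]$, and if $\{c_n\}$ is bounded we restrict to an infinite subfamily sharing a common $c^*$ and re-run the analysis inside $[\langle c^*\rangle]\cong\omega^{<\omega}$.

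If this iteration never terminates we produce a branch $b\in\omega^\omega$ and a decreasing sequence $N_0\supseteq N_1\supseteq\cdots$ of infinite sets with $|A_n\cap[b\upharpoonright m]|=\omega$ for every $n\in N_m$. I diagonalize, picking $n_m\in N_m\setminus\{n_0,\dots,n_{m-1}\}$. Since $A_{n_m}$ is an antichain, $|A_{n_m}\cap\widehat{b}|\le 1$, so infinitely many elements of $A_{n_m}\cap[b\upharpoonright m]$ branch off $b$ at some level $\ell\ge m$, lying in a cone $[(b\upharpoonright\ell)^\frown\langle c\rangle]$ with $c\ne b(\ell)$. The geometric key is that any two nodes living in distinct off-$b$ cones of this form disagree at position $\min(\ell,\ell')$, hence are incomparable. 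So it suffices to build each $B_{n_m}$ as an infinite subset of $A_{n_m}$ drawn from one or more off-$b$ cones that are pairwise disjoint from the finitely many cones used in previous stages.

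The main obstacle is carrying out this last selection when some $A_{n_m}$ is concentrated entirely in a single off-$b$ cone that has already been used at an earlier stage. I handle this by a pigeonhole on $N_m$: if infinitely many $n\in N_m$ share such a concentration cone $[(b\upharpoonright\ell)^\frown\langle c\rangle]$, I restrict $N_m$ to those $n$ and recurse the whole construction inside that sub-cone (which is isomorphic to $\omega^{<\omega}$, with the restricted $A_n$'s still infinite antichains, and with the already-built $B_{n_j}$ inside this cone treated as an additional finite-antichain obstacle the recursion must avoid). Since each such recursion strictly refines the ambient cone while only finitely many cones are forbidden at each stage, the process eventually terminates in a spreading case, yielding $B=\bigsqcup_m B_{n_m}$ as the required antichain intersecting each chosen $A_{n_m}$ in an infinite set.
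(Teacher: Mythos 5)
Your strategy is sound and genuinely different in organization from the paper's proof, which works with the tree $T$ of nodes that ``watch'' infinitely many $A_{n}$ and trichotomizes according to whether $T$ has a maximal node, contains a Sacks tree, or has an isolated branch. Your first-coordinate descent correctly either terminates in a case where the antichain can be read off by a disjoint refinement, or isolates a branch $b$ together with nested infinite sets $N_{m}$ such that $\left\vert A_{n}\cap\left[  b\upharpoonright m\right]  \right\vert =\omega$ for $n\in N_{m}$; and your ``geometric key'' --- that two nodes branching off $b$ at distinct points $\left(  \ell,c\right)  $ with $c\neq b\left(  \ell\right)  $ are incomparable --- is correct and is exactly what the endgame needs.

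The gap is in the final assembly. Your proposed repair for the case where some $A_{n_{m}}$ concentrates in a single already-used off-$b$ cone is to recurse the whole construction inside that cone, but (i) the termination of this outer recursion is only asserted: each recursive call starts a fresh descent which can again end in the branch case and again collide, and no well-founded measure is given to rule out an infinite regress; and (ii) the claim that the previously built $B_{n_{j}}$ restricted to the offending cone is a ``finite-antichain obstacle'' is false in precisely the scenario that triggers the recursion, where $B_{n_{j}}$ was itself drawn (infinitely) from that cone --- avoiding comparability with an infinite antichain there is essentially the original problem over again. Fortunately the recursion is unnecessary. For each stage $m$ let $E_{m}=A_{n_{m}}\cap\left[  b\upharpoonright m\right]  \setminus\widehat{b}$ and let $\delta\left(  t\right)  =\left(  \ell,c\right)  $ be the branch-off point of $t\in E_{m}$; note $\ell\geq m$. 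Either $\delta\left[  E_{m}\right]  $ is infinite (``spread'') or, by pigeonhole, some single cone $C_{m}=\left[  \left(  b\upharpoonright\ell_{m}\right)  ^{\frown}\left\langle c_{m}\right\rangle \right]  $ with $\ell_{m}\geq m$ contains infinitely many points of $E_{m}$ (``concentrated''). If infinitely many stages are spread, a disjoint refinement of the sets $\delta\left[  E_{m}\right]  $ finishes as you describe. If infinitely many stages are concentrated, the map $m\longmapsto\left(  \ell_{m},c_{m}\right)  $ is finite-to-one because $\left(  \ell_{m},c_{m}\right)  =\left(  \ell,c\right)  $ forces $m\leq\ell$; so after thinning the cones $C_{m}$ are pairwise distinct, distinct off-$b$ cones give incomparability by your geometric key, and $B=\bigcup_{m}\left(  A_{n_{m}}\cap C_{m}\right)  $ is the required antichain. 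With this replacement for your last paragraph the argument closes.
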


\begin{proof}
We say $s\in\omega^{<\omega}$ \emph{watches }$A_{n}$ if $s$ has infinitely
many extensions in $A_{n}.$ Define $T\subseteq\omega^{<\omega}$ such that
$s\in T$ if and only if there are infinitely many $n\in\omega$ such that $s$
watches $A_{n}.$ Note that $T$ is a tree. First assume there is $s\in T$ that
is a maximal node. By shrinking $A$ if needed, we may assume $s$ watches every
element of $A.$ We now define the set $C=\left\{  A_{n}\mid\exists^{\infty
}m\left(  A_{n}\cap\left[  s^{\frown}m\right]  \neq\emptyset\right)  \right\}
.$ In case $C$ is infinite, we can find an antichain $B$ that has infinite
intersection with every element of $C.$ Now assume that $C$ is finite, by
shrinking $A$ we may assume $C$ is the empty set. In this way, for every
$A_{n}$ there is $m_{n}$ such that $s^{\frown}m_{n}$ watches $A_{n}.$ We can
then find an infinite set $X\in\left[  \omega\right]  ^{\omega}$ such that
$m_{n}\neq m_{r}$ whenever $n\neq r$ and $n,r\in X$ (recall that $s$ is
maximal). Then $B=%
%TCIMACRO{\tbigcup \limits_{n\in X}}%
%BeginExpansion
{\textstyle\bigcup\limits_{n\in X}}
%EndExpansion
\left[  s^{\frown}m_{n}\right]  \cap A_{n}$ is the set we were looking for.

\qquad\ \ \ \qquad\ \ 

Now we may assume $T$ does not have maximal nodes. If $T$ contains a Sacks
tree then we can find an infinite antichain $Y\subseteq T$. For every $s\in Y$
we choose $n_{s}$ such that $s$ watches $A_{n_{s}}$ and if $s\neq t$ then
$A_{n_{s}}\neq A_{n_{t}}.$ Then $B=%
%TCIMACRO{\tbigcup \limits_{s\in Y}}%
%BeginExpansion
{\textstyle\bigcup\limits_{s\in Y}}
%EndExpansion
\left[  s\right]  \cap A_{n_{s}}$ is the set we were looking for.

\qquad\ \ 

The only case left is that there is $s\in T$ that does not split in $T$ nor is
maximal. Let $f\in\left[  T\right]  $ the only branch that extends $s.$ We may
assum+e $s$ watches every element of $A$ and every $A_{n}$ is disjoint from
$\widehat{f}$ (this is because $A_{n}$ is an antichain and $f$ is a branch).
We say $A_{n}$ \emph{is a comb with }$f$ if $\Delta(A_{n}\cap\left[  s\right]
,\widehat{f})$ is infinite. We may assume that either every element of $A$ is
a comb with $f$ or none is. In case all of them are combs we can easily find
the desired antichain. So assume none of them are combs. In this way, for
every $n\in\omega$ we can find $t_{n}$ extending $s$ but incompatible with $f$
of minimal length such that $t_{n}$ watches $A_{n}.$ Since $t_{n}\notin T$ we
can find $W\in\left[  \omega\right]  ^{\omega}$ such that $t_{n}\neq t_{m}$
for all $n,m\in W$ where $n\neq m.$ Then we recursively construct the desired antichain.
\end{proof}

\qquad\ \ \ \ \ \ \ \ \ \ \ \ \ \ \qquad\ \ \ \ \ \ \ \ \ \ \ \ \ \ \ \ \ \ \ \ \ 

We can then conclude the following:\ \ \ \ \ \ \ \ \ \ \ \ \ \ \ \qquad\ \ \ \ \ \ \ \ \ \ 

\begin{proposition}
[$\mathsf{CH}$]There is a weakly tight \textsf{MAD} family that is not $+$-Ramsey.
\end{proposition}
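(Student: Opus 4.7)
The plan is to construct under $\mathsf{CH}$ an \textsf{AD} family $\mathcal{A} = \{A_\alpha \mid \alpha < \omega_1\} \subseteq [\omega^{<\omega}]^\omega$ of infinite antichains such that $\mathcal{A} \cup \mathcal{BR}$ is a weakly tight \textsf{MAD} family on $\omega^{<\omega}$ that fails to be $+$-Ramsey. The non-$+$-Ramsey property will be automatic if, for each $s \in \omega^{<\omega}$, we arrange that $\mathcal{A}$ contains an infinite partition of $\{s^\frown n \mid n \in \omega\}$ into infinite antichains: then the tree $T = \omega^{<\omega}$ is $\mathcal{I}(\mathcal{A} \cup \mathcal{BR})^+$-branching while every branch $f \in [T]$ satisfies $im(f) = \widehat{f} \in \mathcal{BR} \subseteq \mathcal{I}(\mathcal{A} \cup \mathcal{BR})$.

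Using $\mathsf{CH}$, I would enumerate cofinally $\{\mathcal{X}_\alpha \mid \alpha < \omega_1\}$ the countable families of subsets of $\omega^{<\omega}$, $\{Z_\alpha \mid \alpha < \omega_1\}$ the infinite antichains in $\omega^{<\omega}$, and all $s \in \omega^{<\omega}$. At stage $\alpha$, with countable $\mathcal{A}_\alpha = \{A_\xi \mid \xi < \alpha\}$ in hand, I add new antichains to perform three tasks: (a) extend the partition for $\{s_\alpha^\frown n\}$; (b) ensure some $A \in \mathcal{A}$ has infinite intersection with $Z_\alpha$ (which suffices for \textsf{MAD}-ness since, by Lemma 32, every infinite $X \subseteq \omega^{<\omega}$ either contains an infinite chain, and is therefore hit by a branch, or contains an infinite antichain); and (c) if $\mathcal{X}_\alpha \subseteq \mathcal{I}(\mathcal{A}_\alpha \cup \mathcal{BR})^+$, produce a single $A_\alpha \in \mathcal{A}$ satisfying $|A_\alpha \cap X| = \omega$ for infinitely many $X \in \mathcal{X}_\alpha$, which witnesses weak tightness because $A_\alpha \in \mathcal{A} \subseteq \mathcal{I}$.

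Task (c) is the heart of the argument. First extract, for each $n$, an infinite positive antichain $Y_n \subseteq X_n$: if only finitely many $A \in \mathcal{A}_\alpha$ infinitely meet $X_n$, remove them from $X_n$ and apply Lemma 32 to the remainder to obtain $Y_n$ (automatically positive since \textsf{AD} with $\mathcal{A}_\alpha$); otherwise, since each such $A$ is an antichain \textsf{AD} with $\mathcal{BR}$, each $A \cap X_n$ is itself an infinite antichain, so Lemma 33 applied to the countable collection $\{A \cap X_n \mid A \in \mathcal{A}_\alpha, |A \cap X_n| = \omega\}$ yields an antichain $Y_n \subseteq X_n$ with $|Y_n \cap A| = \omega$ for infinitely many such $A$---and by pairwise \textsf{AD}-ness of $\mathcal{A}$, any finite subcover of $Y_n$ could absorb only finitely many of these $A$'s, so $Y_n$ is positive. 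Then apply Lemma 33 again to $\{Y_n\}$ to obtain an antichain $B$ with $|B \cap Y_n| = \omega$ for infinitely many $n$, and finally perform a diagonal construction extracting $A_\alpha \subseteq B$, an infinite antichain \textsf{AD} with $\mathcal{A}_\alpha$ and with $|A_\alpha \cap Y_n| = \omega$ for infinitely many $n$; then $|A_\alpha \cap X_n| \ge |A_\alpha \cap Y_n| = \omega$ for infinitely many $n$.

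The principal obstacle is the final diagonal extraction of $A_\alpha$ from $B$: one must simultaneously preserve \textsf{AD}-ness with the countably many $A \in \mathcal{A}_\alpha$, maintain the antichain condition on $A_\alpha$, and retain infinite intersections with infinitely many $Y_n$. The positivity of each $Y_n$ provides the essential room, since $Y_n \setminus \bigcup F$ is infinite for every finite $F \subseteq \mathcal{A}_\alpha$; but translating this into infinite sets $(B \cap Y_n) \setminus \bigcup F$ is subtle and likely requires a strengthened version of the second Lemma 33 application ensuring that $B \cap Y_n \in \mathcal{I}(\mathcal{A}_\alpha \cup \mathcal{BR})^+$ for infinitely many $n$, after which a routine bookkeeping argument completes task (c).
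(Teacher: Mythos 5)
Your overall architecture is exactly the paper's: work on $\omega^{<\omega}$, take $\mathcal{A}\cup\mathcal{BR}$ with $\mathcal{A}$ a family of antichains, kill $+$-Ramseyness by putting an infinite partition of each successor set $\left\{ s^{\frown}n\mid n\in\omega\right\} $ into $\mathcal{A}$ so that every branch of $\omega^{<\omega}$ lands in $\mathcal{BR}\subseteq\mathcal{I}(\mathcal{A}\cup\mathcal{BR})$, and use Lemmas 32 and 33 at each stage of a $\mathsf{CH}$ recursion to get weak tightness. However, there is a genuine gap, and it is precisely the one you flag yourself: the final extraction of $A_{\alpha}\subseteq B$ that is almost disjoint from all of $\mathcal{A}_{\alpha}$ while still meeting infinitely many $Y_{n}$ infinitely. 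With your choice of $Y_{n}$ this step can actually fail, not merely be ``subtle'': in the case where infinitely many $A\in\mathcal{A}_{\alpha}$ meet $X_{n}$ infinitely, you deliberately arrange $\left\vert Y_{n}\cap A\right\vert =\omega$ for infinitely many such $A$, so nothing prevents $B\cap Y_{n}$ from being contained in a single member of $\mathcal{A}_{\alpha}$ for every relevant $n$ (even the same member for all $n$); then no infinite $A_{\alpha}\subseteq B$ meeting infinitely many $Y_{n}$ infinitely can be almost disjoint from $\mathcal{A}_{\alpha}$. Your proposed repair (a ``strengthened Lemma 33'' giving $B\cap Y_{n}$ positive) just reproduces the original problem one level down: you would again need a positive-preserving diagonalization against countably many sets, which is the whole content of the step.

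The paper's proof resolves this by making the reduction go the other way. When $\Sigma=\left\{ A\in\mathcal{A}_{\alpha}\mid\left\vert A\cap X_{n}\right\vert =\omega\right\} $ is infinite, instead of keeping infinite intersections with infinitely many members of $\Sigma$, one applies Lemma 33 to $\left\{ A\cap X_{n}\mid A\in\Sigma'\right\} $ to get an antichain of the form $\bigcup\left\{ B_{A}\mid A\in\Sigma'\right\} $ and then selects a \emph{single} point $s_{A}\in B_{A}$ for each $A\in\Sigma'$, chosen by an easy diagonal so that $X_{n}'=\left\{ s_{A}\mid A\in\Sigma'\right\} $ is an infinite antichain lying in $\mathcal{A}_{\alpha}^{\perp}$ (it is still positive, being an infinite antichain almost disjoint from $\mathcal{A}_{\alpha}\cup\mathcal{BR}$, since $\mathcal{A}_{\alpha}\cup\mathcal{BR}$ is not maximal below $X_n'$ -- indeed $X_n'$ itself witnesses non-maximality). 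Once every $X_{n}'$ is in $\mathcal{A}_{\alpha}^{\perp}$, the obstacle disappears: setting $Y_{n}=X_{n}'\setminus(A_{\alpha_{0}}\cup\dots\cup A_{\alpha_{n}})$ removes only finitely many points from each $X_{n}'$, a single application of Lemma 33 to $\left\{ Y_{n}\right\} $ produces $A_{\alpha}$, and almost disjointness from each $A_{\alpha_{i}}$ is automatic because $A_{\alpha}\cap A_{\alpha_{i}}$ is contained in the union of the finitely many sets $Y_{n}\cap A_{\alpha_{i}}$ with $n<i$, each of which is finite. You should replace your ``infinite intersection with infinitely many $A$'' choice of $Y_{n}$ by this one-point-per-$A$ selection; the rest of your outline then goes through.
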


\begin{proof}
Let $\left\{  \overline{X}_{\alpha}\mid\omega\leq\alpha<\omega_{1}\right\}  $
enumerate all countable sequences of infinite subsets of $\omega^{<\omega}.$
Let $\mathcal{BR}=\{\widehat{f}\mid f\in\omega^{\omega}\},$ we construct
$\mathcal{A}=\left\{  A_{\alpha}\mid\alpha<\omega_{1}\right\}  $ such that the
following holds:

\begin{enumerate}
\item Every $A_{\alpha}$ is an antichain.

\item $\mathcal{A\cup BR}$ is an \textsf{AD} family.

\item If $s\in\omega^{<\omega}$ then $\mathcal{A}$ contains a partition of
$suc\left(  s\right)  =\left\{  s^{\frown}n\mid n\in\omega\right\}  .$

\item For every $\omega\leq\alpha<\omega_{1}$ if $\overline{X}_{\alpha
}=\left\{  X_{n}\mid n\in\omega\right\}  \subseteq\mathcal{I}\left(
\mathcal{A}_{\alpha}\mathcal{\cup BR}\right)  ^{+}$\emph{ }then $A_{\alpha
}\cap X_{n}$ is infinite for infinitely many $n\in\omega$ (where
$\mathcal{A}_{\alpha}=\left\{  A_{\xi}\mid\xi<\alpha\right\}  $).
\end{enumerate}

\qquad\ \ \ \ \ \ \ \ 

At step $\alpha=\left\{  \alpha_{n}\mid n\in\omega\right\}  $ assume
$\overline{X}_{\alpha}=\left\{  X_{n}\mid n\in\omega\right\}  \subseteq\left(
\mathcal{A}_{\alpha}\mathcal{\cup BR}\right)  ^{+}.$ We first claim that there
is an infinite antichain $X_{n}^{\prime}\subseteq X_{n}$ such that $X_{n}%
\in\mathcal{A}_{\alpha}^{\perp}.$ Let $\Sigma=\left\{  A\in\mathcal{A}%
_{\alpha}\mid\left\vert A\cap X_{n}\right\vert =\omega\right\}  .$ In case
$\Sigma$ is finitem by lemma 32 we can find an infinit antichain
$X_{n}^{\prime}\subseteq X_{n}\setminus%
%TCIMACRO{\tbigcup }%
%BeginExpansion
{\textstyle\bigcup}
%EndExpansion
\Sigma.$ If $\Sigma$ is infinite, then by lemma 33 we can find an infinite
$\Sigma^{\prime}\subseteq\Sigma$ and $B_{A}\in\left[  A\cap X_{n}\right]
^{\omega}$ for $A\in\Sigma^{\prime}$ such that $%
%TCIMACRO{\tbigcup }%
%BeginExpansion
{\textstyle\bigcup}
%EndExpansion
\left\{  B_{A}\mid A\in\Sigma^{\prime}\right\}  $ is an antichain. It is then
easy to choose distinct $\left\{  s_{A}\in B_{A}\mid A\in\Sigma^{\prime
}\right\}  $ so that $X_{n}^{\prime}=\left\{  s_{A}\in B_{A}\mid A\in
\Sigma^{\prime}\right\}  \in\mathcal{A}_{\alpha}^{\perp}.$

Let $Y_{n}=X_{n}^{\prime}\setminus\left(  A_{\alpha_{0}}\cup...A_{\alpha_{n}%
}\right)  $ which is an infinite antichain. By the lemma 33 we can find an
antichain $A_{\alpha}$ $\subseteq%
%TCIMACRO{\tbigcup \limits_{n\in\omega}}%
%BeginExpansion
{\textstyle\bigcup\limits_{n\in\omega}}
%EndExpansion
Y_{n}$ such that $A_{\alpha}\cap Y_{n}$ is infinite for infinitely many
$n\in\omega.$

\qquad\ \ \ \ 

Clearly $\mathcal{A\cup BR}$ is not $+$-Ramsey (recall that weakly tight
families are maximal).
\end{proof}

\qquad\ \ 

Recall that Miller indestructibility implies being $+$-Ramsey. We will now
prove that (in particular) Sacks or random indestructibility are not enough to
get $+$-Ramseyness. We will say a family $\mathcal{A}$ on $\omega^{<\omega}$
is a \emph{standard }$\mathcal{K}_{\sigma}$\emph{ family }if the following holds:

\begin{enumerate}
\item $\mathcal{A}$ is an \textsf{AD} family.

\item If $A\in\mathcal{A}$ either $\pi\left(  A\right)  =\emptyset$ or $A$ is
a finitely branching tree on $\omega^{<\omega}.$

\item If $s\in\omega^{<\omega}$ then $\left\{  s^{\frown}n\mid n\in
\omega\right\}  \in\mathcal{I}\left(  \mathcal{A}\right)  ^{++}.$
\end{enumerate}

\qquad\ \ \ 

Recall that if $a\subseteq\omega^{<\omega},$ we denoted $\pi\left(  a\right)
=\left\{  f\in\omega^{\omega}\mid\exists^{\infty}n\left(  f\upharpoonright
n\in a\right)  \right\}  .$ We now need the following lemma:

\begin{lemma}
Let $\mathbb{P}$ be an $\omega^{\omega}$-bounding forcing and $\mathcal{A}$ a
countable standard $\mathcal{K}_{\sigma}$ family. If $p\in\mathbb{P}$ and
$\dot{b}$ is a $\mathbb{P}$-name for an infinite subset of $\omega^{<\omega}$
such that $p\Vdash``\dot{b}\in\mathcal{A}^{\perp}\textquotedblright$ then
there are $q\leq p$ and $\mathcal{B}$ a countable standard $\mathcal{K}%
_{\sigma}$ family such that $\mathcal{A\subseteq B}$ and $q\Vdash``\dot
{b}\notin\mathcal{B}^{\perp}\textquotedblright.$
\end{lemma}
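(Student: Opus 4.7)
Enumerate $\mathcal{A}=\{A_i\mid i\in\omega\}$ and fix a bijection $e:\omega^{<\omega}\to\omega$. The plan is to produce $\mathcal{B}$ by adjoining at most one new element to $\mathcal{A}$. First I would use $\omega^{\omega}$-bounding twice: applied to the name $i\mapsto|\dot b\cap A_i|$ (which is forced to be finite because $\dot b\in\mathcal{A}^{\perp}$), it yields $q_1\le p$ and $m\in V$ with $q_1\Vdash|\dot b\cap A_i|\le m(i)$; applied to the enumeration of $\dot b$ in increasing $e$-order, it gives $q_0\le q_1$ and $x\in V$ with $q_0\Vdash$ ``the $n$-th element of $\dot b$ has $e$-code at most $x(n)$''.

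Next I would define $h(0)=0$, $h(j+1)=x\bigl(h(j)+\sum_{i\le j}m(i)\bigr)+1$, and set
\[
C_j=e^{-1}\bigl([h(j),h(j+1))\bigr)\setminus\bigcup_{i\le j}A_i,\qquad B_0=\bigcup_{j\in\omega}C_j.
\]
A short counting argument using the two bounds yields $q_0\Vdash|\dot b\cap C_j|\ge 1$ for every $j$: the interval $[h(j),h(j+1))$ is forced to contain at least $\sum_{i\le j}m(i)+1$ elements of $e(\dot b)$, while at most $\sum_{i\le j}m(i)$ of them can lie in $\bigcup_{i\le j}A_i$. Since $A_i$ meets $C_j$ only when $j<i$, the ground-model set $B_0$ is almost disjoint from $\mathcal{A}$ and $q_0\Vdash|\dot b\cap B_0|=\omega$.

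The remaining step is to convert $B_0$ into a bona fide element of a standard $\mathcal{K}_{\sigma}$ extension of $\mathcal{A}$. I would first observe that for any $f\in\pi(B_0)$ the $1$-branching tree $\widehat f$ is automatically AD from $\mathcal{A}$: if $f$ were a branch of some finitely-branching $A\in\mathcal{A}$ then $\widehat f\subseteq A$, so $|A\cap B_0|\ge|\widehat f\cap B_0|=\omega$, contradicting AD, and if $\pi(A)=\emptyset$ then $f\notin\pi(A)$ forces $|A\cap\widehat f|<\omega$. Setting $D=\bigcup_{f\in\pi(B_0)}\widehat f$ and decomposing $B_0=(B_0\setminus D)\sqcup(B_0\cap D)$, the set $B_0\setminus D$ has $\pi=\emptyset$, hence is a valid element. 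If $q_0\Vdash|\dot b\cap(B_0\setminus D)|=\omega$ we finish with $\mathcal{B}=\mathcal{A}\cup\{B_0\setminus D\}$; otherwise, after strengthening to $q_2\le q_0$, we may assume $q_2\Vdash|\dot b\cap D|=\omega$ and aim to single out $f^{*}\in\pi(B_0)$ and $q\le q_2$ with $q\Vdash|\dot b\cap\widehat{f^{*}}|=\omega$, so that $\mathcal{B}=\mathcal{A}\cup\{\widehat{f^{*}}\}$ works.

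I expect the last step to be the main obstacle: pinning down a single branch rather than merely a countable spread. A natural route is to enumerate $\omega^{<\omega}$ by rank (so each $C_j$ lives in a finite subtree) and to tighten the $C_j$'s so that $\pi(B_0)$ is forced to be countable; one then enumerates $\pi(B_0)=\{f_k\mid k\in\omega\}$ and applies $\omega^{\omega}$-bounding to the name $k\mapsto|\dot b\cap\widehat{f_k}|$, combined with the lower bound $|\dot b\cap D|=\omega$, to locate $f^{*}$. If a single $f^{*}$ cannot be singled out, one enlarges $\mathcal{B}$ with countably many $\widehat{f_k}$'s together with a finitely-branching tree obtained by uniformly $\omega^{\omega}$-bounding the relevant $f_k$'s, again AD from $\mathcal{A}$ by the argument above, and exploits the countability of $\mathcal{B}$ to conclude $q_0\Vdash\dot b\notin\mathcal{B}^{\perp}$.
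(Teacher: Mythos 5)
The first half of your argument is correct: the two applications of $\omega^{\omega}$-bounding and the counting argument do produce a ground-model set $B_0\in\mathcal{A}^{\perp}$ with $q_0\Vdash``|\dot b\cap B_0|=\omega\textquotedblright$, and your observation that $\widehat f$ is almost disjoint from $\mathcal{A}$ for every $f\in\pi(B_0)$ is also right. But the paper never passes through such a $B_0$, and this intermediate object is where the trouble starts: the difficulty is not to find \emph{some} ground-model set meeting $\dot b$ infinitely, but one of the two admissible shapes (a finitely branching tree, or a set with empty $\pi$), and $B_0$ carries no information about which shape $\dot b$ will tolerate.

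The last step has a genuine gap. First, $\pi(B_0)$ can be uncountable and need not be contained in any ground-model finitely branching tree: each $C_j$ is finite, but $B_0$ can still contain infinitely many initial segments of branches with arbitrarily fast growth, so neither ``tightening the $C_j$'s'' nor ``uniformly bounding the relevant $f_k$'s'' tames $D$. Second, and more seriously, the fallback of adjoining countably many $\widehat{f_k}$'s and ``exploiting the countability of $\mathcal{B}$'' proves nothing: $q\Vdash``\dot b\notin\mathcal{B}^{\perp}\textquotedblright$ requires a \emph{single} element of $\mathcal{B}$ forced to meet $\dot b$ infinitely, and it is entirely possible that $\dot b\cap D$ is infinite while $\dot b\cap\widehat{f_k}$ is finite for every $k$. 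What is missing is the dichotomy around which the paper's proof is organized: strengthen $p$ to decide whether $\pi(\dot b)$ is empty. If not, take a name $\dot r$ with $\dot r\in\pi(\dot b)$, use bounding to capture $\dot r$ in a ground-model finitely branching tree $T$, and use bounding again to find heights past which each $T_n\cup a_n$ no longer meets $\widehat{\dot r}$, so that the trimmed tree $K=T\setminus\bigcup_n(\widetilde T_n\cup\widetilde a_n)$ is finitely branching, lies in $\mathcal{A}^{\perp}$, still has $\dot r$ as a branch, and hence meets $\dot b$ infinitely. If $\pi(\dot b)=\emptyset$, one instead analyzes the tree $\dot S$ of nodes with infinitely many extensions in $\dot b$ and uses bounding (via ground-model interval partitions) to build a set $a$ with $\pi(a)=\emptyset$, $a\in\mathcal{A}^{\perp}$, meeting $\dot b$ infinitely. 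Your decomposition $B_0=(B_0\setminus D)\cup(B_0\cap D)$ does not substitute for this case analysis, because in the second case you are left with the original problem relativized to $D$.
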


\begin{proof}
Let $\mathcal{A}=\left\{  T_{n}\mid n\in\omega\right\}  \cup\left\{  a_{n}\mid
n\in\omega\right\}  $ where $T_{n}$ is a finitely branching subtree of
$\omega^{<\omega}$ and $\pi\left(  a_{n}\right)  =\emptyset$ for every
$n\in\omega.$ We may assume that $p$ forces that $\pi(\dot{b})$ is either
empty or a singleton. We first assume there is $\dot{r}$ such that
$p\Vdash``\pi(\dot{b})=\left\{  \dot{r}\right\}  \textquotedblright.$ Since
$\mathbb{P}$ is $\omega^{\omega}$-bounding, we may find $p_{1}\leq p$ and
$T\in V$ a finitely branching well pruned subtree of $\omega^{<\omega}$ such
that $p_{1}\Vdash``\dot{r}\in\left[  T\right]  \textquotedblright.$ Once
again, since $\mathbb{P}$ is $\omega^{\omega}$-bounding we may find $p_{2}\leq
p_{1}$ and $f\in\omega^{\omega}$ such that the following holds:

\begin{enumerate}
\item $f$ is an increasing function.

\item $p_{2}\Vdash``\left(  T_{n}\cup a_{n}\right)  \cap\widehat{r}%
\subseteq\omega^{<f\left(  n\right)  }\textquotedblright.$
\end{enumerate}

\qquad\ \ \ 

For each $n\in\omega,$ define $\widetilde{T}_{n}=\left\{  s\in T_{n}\mid
f\left(  n\right)  \leq\left\vert s\right\vert \right\}  $ and define the set
$\widetilde{a}_{n}$ as $\left\{  t\mid\exists s\in a_{n}\left(  s\in
a_{n}\wedge f\left(  n\right)  \leq f\left(  n\right)  \right)  \right\}  .$
Let $K=T\setminus%
%TCIMACRO{\tbigcup \limits_{n\in\omega}}%
%BeginExpansion
{\textstyle\bigcup\limits_{n\in\omega}}
%EndExpansion
(\widetilde{T}_{n}\cup\widetilde{a}_{n}).$ It is easy to see that $K$ is a
finitely branching tree, $p_{2}\Vdash``\dot{r}\in\left[  K\right]
\textquotedblright$ and $K\in\mathcal{A}^{\perp}.$ We now simply define
$\mathcal{B}=\mathcal{A\cup}\left\{  K\right\}  .$

\qquad\ \ 

Now we consider the case where $\pi(\dot{b})$ is forced to be empty. Let
$\dot{S}$ be the tree of all $s\in\omega^{<\omega}$ such that $s$ has
infinitely many extensions in $\dot{b}.$ We will first assume there are
$p_{1}\leq p$ and $s$ such that $p_{1}$ forces that $s$ is a maximal node of
$\dot{S}.$ Since $\mathbb{P}$ is $\omega^{\omega}$-bounding, we can find a
ground model interval partition $\mathcal{P}=\left\{  P_{n}\mid n\in
\omega\right\}  $ and $p_{2}\leq p_{1}$ such that if $n\in\omega$ then $p_{2}$
forces that there is $\dot{m}_{n}\in P_{n}$ such that $(\left[  s^{\frown}%
\dot{m}_{n}\right]  \cap\dot{b})\setminus\left(  T_{0}\cup...T_{n}\cup
a_{0}\cup...\cup a_{n}\right)  \neq\emptyset.$ Given $n,m\in\omega$ we define
$K_{n,m}=\left\{  s^{\frown}i^{\frown}t\mid i\in P_{n}\wedge t\in
m^{m}\right\}  .$ Using once again that $\mathbb{P}$ is $\omega^{\omega}%
$-bounding, we may find $p_{3}\leq p_{2}$ and an increasing function
$f:\omega\longrightarrow\omega$ such that if $n\in\omega$ then $p_{3}$ forces
$(K_{n,f\left(  n\right)  }\cap\dot{b})\setminus\left(  T_{0}\cup...T_{n}\cup
a_{0}\cup...\cup a_{n}\right)  $ is non-empty for every $n\in\omega.$ We now
define $a=%
%TCIMACRO{\tbigcup \limits_{n\in\omega}}%
%BeginExpansion
{\textstyle\bigcup\limits_{n\in\omega}}
%EndExpansion
K_{n,f\left(  n\right)  }\setminus\left(  T_{0}\cup...T_{n}\cup a_{0}%
\cup...\cup a_{n}\right)  .$ It is easy to see that $\pi\left(  a\right)
=\emptyset,$ $a\in\mathcal{A}^{\perp}$ and $p_{3}$ forces that $a$ and
$\dot{b}$ have infinite intersection.

\qquad\ \ 

Now we assume that $p$ forces that $\dot{S}$ does not have maximal nodes, let
$\dot{r}$ be a name for a branch of $\dot{S}.$ First assume that $\dot{r}$ is
forced to be a branch through some element of $\mathcal{A}.$ We may assume
that $p\Vdash``\dot{r}\in\left[  T_{0}\right]  \textquotedblright.$ Since
$\mathbb{P}$ is $\omega^{\omega}$-bounding, we may find $p_{1}\leq p$ and an
increasing ground model function $f:\omega\longrightarrow\omega$ such that if
$n\in\omega$ then $p_{1}$ forces that all extentions of $\dot{r}%
\upharpoonright f\left(  n\right)  $ to $\dot{b}$ are not in $T_{0}%
\cup...T_{n}\cup a_{0}\cup...\cup a_{n}.$ Once again, we may find $p_{2}\leq
p_{1}$ and $g:\omega\longrightarrow\omega$ such that if $n\in\omega$ then
$\dot{b}$ has non empty intersection with the set $\{\dot{r}\upharpoonright
f\left(  n\right)  ^{\frown}t\mid t\in g\left(  n\right)  ^{g\left(  n\right)
}\}\setminus\left(  T_{0}\cup...T_{n}\cup a_{0}\cup...\cup a_{n}\right)  .$ We
now define $a=%
%TCIMACRO{\tbigcup \limits_{s\in\left(  T_{0}\right)  _{f\left(  n\right)  }}}%
%BeginExpansion
{\textstyle\bigcup\limits_{s\in\left(  T_{0}\right)  _{f\left(  n\right)  }}}
%EndExpansion
(\{s^{\frown}t\mid t\in g\left(  n\right)  ^{g\left(  n\right)  }%
\}\setminus\left(  T_{0}\cup...T_{n}\cup a_{0}\cup...\cup a_{n}\right)  ).$ It
is easy to see that $a$ has the desired properties.

\qquad\ \ \ \ \ \ \ 

Finally, in case that $\dot{r}$ is not forced to be a branch through some
element of $\mathcal{A},$ we find a finitely branching tree $T\in
\mathcal{A}^{\perp}$ such that $p\Vdash``\dot{r}\in\left[  T\right]
\textquotedblright$ as we did at the beginning of the proof. If $T$ has
infinite intersection with $\dot{b}$ we are done and if not then we apply the
previous case.
\end{proof}

\qquad\ \ \ \ 

With a standard bookkeeping argument we can then conclude the following:

\begin{proposition}
[$\mathsf{CH}$]If $\mathbb{P}$ is a proper $\omega^{\omega}$-bounding forcing
of size $\omega_{1},$ then there is a \textsf{MAD} family $\mathcal{A}$ that
is $\mathbb{P}$ indestructible but is not $+$-Ramsey.
\end{proposition}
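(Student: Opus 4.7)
The plan is a $\mathsf{CH}$ recursion of length $\omega_{1}$ producing an increasing chain of countable standard $\mathcal{K}_{\sigma}$ families whose union $\mathcal{A}$ is $\mathbb{P}$-indestructibly \textsf{MAD}, contains every branch of $\omega^{<\omega}$ in its generated ideal, and therefore fails to be $+$-Ramsey via the full tree $\omega^{<\omega}$.

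Under $\mathsf{CH}$ together with $|\mathbb{P}|=\omega_{1}$, the countability of $\omega^{<\omega}$ gives only $\omega_{1}$ nice $\mathbb{P}$-names for its infinite subsets; I enumerate them as $\{\dot{b}_{\alpha}\mid\alpha<\omega_{1}\}$, the conditions as $\{p_{\alpha}\mid\alpha<\omega_{1}\}$, and write $\mathcal{BR}=\{\widehat{f_{\alpha}}\mid\alpha<\omega_{1}\}$. A standard bookkeeping fixes an enumeration of triples $(\dot{b}_{\alpha(\gamma)},p_{\beta(\gamma)},\widehat{f_{\delta(\gamma)}})$ for $\gamma<\omega_{1}$ meeting every triple at least once. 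Start with a countable standard $\mathcal{K}_{\sigma}$ family $\mathcal{A}_{0}$ that for every $s\in\omega^{<\omega}$ includes the pieces of an infinite partition of $\{s^{\frown}n\mid n\in\omega\}$ into infinite antichains, so that clause 3 of the standard $\mathcal{K}_{\sigma}$ definition is witnessed once and for all.

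At a successor step $\gamma+1$, first handle the branch $\widehat{f_{\delta(\gamma)}}$: if it is already almost covered by some element of $\mathcal{A}_{\gamma}$ do nothing, and otherwise adjoin it. The adjunction preserves almost-disjointness because any $A\in\mathcal{A}_{\gamma}$ with $\pi(A)=\emptyset$ carries no infinite chain and therefore meets $\widehat{f_{\delta(\gamma)}}$ only finitely, while no finitely branching tree in $\mathcal{A}_{\gamma}$ carries $f_{\delta(\gamma)}$ as a branch (by failure of the first alternative). Next handle the pair $(\dot{b}_{\alpha(\gamma)},p_{\beta(\gamma)})$: if $p_{\beta(\gamma)}$ does not force $\dot{b}_{\alpha(\gamma)}\in\mathcal{A}_{\gamma}^{\perp}$ leave the family alone, and otherwise apply Lemma 35 to obtain $q\leq p_{\beta(\gamma)}$ and a countable standard $\mathcal{K}_{\sigma}$ family $\mathcal{B}\supseteq\mathcal{A}_{\gamma}$ with $q\Vdash\dot{b}_{\alpha(\gamma)}\notin\mathcal{B}^{\perp}$, setting $\mathcal{A}_{\gamma+1}=\mathcal{B}$. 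At countable limits take unions, which remain countable and preserve the three clauses of being a standard $\mathcal{K}_{\sigma}$ family (clause 3 is inherited upward because $\mathcal{I}(\mathcal{A}_{\gamma})^{++}\subseteq\mathcal{I}(\mathcal{A}_{\gamma'})^{++}$ whenever $\gamma\leq\gamma'$).

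Setting $\mathcal{A}=\bigcup_{\gamma<\omega_{1}}\mathcal{A}_{\gamma}$ gives an AD family on $\omega^{<\omega}$. It is $\mathbb{P}$-indestructibly \textsf{MAD} because, given any name $\dot{b}$ and any $p\in\mathbb{P}$, the stage processing the pair $(\dot{b},p)$ either already exhibits, or produces via Lemma 35, a condition $q\leq p$ forcing $\dot{b}\notin\mathcal{A}^{\perp}$, so the set of such $q$ is dense and $\Vdash\dot{b}\notin\mathcal{A}^{\perp}$. Finally, $\mathcal{A}$ is not $+$-Ramsey: the full tree $\omega^{<\omega}$ is $\mathcal{I}(\mathcal{A})^{+}$-branching by clause 3 of standard $\mathcal{K}_{\sigma}$, while every $f\in[\omega^{<\omega}]$ satisfies $im(f)=\widehat{f}\in\mathcal{I}(\mathcal{A})$ by the branch-handling step. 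The most delicate point to verify is the compatibility claim justifying the branch adjunction, namely that finitely branching trees produced by prior applications of Lemma 35 cannot carry $f_{\delta(\gamma)}$; this holds because each such tree was chosen in $\mathcal{A}_{\xi}^{\perp}$ at the stage $\xi$ it was added, and the two-step branch/pair routine at each $\gamma$ maintains this invariant throughout the recursion.
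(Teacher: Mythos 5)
Your overall architecture is exactly what the paper intends (the paper itself offers no details beyond ``standard bookkeeping''): build an increasing $\omega_{1}$-chain of countable standard $\mathcal{K}_{\sigma}$ families, adjoin each branch $\widehat{f}$ so that every branch of $\omega^{<\omega}$ lands in the ideal, use Lemma 35 at each stage to kill one (name, condition) pair, and conclude non-$+$-Ramseyness from clause 3 of the standard $\mathcal{K}_{\sigma}$ definition together with $\mathcal{I}(\mathcal{A})^{++}\subseteq\mathcal{I}(\mathcal{A})^{+}$. The branch-adjunction step is correctly justified: if $\widehat{f}$ meets a finitely branching subtree of the family infinitely often then (by downward closure) $f$ is a branch of it and $\widehat{f}$ is already covered, and sets with $\pi(A)=\emptyset$ meet $\widehat{f}$ finitely by definition; the limit stages and the final density argument are also fine as far as they go.

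There is, however, one genuine gap, and it sits precisely where the hypothesis of properness --- which you never invoke --- must be used. Your counting claim that there are ``only $\omega_{1}$ nice $\mathbb{P}$-names'' for subsets of $\omega^{<\omega}$ is false for the forcings this proposition is aimed at: a nice name is a choice of an antichain $A_{s}\subseteq\mathbb{P}$ for each $s\in\omega^{<\omega}$, and a non-ccc poset of size $\omega_{1}$ (Sacks forcing under $\mathsf{CH}$, for instance, which is the paper's motivating example) has $2^{\omega_{1}}>\omega_{1}$ antichains, hence $2^{\omega_{1}}$ nice names. Your bookkeeping therefore does not process every pair $(\dot{b},p)$, and the closing density argument collapses. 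The standard repair: given any $p$ and any name $\dot{b}$ for an infinite subset of $\omega^{<\omega}$, take a countable $M\prec H(\theta)$ containing $p,\dot{b},\mathbb{P}$ and an $(M,\mathbb{P})$-generic $q_{0}\leq p$; then $q_{0}$ forces $\dot{b}$ to equal the evaluation of the \emph{countable} name $\{(s,r)\mid r\in M\cap\mathbb{P}\wedge r\Vdash s\in\dot{b}\}$. Under $\mathsf{CH}$ there are only $\omega_{1}^{\aleph_{0}}=\omega_{1}$ countable names, so it suffices to run your bookkeeping over pairs (condition, countable name); the pair $(q_{0},\dot{b}\cap M)$ is then handled at some stage, yielding $q\leq q_{0}\leq p$ forcing $\dot{b}\notin\mathcal{A}^{\perp}$. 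With that substitution your proof is complete; without it, the argument as written only covers ccc forcings.
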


\qquad\ \ 

\begin{acknowledgement}
I would like to thank my advisor Michael Hru\v{s}\'{a}k. Not only was him who
introduced me to the topic of this paper, but he has always been a great
support for me. The author would also like to thank the generous referee for
his or her valuable suggestions and corrections. The author would also like to
thank Jonathan Cancino and Arturo Mart\'{\i}nez.
\end{acknowledgement}

\bibliographystyle{plain}
\bibliography{Ramsey}

\qquad\ \ \ 

Osvaldo Guzm\'{a}n

York University

oguzman9@yorku.ca
\end{document}